\documentclass[10pt]{amsart}


\topmargin -.25cm %
\textheight22.cm %
\oddsidemargin 0cm %
\evensidemargin 0cm %
\textwidth16.5cm %

\usepackage{amssymb}
\usepackage{amsthm}

\usepackage[figuresright]{rotating}

\usepackage{amsmath}

\newtheorem{theorem}{Theorem}[section]
\newtheorem{lemma}[theorem]{Lemma}

\newtheorem{proposition}[theorem]{Proposition}

\newtheorem{main-theorem}[theorem]{Theorem}

\theoremstyle{definition}

\newtheorem{definition}[theorem]{Definition}

\newtheorem{example}[theorem]{Example}


%


\usepackage[normalem]{ulem} 

\usepackage{dsfont}

\usepackage{empheq} 

\renewcommand{\Im}{\operatorname{Im}}

\renewcommand{\mod}{\operatorname{mod}}

\newcommand{\Ext}{\operatorname{Ext}}

\newcommand{\soc}{\operatorname{soc}}
\newcommand{\op}{\operatorname{op}}

\newcommand{\rad}{\operatorname{rad}}

\newcommand{\Ker}{\operatorname{Ker}}

\newcommand{\cA}{\mathcal{A}}
\newcommand{\cB}{\mathcal{B}}

\newcommand{\cO}{\mathcal{O}}

\newcommand{\bN}{\mathbb{N}}
\newcommand{\bP}{\mathbb{P}}

\newcommand{\bR}{\mathbb{R}}

\newcommand{\ba}{\bar{\alpha}}
\newcommand{\La}{\Lambda}
\newcommand{\vf}{\varphi}
\newcommand{\overbar}[1]{\mkern 5mu\overline{\mkern-5mu#1\mkern-5mu}\mkern 5mu}

\usepackage[matrix,arrow,curve,frame,arc]{xy}

\usepackage{pgf}
\usepackage{tikz}

\usetikzlibrary{arrows,shapes,automata,backgrounds,petri,calc}

\usepackage{enumerate}



\begin{document}


\title{Socle equivalences of weighted surface algebras}

{\def\thefootnote{}
\footnote{The first, third and fourth named authors 
were supported by the research grant
DEC-2011/02/A/ST1/00216 of the National Science Center Poland.
The fifth named author was 
supported by
JSPS KAKENHI Grant Number JP19K03417.}
}

\author[J. Bia\l kowski]{Jerzy Bia\l kowski}

\author[K. Erdmann]{Karin Erdmann}

\author[A. Hajduk]{Adam Hajduk}

\author[A. Skowro\'nski]{Andrzej Skowro\'nski}

\author[K.~Yamagata]{Kunio~Yamagata}

\address[Jerzy Bia\l kowski, Adam Hajduk, A. Skowro\'nski]{Faculty of Mathematics and Computer Science,
   Nicolaus Copernicus University,
   Chopina~12/18,
   87-100 Toru\'n,
   Poland}
\email{jb@mat.uni.torun.pl}
\email{ahajduk@mat.umk.pl}
\email{skowron@mat.uni.torun.pl}

\address[Karin Erdmann]{Mathematical Institute,
   Oxford University,
   ROQ, Oxford OX2 6GG,
   United Kingdom}
\email{erdmann@maths.ox.ac.uk}

\address[Kunio~Yamagata]{Institute of Engineering, 
    Tokyo University of Agriculture and Technology\\
    Nakacho 2-24-16, Koganei,
    Tokyo 184-8588, Japan}
\email{yamagata@cc.tuat.ac.jp}

\begin{abstract}
We describe the structure and properties of the finite-dimensional
symmetric algebras over an algebraically closed field $K$
which are socle equivalent to the general weighted surface algebras
of triangulated surfaces, 
investigated in 
\cite{ES5}.
n particular, we prove that all these algebras 
are tame periodic algebras of period~$4$.
The main results of this paper form an essential step towards
a classification of all symmetric tame periodic algebras of period 4.

\bigskip

\noindent
\textit{Keywords:}
Syzygy, Periodic algebra, Symmetric algebra,
Tame algebra, Surface algebra, Socle equivalence

\noindent
\textit{2010 MSC:}
16D50, 16E30, 16G20, 16G60

\subjclass[2010]{16D50, 16E30, 16G20, 16G60}

\end{abstract}

\maketitle

\section{Introduction and the main results}%
\label{sec:intro}

Throughout this paper, $K$ will denote a fixed algebraically closed field.
By an algebra we  mean an associative finite-dimensional $K$-algebra
with an identity. 
All algebras here will be symmetric, hence in particular self-injective.
Two self-injective algebras $A$ and $B$ are said to be
\emph{socle equivalent} if the quotient algebras
$A/\!\soc(A)$ and $B/\!\soc(B)$ are isomorphic.

Let $A$ be an algebra.
Given a (right) $A$-module $M$, its \emph{syzygy}
is defined to be 
the kernel $\Omega_A(M)$ of a
minimal
projective cover of $M$.
A module $M$  is periodic if $\Omega_A^n(M)\cong M$ for some $n\geq 1$.
An algebra $A$ is defined to be \emph{periodic}
if it is periodic viewed as a module over
the enveloping algebra $A^e = A^{\op} \otimes_K A$,
or equivalently,  as an $A$-$A$-bimodule.
It is known that if $A$ is a periodic algebra
of period $n$ then for any indecomposable
non-projective module $M$ in $\mod A$
the syzygy $\Omega_A^n(M)$ is isomorphic to $M$.

Finding or possibly classifying periodic algebras
is an important problem, for motivation we refer to the survey article \cite{ES2}
and the introductions of \cite{BES,ES3}).
Periodicity of an algebra, and its 
period, 
are invariant under derived equivalences \cite{Ric}
(see also \cite{ES2}).
Therefore to study periodic algebras we may assume
that the algebras are basic and indecomposable.

We are concerned with the classification of all periodic
tame symmetric algebras.
In \cite{Du1} Dugas proved that every representation-finite
self-injective algebra, without simple blocks, is a periodic algebra.
The representation-infinite, basic, indecomposable,
periodic algebras of polynomial growth were classified
by Bia\l kowski, Erdmann and Skowro\'nski in \cite{BES}
(see also \cite{S1,S2}).
It would be interesting to classify all indecomposable
periodic symmetric tame algebras of non-polynomial growth.
It is conjectured in \cite[Problem]{ES3} 
that every such an algebra has period $4$.

Recently, weighted surface algebras 
are introduced and studied in 
\cite{ES3}--\cite{ES6}, 
a new class of tame symmetric periodic algebras,
of period $4$.
The present paper is a sequel of this work.

We recall the general concept of weighted surface algebras and their socle 
deformations and describe the main results of this paper; for details we refer to Section 2.
Let $S$ be a surface, that is  a compact connected 
$2$-dimensional real manifold, orientable or non-orientable,
with or without boundary, and $T$ a triangulation of $S$.
We say that $(S, \vec{T})$ is a directed triangulated surface if $S$
is a surface, $T$ is a triangulation of $S$ with at least $2$ edges,
and $\vec{T}$ is an arbitrary choice of orientations of the triangles in $T$.
To such $(S, \vec{T})$ one associates \cite{ES3} a triangulation
quiver $Q=(Q(S, \vec{T}),f)$, where $f$ is the permutation of the arrows describing the orientation $\vec{T}$
of the triangles in $T$.
The weighted surface algebra 
$\Lambda(S, \vec{T}, m_{\bullet}, c_{\bullet})$
is a quotient algebra   of $KQ$, here $m_{\bullet}, c_{\bullet}$ describe multiplicities and scalar parameters.
Furthermore,   if the border $\partial S$ of $S$ is not empty, we have  the set 
$\partial(Q)$
of border vertices corresponding to the boundary edges 
of $T$, and a border function
$b_{\bullet} : \partial(Q) \to K$. 
This gives rise to the  socle deformed weighted surface algebra
$\Lambda(S,\vec{T},m_{\bullet},c_{\bullet},b_{\bullet})$.

In the original version \cite{ES3}, we had assumed that $Q$ has at least
three vertices. The general version \cite{ES5}, \cite{ES5-corr}
is also valid when $Q$ has two vertices, and it 
deals with 
algebras whose Gabriel quiver is not 2-regular. 

 Theorem 1.4 of \cite{ES3}, 
describes algebras which are socle equivalent to weighted surface
algebras, in terms of the original definition so that in particular
$Q$ has at least three vertices, proving the following:
Assume  $A$ is a basic indecomposable symmetric algebra
over $K$ which is socle equivalent but not isomorphic to a weighted surface
algebra $\La(S, \vec{T}, m_{\bullet}, c_{\bullet})$. Then the surface
has non-empty boundary, the field has characteristic $2$, and 
$A$ is isomorphic to a socle deformed weighted surface algebra
$\Lambda(S, \vec{T}, m_{\bullet}, c_{\bullet}, b_{\bullet})$.
 The algebra $A$ is
tame of non-polynomial growth, and $A$ is periodic as an algebra,
of period four.

\bigskip

In this paper we will establish the same results for the general 
version as in  \cite{ES5}, \cite{ES5-corr}. 
Here the quiver may also have only 2 vertices, and the Gabriel quiver
of the algebra will in general be a proper subquiver of the quiver
$Q$. 
So let $\Lambda(S, \vec{T}, m_{\bullet}, c_{\bullet})$ be a (general) weighted
surface algebra. We assume throughout that it is not the singular disc
algebra $D(1)$ introduced in \cite{ES5}, Example 3.1 (as this is not periodic, see
\cite{ES5}, Theorem 1.3).

The following three theorems are the main results of the paper.

\begin{main-theorem}
\label{th:main1}
Let $\Lambda = \Lambda(S,\vec{T},m_{\bullet},c_{\bullet},b_{\bullet})$
be a socle deformed weighted surface algebra over an algebraically
closed field $K$,
with non-zero border function $b_{\bullet}$,
and $|Q_0| \geqslant 2$.
Then the following hold:
\begin{enumerate}[\rm (i)]
 \item
  $\Lambda$ is a finite-dimensional symmetric algebra.
 \item
  $\Lambda$ is socle equivalent to 
  $\Lambda(S,\vec{T},m_{\bullet},c_{\bullet})$,
  and isomorphic if $K$ is of characteristic different from $2$.
 \item
  $\Lambda$ is a tame algebra.
 \item
  $\Lambda$ is a periodic algebra of period $4$.
\end{enumerate}
\end{main-theorem}

\begin{main-theorem}
\label{th:main2}
Let $A$ be a basic, indecomposable, symmetric
algebra with Grothendieck group $K_0(A)$ of rank at least $2$ over
an algebraically closed field $K$.
Assume that $A$ is socle equivalent to a weighted surface algebra 
$\Lambda(S,\vec{T},m_{\bullet},c_{\bullet})$.
\begin{enumerate}[\rm (i)]
 \item
  If the border $\partial S$ of $S$ is empty, then $A$ is isomorphic to
  $\Lambda(S,\vec{T},m_{\bullet},c_{\bullet})$.
 \item
  Otherwise, $A$ is isomorphic to
  $\Lambda(S,\vec{T},m_{\bullet},c_{\bullet},b_{\bullet})$
  for some border function $b_{\bullet}$ of $(Q(S,\vec{T}),f)$.
\end{enumerate}
\end{main-theorem}

\begin{main-theorem}
\label{th:main3}
Let $\Lambda = \Lambda(S,\vec{T},m_{\bullet},c_{\bullet},b_{\bullet})$
be a socle deformed weighted surface algebra 
with non-zero border function $b_{\bullet}$
over an algebraically closed field $K$ of characteristic $2$.
Then
\begin{enumerate}[\rm (i)]
 \item
  If $|Q_0|\geq 3$ then $\Lambda$ is not isomorphic to a
	weighted surface algebra. 
 \item
  Suppose $|Q_0|=2$, then $\Lambda$ is isomorphic to a weighted
		surface algebra $\Lambda(S, \vec{T}, m_{\bullet}, c_{\bullet})$
		if and only if $\Lambda$ is isomorphic to an algebra
  $Q(2\cB)_3^t(b)$ with $t$ even and $b \in K^*$.
\end{enumerate}
\end{main-theorem}

Here, $Q(2\cB)_3^t(b)$ is an algebra of quaternion type
with two simple modules from \cite{E}
(see Section~\ref{sec:9} for details).

\smallskip

The paper is organized as follows.
In Section~\ref{sec:2} we introduce the socle deformed weighted surface algebras
and describe their basic properties.
In Section~\ref{sec:3} we prove that these are periodic algebras of period $4$,
completing the proof of Theorem~\ref{th:main1}.
Section~\ref{sec:4} is devoted to the proof of Theorem~\ref{th:main2}.
In Sections \ref{sec:5}, \ref{sec:6}, \ref{sec:7} we provide technical reductions
results for the proof of Theorem~\ref{th:main3}.
In Sections \ref{sec:8} and \ref{sec:9} we classify  the socle equivalences
for two special classes of algebras of quaternion type with two simple modules.
In the final Section~\ref{sec:10} we complete the proof of Theorem~\ref{th:main3}.
For general background on the relevant representation theory
we refer to the books \cite{ASS,E,SS,SY}.

\section{Socle deformed weighted surface algebras}
\label{sec:2}

Recall that a quiver is a quadruple $Q = (Q_0,Q_1,s,t)$ where
$Q_0$ is a finite set of vertices, $Q_1$ is a finite set of arrows,
and where $s,t$ are maps $Q_1 \to Q_0$ associating to each arrow
$\alpha \in Q_1$ its source $s(\alpha)$ and target $t(\alpha)$.
We say that $\alpha$ starts at $s(\alpha)$ and ends at $t(\alpha)$.
We assume throughout that any quiver is connected.

Let $Q$ be a quiver.
We denote by $K Q$ the path algebra of $Q$ over $K$.
The underlying space has 
basis 
consisting of
the set of all paths (of length $\geqslant 0$) in $Q$.
Let $R_Q$ be the ideal of $K Q$
generated by all paths in $Q$ of length $\geqslant 1$.
For each vertex $i \in Q_0$,
let $e_i$ be the path in $Q$ of length $0$ at $i$,
then the $e_i$ are pairwise orthogonal primitive idempotents,
and their sum is the identity of $K Q$.
We will consider algebras of the form $A = K Q/I$
where $I$ is an ideal in $K Q$ which contains $R_Q^m$ for some $m \geqslant 2$,
so the algebra $A$ is finite-dimensional and basic.
The Gabriel quiver $Q_A$ of $A$ is then the full subquiver of $Q$
obtained from $Q$ by removing all arrows $\alpha$ with
$\alpha + I \in R_Q^2 + I$.

A quiver $Q$ is $2$-regular if for each vertex $i \in Q_0$ there
are precisely two arrows starting at $i$ and two arrows
ending at $i$.
Such a quiver has an involution $(-) : Q_1 \to Q_1$
such that for each arrow $\alpha \in Q_1$, the arrow $\bar{\alpha}$
is the arrow $\neq\alpha$ with $s(\bar{\alpha}) = s(\alpha)$.

A triangulation quiver is a pair $(Q,f)$ where $Q$ is a (finite)
connected $2$-regular quiver, with at least two vertices,
and where $f$ is a fixed permutation on the set of arrows $Q_1$
such that $t(\alpha) = s(f(\alpha))$ for any arrow $\alpha \in Q_1$,
and $f^3$ is the identity.
The permutation $f$ uniquely determines a permutation $g$
on $Q_1$, defined by $g(\alpha) = \overbar{f(\alpha)}$ 
for any arrow $\alpha \in Q_1$.

It was proved in \cite{ES3} (see also \cite{ES8}) that the triangulation
quivers are precisely the  quivers  $(Q(S, \vec{T}),f)$
constructed from a triangulation of a compact real surface $S$,
with or without boundary, and where $\vec{T}$ is an arbitrary
choice of orientation of triangles in $T$.

Let $(Q,f)$ be a triangulation quiver.
For each arrow $\alpha \in Q_1$,
we denote by $\cO(\alpha)$ the $g$-orbit of $\alpha$ in $Q_1$,
and set $n_{\alpha} = n_{\cO(\alpha)} = |\cO(\alpha)|$.
We denote by $\cO(g)$ the set of all $g$-orbits in $Q_1$.
Following \cite{ES3}, a function
\[ 
  m_{\bullet} : \cO(g) \to \bN^* = \bN \setminus \{ 0 \}
\]
is called a \emph{weight function} of $(Q,f)$, and a function
\[ 
  c_{\bullet} : \cO(g) \to K^* = K \setminus \{ 0 \}
\]
is called a \emph{parameter function} of $(Q,f)$.
We write briefly $m_{\alpha} = m_{\cO(\alpha)}$ and
$c_{\alpha} = c_{\cO(\alpha)}$ for $\alpha \in Q_1$.
We say that an arrow $\alpha$ of $Q$ is \emph{virtual}
if $m_{\alpha} n_{\alpha} = 2$
(see \cite[Definition~2.6]{ES5}).
Following \cite{ES5}, we assume that any weight functions
$m_{\bullet}$ of $(Q,f)$ satisfies the following conditions:
\begin{enumerate}[\rm \ \ (1)]
 \item
  $m_{\alpha} n_{\alpha} \geqslant 2$ for all arrows $\alpha \in Q_1$,
 \item
  $m_{\alpha} n_{\alpha} \geqslant 3$ for all arrows $\alpha \in Q_1$
  such that $\bar{\alpha}$ is virtual and $\bar{\alpha}$ is not a loop,
 \item
  $m_{\alpha} n_{\alpha} \geqslant 4$ for all arrows $\alpha \in Q_1$
  such that $\bar{\alpha}$ is virtual and $\bar{\alpha}$ is a loop.
\end{enumerate}
Condition (1) is a general assumption, and (2) and (3) are 
needed to eliminate some small algebras (see \cite[Section~3]{ES5}).
For each arrow $\alpha \in Q_1$, we fix
\begin{align*} 
 A_{\alpha}:=  \alpha g(\alpha)\ldots g^{m_{\alpha}n_{\alpha}-2}(\alpha), &
   \mbox{ the path  along the $g$-cycle of $\alpha$ 
                     of length $m_{\alpha}n_{\alpha}-1$,}
\\
B_{\alpha}:=  \alpha g(\alpha)\ldots g^{m_{\alpha}n_{\alpha}-1}(\alpha), &
   \mbox{ the path  along the $g$-cycle of $\alpha$ 
                     of length $m_{\alpha}n_{\alpha}$.} 
\end{align*}

A loop $\alpha$ in $Q_1$ with $f(\alpha) = \alpha$ is said to be
a \emph{border loop} and the vertex $s(\alpha)$ a border vertex,
of $(Q,f)$.
We denote by $\partial(Q,f)$ the set of all border vertices of $(Q,f)$,
and call it the \emph{border} of $(Q,f)$.
Assume $\partial(Q,f)$ is not empty. A function
\[ 
  b_{\bullet} : \partial(Q,f) \to K 
\]
is said to be a \emph{border function} of $(Q,f)$.

The following is the  corrected version of Definition 2.8 of 
\cite{ES5}. The details for the correction are given in \cite{ES5-corr}.

\begin{definition}
\label{def:socdef}
Let $(Q,f)$ be a triangulation quiver with $\partial(Q,f)$ not empty,
and
$m_{\bullet}$, $c_{\bullet}$, $b_{\bullet}$
be weight, parameter, border functions
of $(Q,f)$.
Then the 
\emph{socle deformed weighted triangulation algebra}
$\Lambda(Q,f,m_{\bullet},c_{\bullet},b_{\bullet}) = K Q/I$
is defined by the ideal
$I = I(Q,f,m_{\bullet},c_{\bullet},b_{\bullet})$
of the path algebra $K Q$ generated by:
\begin{enumerate}[\rm \ \ (1)]
 \item
	$\alpha f(\alpha) - c_{\bar{\alpha}}A_{\bar{\alpha}}$ 
	for all arrows $\alpha$ of $Q$,
	which are not border loops,
 \item
	$\alpha^2 - c_{\bar{\alpha}}A_{\bar{\alpha}} - b_{s(\alpha)}B_{\bar{\alpha}}$ 
	for all border loops $\alpha$ of $Q$,
 \item
	$\alpha f(\alpha) g(f(\alpha))$ \ for all arrows $\alpha$ of $Q$, except if 
	$f^2(\alpha)$ is virtual, or $f(\ba)$ is virtual and $m_{\ba}=1, n_{\ba}=3$,
 \item
	$\alpha g(\alpha)f(g(\alpha))$ for all arrows $\alpha$ of $Q$
	except if $f(\alpha)$ is  virtual, or $f^2(\alpha)$ is virtual and $m_{f(\alpha)}=1, n_{f(\alpha)}=3$.
\end{enumerate}
If $b_{\bullet}$ is a zero border function 
($b_{i} = 0$ for all $i \in\partial(Q,f)$ ),
then
$\Lambda(Q,f,m_{\bullet},c_{\bullet},b_{\bullet}) 
 = \Lambda(Q,f,m_{\bullet},c_{\bullet})$
is a weighted triangulation algebra as defined in \cite[Definition~2.8]{ES5}.
Moreover, if $(Q,f) = (Q(S,\vec{T}),f)$
for a directed triangulated surface
$(S,\vec{T})$ with non-empty boundary $\partial S$, then
$\Lambda(S,\vec{T},m_{\bullet},c_{\bullet},b_{\bullet})
 = \Lambda(Q(S,\vec{T}),f,m_{\bullet},c_{\bullet},b_{\bullet})$
is said to be a 
\emph{socle deformed weighted surface algebra}.
\end{definition}

As mentioned, any triangulation quiver $(Q, f)$ comes from a directed triangulated surface. We will use  the name
'weighted surface algebra' in general, but most of the time work with $(Q, f)$.

\begin{example}
\label{ex:2.2}
Let $T$ be the triangulation
\[
\begin{tikzpicture}[scale=1,auto]
\coordinate (o) at (0,0);
\coordinate (c) at (0,-1);
\coordinate (a) at (0,1);
\coordinate (b) at (1,0);
\coordinate (d) at (-1,0);
\draw (a) to node {$1$} (o);
\draw (o) to node {$2$} (c);
\draw (b) arc (0:360:1) node [right] {$4$};
\draw (d) node [left] {$3$};
\node (d) at (0,-1) {$\bullet$};
\node (c) at (0,0) {$\bullet$};
\node (b) at (0,1) {$\bullet$};
\end{tikzpicture}
\]
of the unit disc $D = D^2$ in $\bR^2$
by two triangles
and $\vec{T}$ the orientation
$(1\ 2\ 3)$, 
$(1\ 4\ 2)$
of triangles in $T$.
Then the  triangulation quiver
$(Q,f) = (Q(D,\vec{T}),f)$ is the quiver
\[
  \xymatrix@R=1pc{
   & 1 \ar[rd]^{\beta} \ar@<-.5ex>[dd]_{\xi}  \\
   3   \ar@(ld,ul)^{\varrho}[]  \ar[ru]^{\alpha} 
   && 4   \ar@(ru,dr)^{\gamma}[]  \ar[ld]^{\nu}   \\
   & 2 \ar[lu]^{\delta}  \ar@<-.5ex>[uu]_{\eta}
  } 
\]
with $f$-orbits
$(\alpha\ \xi\ \delta)$,
$(\beta\ \nu\ \eta)$,
$(\varrho)$,
$(\gamma)$.
Then $g$ has two 
orbits, 
$\cO(\alpha) = (\alpha\ \beta\ \gamma\ \nu\ \delta\ \varrho)$ 
and
$\cO(\xi) = (\xi\ \eta)$.
Observe that $\varrho$ and $\gamma$ are the border loops
of $(Q,f)$, corresponding to the border edges of $D$, and hence
$\partial(Q,f) = \{3,4\}$.
Let
$m_{\bullet} : \cO(g) \to \bN^*$
be the weight function
with 
$m_{\cO(\alpha)}  = 3$
and
$m_{\cO(\xi)}  = 1$,
$c_{\bullet} : \cO(g) \to K^*$
the parameter function
with 
$c_{\cO(\alpha)}  = c$
and
$c_{\cO(\xi)}  = 1$,
and
$b_{\bullet} : \partial(Q,f) \to K$
the border function 
with 
$b_3=1$ and $b_4=0$.
Observe that $\xi$ and $\eta$ are virtual arrows.
Then  the associated algebra
$\Lambda=\Lambda(Q,f,m_{\bullet},c_{\bullet},b_{\bullet})$
is given by the quiver $Q$ and the relations:
\begin{align*}
\alpha \xi 
&= c (\varrho \alpha \beta \gamma \nu \delta)^2 \varrho \alpha \beta \gamma \nu ,
\!\!\!\!  
&
\nu \eta 
&= c (\gamma \nu \delta \varrho \alpha \beta)^2 \gamma \nu \delta \varrho \alpha ,
\!\! 
&
\xi \delta
&= c (\beta \gamma \nu \delta \varrho \alpha)^2 \beta \gamma \nu \delta \varrho ,
\!\! 
&
\eta \beta
&= c (\delta \varrho \alpha \beta \gamma \nu)^2 \delta \varrho \alpha \beta \gamma ,
\\
\delta \alpha &=\eta,
\qquad \beta \nu =\xi,
&
\varrho^2 
&= c (\alpha \beta \gamma \nu \delta \varrho)^2 \alpha \beta \gamma \nu \delta
+(\alpha \beta \gamma \nu \delta \varrho)^3 ,
\!\!\!\!\!\!\!\!\!\!\!\!\!\!\!\!\!\!\!\!\!\!\!\!\!\!\!\!\!\!\!\!\!\!\!\!&&
&
\gamma^2
&= c (\nu \delta \varrho \alpha \beta \gamma)^2 \nu \delta \varrho \alpha \beta ,
\\
\alpha  \xi \eta &= 0 ,
\qquad \xi \delta \varrho = 0 , 
&
\nu \eta \xi &= 0 ,
\qquad \eta \beta \gamma = 0 ,
&
\varrho^2 \alpha &= 0 ,
&
\!\!   \gamma^2 \nu &= 0 ,
\\
\xi \eta \beta &= 0 ,
\qquad \varrho \alpha \xi = 0 ,
&
\eta \xi \delta &= 0 ,
\qquad \gamma \nu \eta = 0 ,
&
\delta \varrho^2 &= 0 ,
&
\!\!  \beta \gamma^2 &= 0 .
\
\end{align*}
We note that the Gabriel quiver $Q_{\Lambda}$ of $\Lambda$
is of the form
\[ 
 \vcenter{
  \xymatrix@R=1pc{
   & 1 \ar[rd]^{\beta} \\
   3   \ar@(ld,ul)^{\varrho}[]  \ar[ru]^{\alpha} 
   && 4   \ar@(ru,dr)^{\gamma}[]  \ar[ld]^{\nu}   \\
   & 2 \ar[lu]^{\delta} 
  } 
 }
 .
\]
Then $\Lambda$ is also given by $Q_{\Lambda}$ and the relations
obtained from the above relations by replacing $\eta = \delta \alpha$
and $\xi = \beta \nu$.
It follows from the proposition below that $\Lambda$ is a symmetric
algebra of dimension 
$\dim_K \Lambda = m_{\cO(\alpha)} n_{\cO(\alpha)}^2
  + m_{\cO(\xi)} n_{\cO(\xi)}^2 = 3 \cdot 6^2 + 1 \cdot 2^2 = 112$.
\end{example}

\begin{proposition}
\label{prop:2.3}
Let $\Lambda = \Lambda(Q,f,m_{\bullet},c_{\bullet},b_{\bullet})$
be a socle deformed weighted surface algebra.
Then the following hold:
\begin{enumerate}[\rm (i)]
 \item
  $\Lambda$ is a finite-dimensional algebra
  with $\dim_K \Lambda = \sum_{\cO \in \cO(g)} m_{\cO} n_{\cO}^2$.
 \item
  $\Lambda$ is socle equivalent to 
  $\Lambda(Q,f,m_{\bullet},c_{\bullet},0)$.
 \item
  $\Lambda$ is a symmetric algebra.
 \item
  $\Lambda$ is a tame algebra.
 \item
  If $K$ is of characteristic different from $2$, then
  $\Lambda$ is isomorphic to $\Lambda(Q,f,m_{\bullet},c_{\bullet},0)$.
\end{enumerate}
\end{proposition}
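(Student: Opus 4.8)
The plan is to prove the five assertions of Proposition~\ref{prop:2.3} largely by reducing them to the corresponding facts for the (undeformed) weighted surface algebra $\Lambda(Q,f,m_{\bullet},c_{\bullet},0)$, which are already available from \cite{ES5} and \cite{ES5-corr}. The key structural observation is that the defining relations of $\Lambda$ and of $\Lambda_0 := \Lambda(Q,f,m_{\bullet},c_{\bullet},0)$ differ only in the border-loop relations, where the term $b_{s(\alpha)}B_{\bar\alpha}$ is added; and $B_{\bar\alpha}$ lies in the socle of $\Lambda_0$ (it is a maximal nonzero path along a $g$-cycle). So first I would set up a $K$-basis of $\Lambda$ adapted to the $g$-orbits: for each $g$-orbit $\cO$ of length $n_\cO$ with weight $m_\cO$, the paths along the $g$-cycle of length $0,1,\dots,m_\cO n_\cO$ starting at each arrow in $\cO$, with the longest ones $B_\alpha$ identified up to the parameters $c_\alpha$. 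Counting these gives $\dim_K\Lambda = \sum_{\cO} m_\cO n_\cO^2$, establishing (i); the verification that these paths indeed span and are linearly independent modulo $I$ is the same bookkeeping as in the undeformed case, the extra $b$-term only affecting relations among top-socle elements and not changing the count.

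For (ii), I would argue that $\Lambda/\soc(\Lambda) \cong \Lambda_0/\soc(\Lambda_0)$. Concretely, both socles are spanned by the elements $c_\alpha B_\alpha$ (equivalently the maximal paths), and the deformation relation $\alpha^2 - c_{\bar\alpha}A_{\bar\alpha} - b_{s(\alpha)}B_{\bar\alpha}$ reduces modulo the socle to $\alpha^2 - c_{\bar\alpha}A_{\bar\alpha}$, the corresponding relation for $\Lambda_0$; all the other relations (the monomial relations (3),(4) and the commutativity-type relations (1)) are literally the same. Hence the identity on the quiver $Q$ induces an isomorphism $\Lambda/\soc(\Lambda)\to\Lambda_0/\soc(\Lambda_0)$. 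Part (iii), symmetry, I would get from (i) together with a symmetrizing bilinear form: define $\varphi:\Lambda\to K$ by sending a path to the coefficient of the appropriate maximal socle path in its expansion (the "longest path" functional), and check $\varphi(xy)=\varphi(yx)$ on basis elements and that $\ker\varphi$ contains no nonzero right ideal; this is the standard argument for weighted triangulation algebras, and the $b$-term does not interfere since it only adds socle elements. Alternatively, one can deduce symmetry of $\Lambda$ from symmetry of $\Lambda_0$ plus the fact that a socle-equivalent deformation of a symmetric algebra by a symmetric socle cocycle is again symmetric.

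Part (iv), tameness, follows immediately from (ii): $\Lambda$ and $\Lambda_0$ have isomorphic quotients by their socles, hence the same representation type up to the (finitely many, one-dimensional) modules supported at the socle, so tameness of $\Lambda_0$ (proved in \cite{ES5}) transfers to $\Lambda$. Finally, for (v), when $\charact K \neq 2$, I would produce an explicit algebra isomorphism $\Lambda \to \Lambda_0$ by a change of arrows: replace each border loop $\alpha$ at a border vertex $i$ by $\alpha' = \alpha + \lambda_i B_{\bar\alpha}$ (or a suitable element of $e_i\Lambda e_i$ built from the $g$-cycle through $\bar\alpha$) and leave the non-border arrows fixed; then $(\alpha')^2 = \alpha^2 + 2\lambda_i \alpha B_{\bar\alpha} + \lambda_i^2 B_{\bar\alpha}^2 = \alpha^2$ modulo higher terms since $B_{\bar\alpha}$ is in the socle, so one needs $(\alpha')^2 = c_{\bar\alpha} A_{\bar\alpha}$, i.e.\ $\alpha^2 + (\text{correction}) = c_{\bar\alpha}A_{\bar\alpha}$; solving for $\lambda_i$ uses the relation $\alpha^2 = c_{\bar\alpha}A_{\bar\alpha} + b_i B_{\bar\alpha}$ and the identity $\alpha A_{\bar\alpha}$ or a similar product equals (a scalar times) $B_{\bar\alpha}$, which forces a factor of $2$ in the denominator — hence the characteristic hypothesis. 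I expect this last step, pinning down exactly which correction term works and verifying it is compatible with all the monomial relations (3),(4), to be the main technical obstacle; the computation of the relevant products $\alpha B_{\bar\alpha}$, $B_{\bar\alpha}\alpha$ in $e_i\Lambda e_i$ has to be done carefully using the structure of the (commutative, local) algebra $e_i\Lambda e_i$, which by (i) is $K[\alpha]/(\alpha^{2m_{\bar\alpha}n_{\bar\alpha}})$-like of the right dimension. The remaining relations are monomial of length $\geq 3$ and automatically preserved because the correction lies deep in the radical.
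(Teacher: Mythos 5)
Your proposal follows essentially the same route as the paper, which simply observes that parts (i)--(iii) and (v) are proved exactly as in Proposition 8.1 and 8.2 of \cite{ES3} (and Proposition 4.13 of \cite{ES5}), and that (iv) reduces via (ii) to the undeformed case: the deformation only adds the socle term $b_{s(\alpha)}B_{\bar{\alpha}}$ to the border-loop relations, so the basis count, the quotient by the socle, the symmetrizing ``top coefficient'' form, and the transfer of tameness all go through unchanged. Your reduction for (iv) is the right one (and is exactly what the paper does), though the clean justification is that for a self-injective algebra every indecomposable non-projective module is annihilated by the socle, so socle equivalent self-injective algebras have the same non-projective indecomposables.

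One concrete slip in (v): the substitution $\alpha' = \alpha + \lambda_i B_{\bar{\alpha}}$ cannot work, because $B_{\bar{\alpha}}$ lies in the socle, so $\alpha B_{\bar{\alpha}} = B_{\bar{\alpha}}\alpha = 0$ and $(\alpha')^2 = \alpha^2$ exactly --- the correction does nothing. The correct change of generators (as in Proposition 8.2 of \cite{ES3}) takes the correction in the \emph{second} socle, namely $\alpha' = \alpha - \tfrac{1}{2} b_i c_{\bar{\alpha}}^{-1}\, c_{\bar\alpha} A_{\bar{\alpha}}$-type, i.e.\ $\alpha' = \alpha + \lambda A_{\bar{\alpha}}$ with $2\lambda = -b_i$, using precisely the identities $\alpha A_{\bar{\alpha}} = B_{\alpha} = B_{\bar{\alpha}} = A_{\bar{\alpha}}\alpha$ that you mention; the cross terms then contribute $2\lambda B_{\bar{\alpha}}$, which is where characteristic $\neq 2$ enters. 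Your parenthetical ``or a suitable element of $e_i\Lambda e_i$ built from the $g$-cycle'' and your appeal to $\alpha A_{\bar{\alpha}} = B_{\bar{\alpha}}$ show you have the right mechanism in mind, but the named candidate must be replaced by $A_{\bar{\alpha}}$ for the argument to close. The remaining relations are indeed unaffected, since substituting $A_{\bar{\alpha}}$ for $\alpha$ inside any path of length $\geqslant 2$ produces paths of length $> m_{\alpha}n_{\alpha}$, which vanish.
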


\begin{proof} Parts (i), (ii) and (iii) are the same as parts (i), (ii) and (iv) of Proposition 8.1 of \cite{ES3}, and also Proposition 4.13 of \cite{ES5}.
By part (ii), to prove tameness, it suffices to deal with the case when $b_{\bullet}$ is zero. In this case, tameness
is proved in \cite{ES5} (which is not affected by the correction in \cite{ES5-corr}). 
The proof of part (v) is the same as Proposition 8.2 of \cite{ES3}. In \cite{ES3} it is assumed that $|Q_0|\geq 3$ but same proofs work for the quiver with two vertices.
\end{proof}


\section{Periodicity of socle deformed weighted surface algebras}
\label{sec:3}

Throughout this section, $\La$ is a socle deformed weighted surface algebra, $\La = \La(Q, f, m_{\bullet}, c_{\bullet}, b_{\bullet})$. We assume that
the boundary $\partial(Q, f)$ is non-empty. 
Furthermore, we may assume that when $\beta$ is a virtual arrow then $c_{\beta}=1$ (see \cite{ES5}, the comments following Definition 2.8). 

The general construction of
the first three terms of a minimal bimodule resolution for
weighted surface algebras  is explained in detail in \cite{ES3} and again in \cite{ES5} and we will not repeat it.
In order to identify the  terms, by Happel's result \cite{Ha}, we must determine dimensions of
the spaces $\Ext^n_{\La}(S_i, S_j)$ for all simple modules $S_i, S_j$ of $\La$. 
We will show that every simple module has  $\Omega$-period four,
most of  this can be taken  from previous 
proofs.

\begin{proposition}
\label{prop:3.1}
Let $i$ be a  vertex of $Q$ where the arrows  $\alpha$, $\bar{\alpha}$ starting at $i$ are both not virtual.
Then there is an exact sequence in $\mod \Lambda$
\[
  0 \rightarrow
  S_i \rightarrow
  P_i \xrightarrow{\pi_3}
  P_{t(f(\alpha))} \oplus P_{t(f(\bar{\alpha}))} \xrightarrow{\pi_2}
  P_{t(\alpha)} \oplus P_{t(\bar{\alpha})} \xrightarrow{\pi_1}
  P_i \rightarrow
  S_i \rightarrow
  0,
\]
which give rise to a minimal projective resolution of $S_i$ in $\mod \Lambda$.
In particular, $S_i$ is  periodic of period $4$.
\end{proposition}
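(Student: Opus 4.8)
The plan is to construct the four maps $\pi_1,\pi_2,\pi_3$ explicitly and then verify exactness at each term, using the defining relations of $\Lambda$ from Definition~\ref{def:socdef}. First I would write down $\pi_1\colon P_{t(\alpha)}\oplus P_{t(\bar\alpha)}\to P_i$ as right multiplication by the column $(\alpha,\bar\alpha)^{\mathrm t}$ (so $P_i = e_i\Lambda$ surjects onto $S_i$ with kernel $\rad P_i$, and $\alpha,\bar\alpha$ generate $\rad P_i$ since both arrows at $i$ are non-virtual, hence actually present in the Gabriel quiver). The key point for $\pi_2$ is to identify the relations among $\alpha$ and $\bar\alpha$ that generate $\Omega^2(S_i)$: these are the two ``commutativity'' relations of type (1)/(2), namely $\alpha f(\alpha) = c_{\bar\alpha}A_{\bar\alpha}$ and $\bar\alpha f(\bar\alpha) = c_\alpha A_\alpha$ (with the $B_{\bar\alpha}$-correction on a border loop), and the two zero-relations of type (3) that start with $f(\alpha)$ and $f(\bar\alpha)$. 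This gives $\pi_2\colon P_{t(f(\alpha))}\oplus P_{t(f(\bar\alpha))}\to P_{t(\alpha)}\oplus P_{t(\bar\alpha)}$ as a $2\times 2$ matrix whose entries are the appropriate paths $f(\alpha)$, $f(\bar\alpha)$, and the tails of the $A$-paths; then $\pi_3\colon P_i\to P_{t(f(\alpha))}\oplus P_{t(f(\bar\alpha))}$ is right multiplication by a row $(g(f(\alpha)),\, g(f(\bar\alpha)))$ or similar, encoding the length-three zero relations, and finally the socle embedding $S_i\hookrightarrow P_i$ picks out $\soc(e_i\Lambda)$.

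The bulk of the work is the exactness verification, which I would organize as follows. One shows $\operatorname{im}\pi_{k+1}=\ker\pi_k$ for $k=1,2,3$ and injectivity of the leftmost map by a dimension count: using Proposition~\ref{prop:2.3}(i) and the explicit basis of $\Lambda$ along the $g$-cycles, compute $\dim_K e_j\Lambda$ for the relevant vertices $j$ and check that the alternating sum of dimensions in the (would-be) resolution is zero, then check that the composites $\pi_k\pi_{k+1}$ vanish (a direct calculation with the relations, including the border-loop case where the extra term $b_{s(\alpha)}B_{\bar\alpha}$ lies in the socle and so is killed one step further along). Since $\Lambda$ is symmetric (Proposition~\ref{prop:2.3}(iii)), $\Omega^4 S_i\cong S_i$ will follow once we know the sequence is a minimal projective resolution: minimality is automatic because every entry of every matrix $\pi_k$ is in $\rad\Lambda$, and the resolution being $4$-periodic is read off from the fact that $\pi_1$ and the dual of $\pi_1$ (via the symmetrizing form) play symmetric roles — concretely, the sequence is self-dual under $\Hom_\Lambda(-,\Lambda)$, so $\Omega^4 S_i\cong S_i$.

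The honest shortcut, which I expect is what the authors do, is to \emph{reduce to the already-known case}: for the undeformed algebra $\Lambda(Q,f,m_\bullet,c_\bullet,0)$ this exact sequence is established in \cite{ES3} (and \cite{ES5}), and the deformation only changes the relation on border loops by adding $b_{s(\alpha)}B_{\bar\alpha}$, a term landing in the socle. So I would first recall the undeformed resolution verbatim, then argue that the \emph{same} matrices $\pi_1,\pi_2,\pi_3$ (possibly with the single border-loop entry corrected by adding the corresponding $B$-path) still satisfy $\pi_k\pi_{k+1}=0$ and still have the right images, because modulo $\soc\Lambda$ nothing has changed and the socle is one-dimensional at each vertex. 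The main obstacle is precisely this border-loop bookkeeping: one must check that when $i$ is a border vertex with border loop among $\{\alpha,\bar\alpha\}$ — wait, here by hypothesis neither $\alpha$ nor $\bar\alpha$ starting at $i$ is virtual, but $i$ itself could still be such that a neighboring vertex carries a border loop, so the $B$-corrections propagate into the entries of $\pi_2$ and $\pi_3$ — and verify that these corrections are consistent (the two routes around each square still agree, now as an identity in $\Lambda$ rather than in $\Lambda/\soc$). This is a finite but genuinely case-heavy check; everything else is formal.
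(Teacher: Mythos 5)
Your overall shape is right, and in fact the paper does exactly what you call the ``honest shortcut'' for most cases: when $i$ is not a border vertex it cites Proposition 5.4 of \cite{ES5}, and when one of $\alpha,\bar\alpha$ is a border loop and $|Q_0|\geq 3$ it invokes Proposition 9.1 of \cite{ES3} verbatim. The genuinely new content is the two-vertex case with a border loop, and there the paper does the explicit computation you sketch: $\pi_1(x,y)=\alpha x+\beta y$, two generators $\vf=(\alpha,-cA_\beta'-bA_\gamma)$ and $\psi=(-cA_\alpha',\sigma)$ of $\Omega^2(S_1)$ coming from the two commutativity relations (note: only those two --- the length-three zero relations of type (3)/(4) are not additional generators of $\Omega^2(S_i)$, consistent with the fact that $\bP$ in degree $2$ has only the two summands $P_{t(f(\alpha))}\oplus P_{t(f(\bar\alpha))}$), followed by dimension counts for $\vf\La$, $\psi\La$ and their intersection, and an identification of $\Ker\pi_2$ with $\Omega^{-1}(S_1)$ via an explicit element $\Theta=(\alpha-b\lambda^{-1}\alpha^2,\gamma)$.

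There are two genuine gaps. First, your justification that the deformed matrices ``still have the right images because modulo $\soc\La$ nothing has changed'' is not a valid argument: syzygies are computed in $\La$, not in $\La/\soc\La$, and socle-equivalent algebras need not have isomorphic syzygies (indeed the whole point of Sections 5--10 is that the deformed and undeformed algebras are generally \emph{not} isomorphic). The exactness of the sequence for the deformed relations must be verified directly, which is what the dimension counts with the corrected generators $\vf,\psi$ accomplish; you cannot transport it from the undeformed case. Second, you miss the one place where the uniform dimension count breaks down: in the two-vertex case with $m_\alpha=1$ and $m_\sigma=3$, the independence of $\psi\sigma$ and $\vf\beta$ modulo $\vf\alpha\La$ requires $\lambda\neq 1$, i.e.\ the exclusion of the singular disc algebra $D(1)$ --- and this exclusion is essential, since $D(1)$ is not periodic and the proposition is false for it. A proof that never uses the standing hypothesis ``$\La$ is not $D(1)$'' cannot be complete. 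Minor further points: the border-loop correction term $b_{s(\alpha)}B_{\bar\alpha}$ arises when $\alpha$ itself is a border loop at $i$ (border loops are never virtual), not only via neighbouring vertices as your parenthetical suggests; and your appeal to self-duality under $\Hom_\La(-,\La)$ for the second half of the sequence is a legitimate alternative to the paper's direct identification of $\Ker\pi_2$ with $\Omega^{-1}(S_i)$, but it too needs the dimension bookkeeping in the deformed algebra.
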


\begin{proof} If $i$ is not a border vertex then this is the  same as the proof of Proposition 5.4 of \cite{ES5}, which  refers to 7.1 \cite{ES3}. 
Suppose 
 $\alpha$ (say) is a border loop.  When $|Q_0|\geq 3$ (which is assumed in 
 \cite{ES3}, see Section 4) then the proof of Proposition 9.1 of \cite{ES3} can be used verbatim.
This leaves the case  when $Q$ has two vertices, and then $Q$ is of the form
\[
  \xymatrix{
 1
	\ar@<.5ex>[r]^{\beta}
    \ar@(dl,ul)^{\alpha}[]
& 2
    \ar@<.5ex>[l]^{\gamma}
\ar@(dr, ur)_{\sigma}  }
 .
\]

Let $\alpha$ be the  border loop. That is, $f=(\alpha)(\beta \ \sigma \ \gamma)$ and $g = (\alpha \ \beta \ \gamma)(\sigma)$. 
We have $m_{\sigma}\geq 3$ and $m_{\alpha}\geq 1$, or if $m_{\sigma}=2$ then we must have $m_{\alpha}\geq 2$  (by the assumption, see Example 3.1 in 
\cite{ES5}). 

 Let $m=m_{\alpha}$ and $c=c_{\alpha}$, $b=b_i$. The relations at vertex $1$ are
$$\alpha^2 = c A_{\beta} + bB_{\beta}, \ \ \beta\sigma = c A_{\alpha}.
$$
We take $\Omega(S_1) = \alpha\La + \beta\La$, and we take $\pi_1(x, y)= \alpha x + \beta y$, and we identify the kernel of $\pi_1$ with
$\Omega^2(S_1)$. This has dimension
equal to $\dim P_2 + 1 = 3m_{\alpha} + m_{\sigma} + 1$.
The above relations give two elements in the kernel of $\pi_1$, namely
$$\vf:= (\alpha, -cA_{\beta}' - bA_{\gamma}), \ \ 
\psi:=  (-cA_{\alpha}', \sigma).$$
Here $A_{\beta}'$ is the monomial with $\beta A_{\beta}'=A_{\beta}$, similarly we define $A_{\alpha}'$. 
Hence $\vf\La  + \psi\La \subseteq \Omega^2(S_1)$, and we will show that both modules
have the same dimension. 

(1)\, By projecting onto the first coordinate we get an exact sequence 
$$
 0\to \Ker (p_1) \to \vf\La \stackrel{p_1}\longrightarrow \alpha\La \to 0 .
$$
The kernel of $p_1$ is given by 
$\{ (0, (-cA_{\beta}' - bA_{\gamma})x) \ \mid \alpha x=0\}$.
By the lemma below,  we have $x=\alpha\beta x'$ for $x'\in \La$, and we find that $\Ker (p_1)$ is 1-dimensional, spanned by $(0, B_{\gamma})$. Hence
$\dim \vf\La = 3m_{\alpha}+2.$

(2)\, We have similarly an exact sequence, given by projecting onto the second coordinate,
$$0\to \Ker (p_2) \to \psi\La \stackrel{p_2}\longrightarrow \sigma\La\to 0$$
and we find, using the second part of the  lemma below, that the kernel of $p_2$ is 1-dimensional,
spanned by $(B_{\beta}, 0)$. This shows that
$\dim \psi\La = m_{\sigma} +2.$

(3)\, We analyse the intersection of $\vf\La$ and $\psi\La$. First we observe that it contains the socle of $P_1\oplus P_2$.  We have that
$$\psi\gamma = (-cA_{\beta}, \sigma\gamma), \ \ \vf\alpha = (\alpha^2,  -cA_{\gamma})$$
and it follows, using the relations, that
$\psi\gamma + \vf\alpha = (bB_{\beta}, 0)$
which is in $\vf\La \cap \psi\La$. 
Using the two exact sequences, one checks that the submodule generated by $\vf\alpha$ is all of the intersection. It is 3-dimensional, and then from (1) and (2) it follows that
$\dim (\vf\La + \psi\La) = \dim\Omega^2(S_1)$ as required.

The case   $m_{\alpha}=1$ and $m_{\sigma} = 3$ is special,  here 
we  may take $c_{\sigma}=\lambda$ and $c_{\beta}=1$ (see   
Example 3.1 of \cite{ES5}), and since we exclude the singular disc algebra we have $\lambda\neq 1$.

Then $\psi\sigma = (-\beta\sigma, \sigma^2) = (-\alpha\beta,  \lambda^{-1}\gamma\beta)$ and 
$-\vf\beta = (-\alpha\beta,  \gamma\beta - bB_{\gamma})$. 
Since $\lambda\neq 1$ these are independent modulo $\vf\alpha\La$ and one gets $\vf\La + \psi\La$ has dimension $7 = \dim \Omega^2(S_1)$.

We define a surjective homomorphism $\pi_2: P_1\oplus P_2\to  \vf\La + \psi\La$ by $\pi_2(x, y):= \vf x + \psi y$. 
We have seen above that
$\vf\alpha + \psi\gamma - (bB_{\gamma}, 0) = 0$. 
Moreover, $(bB_{\alpha}, 0) =  b\lambda^{-1}\vf\alpha^2$ and therefore we have
$\Theta:= (\alpha - b\lambda^{-1}\alpha^2, \gamma) \in  \Ker (\pi_2)$. 

Now, $\alpha':= \alpha -b\lambda^{-1}\alpha^2$ and $\gamma$ are independent elements not in the radical of $\La$, representing all arrows ending at vertex $1$. 
Therefore
$\Theta\La \cong \Omega^{-1}(S_1)$. This has dimension equal to $\dim \Ker(\pi_2)$, and equality follows.
This completes the proof.
\end{proof}

In the proof we have used:

\smallskip

\begin{lemma} \label{lem:3.2}We have $\Omega(\alpha\La) = \alpha\beta\La$ and $\Omega(\sigma\La) = \gamma\alpha\La$.
\end{lemma}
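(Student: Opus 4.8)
The plan is to compute $\Omega(\alpha\La)$ and $\Omega(\sigma\La)$ directly from the defining relations at the two vertices of the quiver
\[
  \xymatrix{
 1
	\ar@<.5ex>[r]^{\beta}
    \ar@(dl,ul)^{\alpha}[]
& 2
    \ar@<.5ex>[l]^{\gamma}
\ar@(dr, ur)_{\sigma}  }
\]
with $f = (\alpha)(\beta\ \sigma\ \gamma)$ and $g = (\alpha\ \beta\ \gamma)(\sigma)$. First I would identify a projective cover of $\alpha\La$. Since $\alpha$ is an arrow with $s(\alpha) = t(\alpha) = 1$, the right module $\alpha\La$ is generated by the single element $\alpha = \alpha e_1$, so $P_1 = e_1\La \twoheadrightarrow \alpha\La$ via $x \mapsto \alpha x$ is a projective cover, and $\Omega(\alpha\La)$ is the right annihilator $\{x \in e_1\La : \alpha x = 0\}$. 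The task is thus to show this annihilator equals $\alpha\beta\La$. One inclusion is immediate once we check $\alpha\cdot\alpha\beta = \alpha^2\beta = (cA_\beta + bB_\beta)\beta = 0$ in $\La$, using that $A_\beta\beta$ and $B_\beta\beta$ both lie in $R_Q^{m_\sigma+1}\cdot(\cdots)$ and vanish because $B_\beta$ already wraps once around the full $g$-cycle of $\beta$ and right-multiplying by $\beta$ pushes past the top of $P_1$ into zero (here one invokes the explicit socle description from Proposition~\ref{prop:2.3}(iii): $\soc P_1$ is spanned by $B_\alpha = \alpha^{2m_\alpha}$-type elements and by $B_\beta$, and $B_\beta\cdot\beta = 0$ while one also checks $A_\beta\beta \ne 0$ is not what we want — rather $A_\beta\beta$ equals $B_\beta$ up to scalar only when lengths match, so care with the exact exponents is needed).

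For the reverse inclusion, I would argue by dimension count. The paths in $e_1\La$ form a basis indexed by the allowed monomials starting at vertex $1$; using the relations $\alpha^2 = cA_\beta + bB_\beta$ and $\beta\sigma = cA_\alpha$ together with the zero relations of type (3) and (4) in Definition~\ref{def:socdef}, every basis element of $e_1\La$ can be written uniquely along the $g$-cycle, and $P_1 = e_1\La$ has dimension $3m_\alpha + m_\sigma$ (matching the term $m_{\cO(\alpha)} n_{\cO(\alpha)}^2$-contribution). The image $\alpha\La$ is generated by $\alpha$ and, walking forward, $\alpha\beta, \alpha\beta\gamma, \ldots$ along the $g$-cycle of $\alpha$; since $g = (\alpha\ \beta\ \gamma)$ has length $3$ and weight $m_\alpha$, the module $\alpha\La$ has dimension $3m_\alpha - 1$ (it is $P_1$ with the top removed, truncated one step before the socle element $B_\alpha$ is reached from $\alpha$... more precisely one computes directly that $\dim\alpha\La = \dim P_2 = 3m_\alpha$ is wrong and the correct value follows from the exact sequences in Proposition~\ref{prop:3.1}). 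Then $\dim\Omega(\alpha\La) = \dim P_1 - \dim\alpha\La$, and separately $\alpha\beta\La$ is generated by $\alpha\beta$ whose dimension I compute the same way; matching the two numbers forces the inclusion $\alpha\beta\La \subseteq \Omega(\alpha\La)$ to be an equality.

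The computation for $\Omega(\sigma\La)$ is entirely parallel: $\sigma\La$ is generated by $\sigma = \sigma e_2$, its projective cover is $P_2 = e_2\La$, and $\Omega(\sigma\La)$ is the right annihilator of $\sigma$ in $e_2\La$. Using the relation $\gamma\alpha\cdot$ — wait, rather using that the path $\gamma\alpha$ starts and ends appropriately ($s(\gamma\alpha) = s(\gamma) = 2$, $t(\gamma\alpha) = t(\alpha) = 1 \ne 2$), so $\gamma\alpha\La \subseteq e_2\La$ — and checking $\sigma\cdot\gamma\alpha = 0$ via the relation of type (3) or (4) at the relevant arrows (namely $\sigma\gamma$ lies in the relations $\sigma\gamma f(\gamma) = \sigma\gamma\alpha$ or via $\gamma\alpha = $ a subpath forced to vanish when multiplied by $\sigma$ on the left), one gets $\gamma\alpha\La \subseteq \Omega(\sigma\La)$, and the reverse inclusion again by the dimension count $\dim\Omega(\sigma\La) = \dim P_2 - \dim\sigma\La$ against $\dim\gamma\alpha\La$.

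The main obstacle I anticipate is bookkeeping the precise exponents in $A_\alpha, B_\alpha, A_\beta, B_\beta$ and verifying that right-multiplication by the relevant short path genuinely lands in the socle or in zero — this is where the distinction between $m_\sigma \geq 3$, $m_\sigma = 2$ (forcing $m_\alpha \geq 2$), and the special case $m_\alpha = 1, m_\sigma = 3$ could in principle intrude. I expect, however, that Lemma~\ref{lem:3.2} holds uniformly without case division, because the statement only concerns the $g$-cycle $(\alpha\ \beta\ \gamma)$ through vertex $1$ and the loop $\sigma$, and the virtual-arrow subtleties (which are about $\xi, \eta$ in the examples, i.e.\ arrows with $m_\bullet n_\bullet = 2$) do not arise here as long as $\alpha, \beta, \gamma, \sigma$ are all non-virtual, which is guaranteed by the running assumption that we are not in the singular disc algebra $D(1)$ together with $m_\sigma \geq 3$ or ($m_\sigma = 2$ and $m_\alpha \geq 2$). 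So the proof reduces to: (a) check the two inclusions $\alpha\beta\La \subseteq \Omega(\alpha\La)$ and $\gamma\alpha\La \subseteq \Omega(\sigma\La)$ by a one-line relation computation each; (b) compute the four dimensions $\dim P_1, \dim\alpha\La, \dim P_2, \dim\sigma\La$ and the two dimensions $\dim\alpha\beta\La, \dim\gamma\alpha\La$ from the monomial basis along the $g$-cycles; (c) conclude equality from $\dim\Omega(M) = \dim P - \dim M$.
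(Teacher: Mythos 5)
Your overall strategy --- identify $\Omega(\alpha\La)$ with $\{x\in e_1\La : \alpha x = 0\}$, obtain the inclusion $\alpha\beta\La\subseteq\Omega(\alpha\La)$ from a single zero relation, and force equality by a dimension count --- is exactly the paper's argument. But the execution of the dimension count, which carries the whole weight of the reverse inclusion, goes wrong. You assign $e_1\La$ the dimension $3m_\alpha+m_\sigma$; that is the dimension of $e_2\La$ (the arrows starting at vertex $2$ are $\gamma$, contributing $m_\gamma n_\gamma = 3m_\alpha$, and $\sigma$, contributing $m_\sigma$). Both arrows starting at vertex $1$ lie in $\cO(\alpha)$, so $\dim e_1\La = 3m_\alpha+3m_\beta = 6m_\alpha$. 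You then propose $\dim\alpha\La = 3m_\alpha-1$, retract it, and suggest extracting the correct value from the exact sequences of Proposition~\ref{prop:3.1}; that is circular, since Lemma~\ref{lem:3.2} is used inside the proof of Proposition~\ref{prop:3.1} (it is precisely what identifies $\Ker(p_1)$ and $\Ker(p_2)$ there). The correct count is direct: $\alpha\La$ has basis $\alpha,\ \alpha\beta,\ \dots,\ B_\alpha$ (the $3m_\alpha$ initial subpaths of $B_\alpha$) together with $\alpha^2 = c A_\beta + b B_\beta$, so $\dim\alpha\La = 3m_\alpha+1$; hence $\dim\Omega(\alpha\La) = 6m_\alpha-(3m_\alpha+1) = 3m_\alpha-1$, which equals $\dim\alpha\beta\La = 3m_\beta-1$ (basis $\alpha\beta,\ \alpha\beta\gamma,\ \dots,\ B_\alpha$, using $\alpha\beta\sigma = c_\alpha\,\alpha A_\alpha = 0$, which again reduces to $\alpha^2\beta=0$). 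Without these numbers pinned down, step (c) of your plan does not close.

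Two smaller points. The inclusion step does not need your detour through $\alpha^2 = cA_\beta+bB_\beta$ and the (visibly uncertain) discussion of whether $A_\beta\beta$ vanishes: $\alpha^2\beta = \alpha f(\alpha)g(f(\alpha))$ is literally one of the zero relations of type (3) in Definition~\ref{def:socdef}, and the exceptions there do not apply ($f^2(\alpha)=\alpha$ is not virtual, and when $\sigma$ is virtual one has $m_\beta=m_\alpha\geq 2$). Likewise $\sigma\gamma\alpha = \sigma f(\sigma)g(f(\sigma))$ is a type (3) relation for the arrow $\sigma$; your identification of it as ``$\sigma\gamma f(\gamma)$'' is off, since $f(\gamma)=\beta$. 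Your closing observation that no case division on $m_\alpha,m_\sigma$ is needed for this lemma is correct; the special case $m_\alpha=1$, $m_\sigma=3$ only intervenes later, in the proof of Proposition~\ref{prop:3.1}.
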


\begin{proof}  
We identify $\Omega(\alpha\La)$ with $\{ x\in e_1\La \mid \alpha x=0\}$. 
By the zero relations, $\alpha^2\beta = 0$ and hence $\alpha\beta\La \subseteq \Omega(\alpha\La)$. 
We get equality, by observing that both spaces have the same dimension (namely $\dim e_1\La - (3m_{\alpha} +1) = 3m_{\beta}-1 = \dim (\alpha\beta\La)$).
Similarly one proves the second statement.
\end{proof}


\bigskip

Next we consider a simple module $S_i$ where one of the arrows at $i$ is virtual.

\medskip

\begin{proposition}\label{prop:3.3}
Let $i$ be a  vertex of $Q$ with arrows $\alpha, \ba$ starting at $i$, and where the arrow  $\bar{\alpha}$ is a virtual loop.
Then there is an exact sequence in $\mod \Lambda$
\[
  0 \rightarrow
  \Omega^{-1}(S_i) \rightarrow
  P_{t(\alpha)}
  \rightarrow
  P_{t(\alpha)}
 \rightarrow
  \Omega(S_i) \rightarrow
  0,
\]
which gives rise to a minimal projective resolution of $S_i$ in $\mod \Lambda$.
In particular, $S_i$ is  periodic of period $4$.
\end{proposition}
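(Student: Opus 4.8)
The plan is to mimic the structure of the proof of Proposition~\ref{prop:3.1}, but now exploit the fact that $\bar\alpha$ is a virtual loop, so $m_{\bar\alpha}n_{\bar\alpha}=2$ and the relations at $i$ collapse the contributions of $\bar\alpha$ into short monomials. Concretely, since $\bar\alpha$ is a virtual loop we have $n_{\bar\alpha}=2$, $m_{\bar\alpha}=1$ (so $B_{\bar\alpha}=\bar\alpha g(\bar\alpha)$ has length $2$ and $A_{\bar\alpha}=\bar\alpha$ has length $1$); the defining relations from Definition~\ref{def:socdef}(1)--(2) at $i$ express $\alpha f(\alpha)$ and the square/second relation in terms of $A_{\bar\alpha}$, and crucially $\bar\alpha^2$ (or $\bar\alpha f(\bar\alpha)$) is governed by the non-virtual arrow $\alpha$. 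The first step is to write down explicitly, in the two-vertex and multi-vertex cases separately, the local relations at $i$ and at $t(\bar\alpha)$, and to identify $\Omega(S_i)$: because $\bar\alpha$ is virtual, the radical of $P_i$ is generated essentially by a single arrow up to the short virtual monomial, so $\Omega(S_i)$ should be expressible as $\alpha\Lambda$ up to a one-dimensional correction, giving top a single $P_{t(\alpha)}$ rather than two projectives.

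Next I would compute $\Omega^2(S_i)$ by the same kernel-of-$\pi_1$ bookkeeping used in Proposition~\ref{prop:3.1}: set $\pi_1\colon P_{t(\alpha)}\to P_i$ sending the generator to the element representing the non-virtual arrow (modified, in characteristic $2$ with nonzero $b_{\bullet}$, by the border-deformation term $bB_{\bar\alpha}$ as in the $m_\alpha=1,m_\sigma=3$ case of Proposition~\ref{prop:3.1}), and identify the kernel. Using a syzygy lemma analogous to Lemma~\ref{lem:3.2} (that $\Omega(\alpha\Lambda)$ is the obvious monomial right ideal, checked by a dimension count via $\dim_K\Lambda=\sum m_{\cO}n_{\cO}^2$ from Proposition~\ref{prop:2.3}(i)), I expect $\Omega^2(S_i)$ to again be one-generated, so that $\Omega^2(S_i)\cong \Omega^{-1}(S_i)$ after the identification of $\Omega^{-1}(S_i)$ with a right ideal generated by the (deformed) arrows ending at $i$ — exactly as the map $\Theta$ was produced at the end of the proof of Proposition~\ref{prop:3.1}. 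The exact sequence in the statement then reads off as $0\to\Omega^{-1}(S_i)\to P_{t(\alpha)}\xrightarrow{\pi_2}P_{t(\alpha)}\xrightarrow{\pi_1}\Omega(S_i)\to 0$, and splicing this with $0\to\Omega(S_i)\to P_i\to S_i\to 0$ and $0\to S_i\to P_i\to\Omega^{-1}(S_i)\to 0$ yields the $4$-periodic minimal projective resolution; minimality is automatic since all maps have image in the radical.

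The reduction I would lean on is that for the zero border function this proposition is already contained in \cite{ES5} (the virtual-loop case of periodicity there), so by Proposition~\ref{prop:2.3}(ii) and (v) the only genuinely new work is the characteristic-$2$, nonzero-$b_{\bullet}$ situation, where I must track the border terms $b_{s(\alpha)}B_{\bar\alpha}$ through the maps $\pi_1,\pi_2$ and through the identification of $\Omega^{-1}(S_i)$. This is the main obstacle: verifying that the deformation does not change the isomorphism type of the cokernel/kernel — i.e. that the "corrected" generators $\alpha-(\text{border term})$ of $\Omega^{-1}(S_i)$ still represent a complete, linearly independent set of arrows ending at $i$ modulo $\rad^2$, and that the intersection/dimension count survives (one must rule out, using the exclusion of the singular disc algebra $D(1)$ and the weight inequalities (1)--(3), the degenerate sub-cases where the correction term collides with the leading term). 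Once the dimensions match — and the $\dim_K\Lambda$ formula makes every count explicit — the surjections in the displayed sequence are forced to be the asserted isomorphisms onto $\Omega^{\pm}(S_i)$, and periodicity of $S_i$ with period dividing $4$ follows; that it is exactly $4$ (not $1$ or $2$) follows because the two outer projectives agree but $P_i$ in the middle is different, so no shorter period is possible.
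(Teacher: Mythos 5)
Your overall architecture is the right one, and it is essentially the paper's: the paper disposes of this proposition by citing the proof of Lemma~5.5 of \cite{ES5} together with \cite[Lemma~5.1]{ES5-corr}, and that proof is exactly the kind of explicit syzygy computation you outline (radical of $P_i$ generated by the single non-virtual arrow $\alpha$ because $\bar\alpha = c_{\bar\alpha}^{-1}\alpha f(\alpha)$ lies in $\alpha\Lambda$, then a one-generated $\Omega^2(S_i)$ identified inside $P_{t(\alpha)}$ by a dimension count, then an identification of the last kernel with $\Omega^{-1}(S_i)$ via a $\Theta$-type element). Your argument that the period is exactly $4$ is also fine once you note that $\alpha$ cannot be a loop, so $t(\alpha)\neq i$ and $\operatorname{top}\Omega^2(S_i)\not\cong S_i$.

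Two concrete slips in your setup would, however, derail the computation if you carried it out as written. First, a virtual loop satisfies $n_{\bar\alpha}=1$ and $m_{\bar\alpha}=2$, not $n_{\bar\alpha}=2$, $m_{\bar\alpha}=1$: if the $g$-orbit of the loop $\bar\alpha$ had a second element it would also be a loop at $i$, forcing both arrows at $i$ to be loops and disconnecting $Q$. (Your derived facts $A_{\bar\alpha}=\bar\alpha$ and $\ell(B_{\bar\alpha})=2$ survive, since $B_{\bar\alpha}=\bar\alpha^2$.) Second, the border term you propose to track, $b_{s(\alpha)}B_{\bar\alpha}$ in the relation at $i$, does not exist: a virtual loop is never a border loop (that would again force $\alpha$ to be a loop), and $\alpha$ itself is not a loop, so no relation at $i$ carries a border deformation. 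The border function can only enter through the relations at \emph{other} vertices met along the $g$-cycle of $\alpha$ (e.g.\ at $t(\alpha)$ in the two-vertex case of Sections~8--9), and that is where the verification that the deformation does not disturb the dimension counts actually has to take place. Finally, the heart of the matter --- exhibiting the explicit element of $P_{t(\alpha)}$ generating $\ker\pi_1$ and checking $\dim\ker\pi_1=\dim$ of the submodule it generates --- is stated as an expectation rather than carried out; this is precisely the content of \cite[Lemma~5.1]{ES5-corr} (note that the corrigendum changed the generator from the original version of \cite{ES5}), so it cannot be waved through, and socle equivalence (Proposition~\ref{prop:2.3}(ii)) does not by itself transfer the syzygy computation from the undeformed algebra.
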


\begin{proof} This is the same as the proof of Lemma 5.5 in \cite{ES5}, taking  the correction in 
\cite[Lemma 5.1]{ES5-corr}. 
\end{proof}

\bigskip

\begin{proposition}\label{prop:3.4}
Let $i$ be a  vertex of $Q$ with arrows $\alpha, \ba$ starting at $i$ and where  $\bar{\alpha}$  is virtual but not a loop. 
Then there is an exact sequence in $\mod \Lambda$
\[
  0 \rightarrow
  \Omega^{-1}(S_i) \rightarrow
  P_{t(f(\ba))}  \rightarrow
  P_{t(\alpha)} 
   \rightarrow
  \Omega(S_i) \rightarrow
  0,
\]
which gives rise to a minimal projective resolution of $S_i$ in $\mod \Lambda$.
In particular, $S_i$ is  periodic of period $4$.
\end{proposition}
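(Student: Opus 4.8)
The plan is to follow the same strategy as in Proposition~\ref{prop:3.1} and Proposition~\ref{prop:3.3}: identify the first few syzygies of $S_i$ explicitly using the relations at the vertex $i$, exhibit generators of $\Omega^2(S_i)$, and check by a dimension count that these generators exhaust the syzygy. The new feature here is that $\bar\alpha$ is virtual but \emph{not} a loop, so $\bar\alpha$ contributes nothing essential to the Gabriel quiver (it is eliminated, being expressible via the relation $\bar\alpha f(\bar\alpha) = c_\alpha A_\alpha$), and the commutativity/zero relations at $i$ take a slightly different shape than in the loop case. Since the statement is asserted to be ``the same as'' the corresponding result in \cite{ES5} with the correction of \cite{ES5-corr}, the honest proof is essentially a citation, but I would spell out enough to make the paper self-contained when $|Q_0| = 2$ (the case not covered in \cite{ES3}).

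First I would set up notation: write $\alpha$ for the non-virtual arrow at $i$ and $\bar\alpha$ for the virtual one (so $m_{\bar\alpha} n_{\bar\alpha} = 2$ and $\bar\alpha$ is not a loop), and recall from the comments after Definition~\ref{def:socdef} that we may normalize $c_{\bar\alpha} = 1$. The projective cover of $S_i$ is $P_i$, and $\Omega(S_i) = \rad P_i$. Because $\bar\alpha$ is virtual, one of the two ``long'' generators of $\rad P_i$ collapses: using $\bar\alpha f(\bar\alpha) = c_\alpha A_\alpha$ and the zero relations of type (3)/(4) in Definition~\ref{def:socdef}, one sees that $\rad P_i$ is generated by the single arrow $\alpha$ together with a short monomial in the $\bar\alpha$-branch, and in fact $\Omega(S_i)$ is a quotient of $P_{t(\alpha)}$ — this gives the right-hand map $P_{t(\alpha)} \twoheadrightarrow \Omega(S_i)$ in the asserted sequence. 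Dually (or by the symmetric analysis of arrows \emph{ending} at $i$), $\Omega^{-1}(S_i)$ embeds into $P_{t(f(\bar\alpha))}$, which accounts for the left-hand term. It then remains to identify the middle map $P_{t(f(\bar\alpha))} \to P_{t(\alpha)}$: it should be right multiplication by a suitable path, and one must check that its image equals $\ker(P_{t(\alpha)} \to \Omega(S_i))$ and its kernel equals $\Omega^{-1}(S_i)$.

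The computational core is the dimension bookkeeping: one computes $\dim_K \Omega(S_i)$ from $\dim_K P_i = m_i n_i^2$-type formulas (via Proposition~\ref{prop:2.3}(i) applied locally to the two $g$-orbits through $i$), computes $\dim_K \Omega^2(S_i) = \dim_K P_{t(\alpha)} - \dim_K \Omega(S_i)$, exhibits the explicit elements of $P_{t(f(\bar\alpha))}$ coming from the relations that map onto $\Omega^2(S_i)$, and shows the submodule they generate already has the full dimension, using a lemma in the spirit of Lemma~\ref{lem:3.2} to control annihilators (i.e.\ to show that an element killed by $\alpha$ must lie in the expected $\alpha\bar\alpha\Lambda$-type submodule). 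The presence of the border parameter $b = b_i$ (nonzero only in characteristic $2$) enters exactly as in Proposition~\ref{prop:3.1}: it modifies the generators $\varphi, \psi$ by a socle term $b B_{(-)}$, but does not change any dimension, so the resolution has the same shape. Concatenating the two four-term pieces $0 \to S_i \to P_i \to P_{t(\alpha)} \to \dots$ and $\dots \to P_{t(f(\bar\alpha))} \to P_i \to S_i \to 0$ via $\Omega^2(S_i)$ yields a periodic projective resolution of period $4$, whence $\Omega^4_\Lambda(S_i) \cong S_i$.

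The main obstacle I expect is purely case-bookkeeping rather than conceptual: the exceptional small-weight situations. When $m_{\bar\alpha} = 1, n_{\bar\alpha} = 3$ (or the mirror condition $m_\alpha = 1, n_\alpha = 3$) some of the zero relations in Definition~\ref{def:socdef}(3),(4) are \emph{absent}, so the ``collapse'' of $\rad P_i$ onto a single projective, and the annihilator lemma, must be re-verified by hand; this is precisely the point where the correction \cite{ES5-corr} was needed, and I would treat it as a separate short sub-case, checking that the excluded singular disc algebra $D(1)$ is the only configuration where the argument genuinely fails. Away from those degenerate weights the computation is routine and essentially identical to the proof of Proposition~\ref{prop:3.1}, so I would write ``the argument is the same as above, \emph{mutatis mutandis}'' for the bulk of it and only detail the degenerate cases and the two-vertex quivers.
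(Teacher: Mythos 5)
Your proposal is correct and takes essentially the same approach as the paper: the paper's proof of this proposition is precisely the citation you identify (Lemma 5.6 of \cite{ES5} with the correction in \cite[Lemma 5.2]{ES5-corr}), and your sketched computation --- collapsing $\rad P_i$ onto $\alpha\Lambda$ via the virtuality relation $\alpha f(\alpha)=c_{\bar\alpha}\bar\alpha$, exhibiting generators of $\Omega^2(S_i)$ from the relations, and closing the argument by a dimension count with separate treatment of the degenerate weights --- is exactly the method the paper carries out in detail for Proposition~\ref{prop:3.1}. No gaps.
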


\begin{proof}
This is the same as that of Lemma 5.6 of \cite{ES5}, together with the correction in 
\cite[Lemma 5.2]{ES5-corr}.
\end{proof}

\bigskip


\medskip

We construct now the first steps of a minimal
projective bimodule resolution of $\Lambda$.  As explained in in \cite{ES5}, Section 5, 
they are as follows. Note that we must use the Gabriel quiver $Q_{\La}$, recall this is the quiver obtained from $Q$ by removing the virtual arrows.
\[
  \bP_3 \xrightarrow{S}
  \bP_2 \xrightarrow{R}
  \bP_1 \xrightarrow{d}
  \bP_0 \xrightarrow{d_0}
  \Lambda \to 0
\]
where
\begin{align*}
  \bP_0
     & = \bigoplus_{i \in Q_0} \Lambda e_i \otimes e_i \Lambda ,
  \\
  \bP_1
     &= \bigoplus_{\alpha \in (Q_{\La})_1} \Lambda e_{s(\alpha)} \otimes e_{t(\alpha)} \Lambda , \\
     \bP_2 
     &= \bigoplus_{\alpha \in (Q_{\La})_1}  \Lambda e_{s(\ba)} \otimes e_{t(f(\ba))} \Lambda ,\\
       \bP_3
         &  = \bigoplus_{i \in Q_0} \Lambda e_i \otimes e_i \Lambda.
\end{align*}
The homomorphism $d_0$
is defined by
$d_0 ( e_i \otimes e_i ) = e_i$ for all $i \in Q_0$,
and the homomorphism $d : \bP_1 \to \bP_0$
is defined by
\[
  d \big( e_{s(\alpha)} \otimes e_{t(\alpha)} \big)
    = \alpha \otimes e_{t(\alpha)} - e_{s(\alpha)} \otimes \alpha
\]
for any arrow $\alpha$ in $(Q_{\La})_1$.
In particular, we have
$\Omega_{\Lambda}^1(\Lambda) = \Ker d_0$
and
$\Omega_{\Lambda}^2(\Lambda) = \Ker d$.

For each arrow $\alpha$ in $(Q_{\La})_1$, we define the element
$\mu_{\alpha} = e_{s(\ba)} \mu_{\alpha} e_{t(f(\ba))}$
as follows
\begin{align*}
  \mu_{\alpha} &= \ba f(\ba) - c_{\alpha} A_{\alpha}
  &&
            \mbox{if $\ba$ is not a border loop},
  \!\!\!\!\!\!\!\!\!\!\!
  \\
  \mu_{\alpha} &= \ba^2 - c_{\alpha} A_{\alpha} - b_i B_{\alpha}
  \!\!\!\!\!\!\!\!\!\!\!\!\!\!\!\!\!\!\!\!\!\!\!\!
  &&
            \mbox{if $\ba$ is a border loop}.
\end{align*}

Since we work with the Gabriel quiver $Q_{\La}$, we must make substitutions. 
First, since $\alpha$ is not virtual, no arrow in the $g$-cycle of $\alpha$ is virtual, and therefore $A_{\alpha}$ is a path in 
$Q_{\La}$. If $\ba$ is virtual then we substitute $\ba = \alpha f(\alpha)$ (using that $c_{\ba}=1$).  
Note that if $\ba$ is virtual then $f(\alpha)$ is not virtual (see Remark 4.2 of \cite{ES5}). Similarly we substitute $f(\alpha) = g(\alpha)f(g(\alpha))$ in case
$f(\alpha)$ is virtual. Recall that $\alpha, f(\alpha)$ cannot be both virtual. 

\medskip

We define the homomorphism $R : \bP_2 \to \bP_1$
in $\mod \Lambda^e$ by
\[
  R\big( e_{s(\ba)} \otimes e_{t(f(\ba))}\big) = \varrho (\mu_{\alpha})
\]
for any arrow $\alpha$ in $(Q_{\La})_1$,
where $\varrho : K Q_{\La} \to \bP_1$
is the $K$-linear homomorphism which is defined on paths by
$$\alpha_1\ldots \alpha_r \mapsto \sum_{k=1}^r \alpha_1\ldots \alpha_{k-1}\otimes \alpha_{k+1}\ldots \alpha_r$$ 
(see Section 3 of \cite{ES3}).  Then 
$\Im R \subseteq \Ker d$ (see Lemma 3.4 of \cite{ES3}).

\begin{lemma}
\label{lem:3.5}
The homomorphism $R : \bP_2 \to \bP_1$
induces a projective cover
$\Omega_{\Lambda^e}^2(\Lambda)$
in $\mod \Lambda^e$.
In particular, we have $\Omega_{\Lambda^e}^3(\Lambda) = \Ker R$.
\end{lemma}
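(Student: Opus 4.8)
The plan is to show that the map $R:\bP_2\to\bP_1$ is a projective cover of its image $\Omega^2_{\Lambda^e}(\Lambda)=\Ker d$, which amounts to two things: first, that $\Im R=\Ker d$ (we already know $\Im R\subseteq\Ker d$ from Lemma~3.4 of \cite{ES3}), and second, that $R$ is a \emph{minimal} map, i.e.\ $\Im R\subseteq \operatorname{rad}(\bP_1)$, equivalently no summand of $\bP_2$ maps isomorphically onto a summand of $\bP_1$. The second point is immediate from the construction: each generator $e_{s(\ba)}\otimes e_{t(f(\ba))}$ is sent to $\varrho(\mu_\alpha)$, and since $\mu_\alpha$ is a $K$-linear combination of paths of length $\geq 2$ (the terms $\ba f(\ba)$ or $\ba^2$, and $c_\alpha A_\alpha$, $b_i B_\alpha$, all of length $\geq 2$), every term of $\varrho(\mu_\alpha)$ has at least one tensor factor in $\operatorname{rad}\Lambda$; hence $\Im R\subseteq \operatorname{rad}(\Lambda e\otimes e\Lambda$-part$)=\operatorname{rad}\bP_1$. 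So the real content is the equality $\Im R=\Ker d$.

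For the equality I would argue by comparing dimensions, using Happel's description of the minimal bimodule resolution via the groups $\Ext^n_\Lambda(S_i,S_j)$, exactly as announced in the text preceding Proposition~\ref{prop:3.1}. Concretely, $\bP_2$ is the correct second projective in a minimal bimodule resolution if and only if, for each pair $i,j$, the multiplicity of $\Lambda e_i\otimes e_j\Lambda$ in $\bP_2$ equals $\dim_K\Ext^2_\Lambda(S_i,S_j)$; and Propositions~\ref{prop:3.1}, \ref{prop:3.3}, \ref{prop:3.4} give precisely the minimal projective resolutions of all the simple modules $S_i$, hence compute all these $\Ext^2$ spaces. Reading off the $P_2$-term in those resolutions, one checks that $\dim\Ext^2_\Lambda(S_i,S_j)$ counts exactly the arrows $\alpha\in (Q_\Lambda)_1$ with $s(\ba)=i$ and $t(f(\ba))=j$ — matching the definition of $\bP_2$. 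Therefore $\bP_2$ is (isomorphic to) the projective cover of $\Omega^2_{\Lambda^e}(\Lambda)=\Ker d$; since $R$ is a minimal map landing in $\Ker d$ from the correct projective, and $\operatorname{top}\bP_2$ surjects onto $\operatorname{top}(\Ker d)$ by the minimality count, $R$ must be surjective onto $\Ker d$. One subtlety to handle carefully is the passage to the Gabriel quiver $Q_\Lambda$ and the substitutions $\ba=\alpha f(\alpha)$, $f(\alpha)=g(\alpha)f(g(\alpha))$ when virtual arrows occur: one must verify that after these substitutions $\varrho(\mu_\alpha)$ still lies in $\bP_1$ (i.e.\ only involves arrows of $Q_\Lambda$) and still represents the same bimodule element, which is exactly what the paragraph before the lemma sets up — this is routine but must be checked case by case against Remark~4.2 of \cite{ES5}.

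The last assertion $\Omega^3_{\Lambda^e}(\Lambda)=\Ker R$ is then formal: once $R:\bP_2\to \Omega^2_{\Lambda^e}(\Lambda)$ is a projective cover, its kernel is by definition a syzygy of $\Omega^2_{\Lambda^e}(\Lambda)$, i.e.\ $\Ker R=\Omega_{\Lambda^e}(\Omega^2_{\Lambda^e}(\Lambda))=\Omega^3_{\Lambda^e}(\Lambda)$.

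I expect the main obstacle to be the bookkeeping in the dimension count for $\Ext^2_\Lambda(S_i,S_j)$: one needs the minimal projective resolutions of Propositions~\ref{prop:3.1}, \ref{prop:3.3}, \ref{prop:3.4} to \emph{simultaneously} account for all four cases (border vs.\ non-border vertices, virtual loop, virtual non-loop) and to see that in every case the $P_2$-layer is indexed exactly by the arrows described in the definition of $\bP_2$ — in particular that the virtual arrows, which are deleted from $Q_\Lambda$, contribute nothing spurious. This is essentially the same verification carried out in Section~5 of \cite{ES5} for the undeformed algebra, and the point here is that the border deformation term $b_i B_\alpha$ is lower-order (it lies deep in the radical and in the socle), so it does not change any of the relevant dimensions; making that last observation precise, case by case, is where the work lies.
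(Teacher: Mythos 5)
Your proposal is correct and follows essentially the same route as the paper, which simply invokes the arguments of Lemmas 7.2 and 9.2 of \cite{ES3}: those arguments are exactly the combination you describe of minimality ($\Im R\subseteq\rad\bP_1$), Happel's theorem, and the $\Ext^2_\Lambda(S_i,S_j)$ count read off from the resolutions of the simple modules. The one step you assert rather than justify --- that the induced map on tops is onto --- is, in the cited arguments, obtained by applying $-\otimes_\Lambda S_j$ and identifying the elements $\varrho(\mu_\alpha)$ with the generators (such as $\varphi$ and $\psi$) of the second syzygies $\Omega^2_\Lambda(S_j)$ exhibited in Propositions \ref{prop:3.1}, \ref{prop:3.3} and \ref{prop:3.4}.
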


\begin{proof}
This follows by the arguments  in the proof of
Lemma 7.2 and 9.2 of \cite{ES3}.
\end{proof}

\medskip

For each vertex $i \in Q_0$, for $\alpha, \ba$ with $s(\alpha)=i=s(\ba)$, we consider the element in $\bP_2$
\[
  \psi_i =
     \big(e_i \otimes e_{t(f(\alpha))}\big) f^2(\alpha)
     + \big(e_i \otimes e_{t(f(\bar{\alpha}))}\big) f^2(\bar{\alpha})
     - \alpha \big(e_{t(\alpha)} \otimes e_i\big)
     - \bar{\alpha} \big(e_{t(\bar{\alpha})} \otimes e_i\big)
     .
\]
As in \cite{ES5}, Section 5, if $\ba$ is virtual we take the same formula but omit the terms which have virtual arrows, that is
$$
 \psi_i = (e_i\otimes e_{t(f(\ba))})f^2(\ba) - \alpha(e_{t(\alpha)}\otimes e_i) .
$$
If $\alpha$ is virtual then we define similarly
$$\psi_i = (e_i\otimes e_{t(f(\alpha))})f^2(\alpha) - \ba(e_{t(\ba)}\otimes e_i).
$$
Moreover, for each vertex  $i \in \partial(Q,f)$
and the border loop $\alpha$ at $i$,
we consider the elements in $\bP_2$
\begin{align*}
 \psi_i^{(1)} &= (b_i c_{\alpha}^{-1}) (\alpha \otimes \alpha + e_i \otimes \alpha^2),
\\
  \psi_i^{(2)} &= (b_i c_{\alpha}^{-1})^2 (\alpha \otimes \alpha^2 + e_i \otimes \alpha^3),
\\
  \psi_i^{(3)} &= (b_i c_{\alpha}^{-1})^3 (\alpha \otimes \alpha^3).
\end{align*}
Then, for each vertex  $i \in Q$, we define the element
$\bar{\psi}_i$ in $\bP_2$ as follows
\begin{align*}
  \bar{\psi}_i &= \psi_i
  &&
            \mbox{if $i \notin \partial(Q,f)$},
  \\
  \bar{\psi}_i &= \psi_i + \psi_i^{(1)} + \psi_i^{(2)} + \psi_i^{(3)}
  \!\!\!\!\!\!\!\!\!\!\!\!\!\!\!\!\!\!\!\!\!\!\!\!
  &&
            \mbox{if $i \in \partial(Q,f)$}.
\end{align*}
Note that if $\alpha$ is a border loop then $\alpha, \ba$ are not virtual.

We define the homomorphism
$S : \bP_3 \to \bP_2$
in $\mod \Lambda^e$ by
\[
  S( e_i \otimes e_i ) = \bar{\psi}_i
\]
for any vertex $i \in Q_0$.
Then we have the following analogue of
Lemma 7.3 of \cite{ES3}.

\begin{proposition}
\label{prop:3.6}
The homomorphism $S : \bP_3 \to \bP_2$
induces a projective cover of
$\Omega_{\Lambda^e}^3(\Lambda)$
in $\mod \Lambda^e$.
In particular, we have
$\Omega_{\Lambda^e}^4(\Lambda) = \Ker S$.
\end{proposition}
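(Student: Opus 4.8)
The plan is to follow the template of the analogous statement for (undeformed) weighted surface algebras, namely Lemma 7.3 of \cite{ES3} (and its 2-vertex counterpart Lemma 9.2 there), and to check that the extra terms $\psi_i^{(1)},\psi_i^{(2)},\psi_i^{(3)}$ coming from the border function do not disturb the argument. First I would verify that $S$ does land in $\Omega_{\Lambda^e}^3(\Lambda) = \Ker R$, i.e.\ that $R\bigl(\bar\psi_i\bigr)=0$ for every vertex $i$. For non-border vertices this is exactly the computation of \cite{ES3}. For a border vertex $i$ with border loop $\alpha$ one computes $R(\psi_i)$ and $R(\psi_i^{(j)})$ separately: the point is that modulo $\Im d$ the defining relation $\mu_\alpha = \alpha^2 - c_\alpha A_\alpha - b_i B_\alpha$ contributes a term $b_i B_\alpha$ which, after applying $\varrho$, is precisely cancelled by the telescoping contribution of the $\psi_i^{(j)}$. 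This is the same mechanism by which the socle-deformation terms were chosen, and it is essentially a bookkeeping check in $\bP_1$ using $d(e_{s(\alpha)}\otimes e_{t(\alpha)}) = \alpha\otimes e_{t(\alpha)} - e_{s(\alpha)}\otimes\alpha$.

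Next I would show that $S$ is a projective cover of $\Omega_{\Lambda^e}^3(\Lambda)$. Since $\bP_3 = \bigoplus_{i\in Q_0}\Lambda e_i\otimes e_i\Lambda$ is already the projective cover of $\Lambda$ itself, it suffices to prove two things: that $S$ is surjective onto $\Ker R$, and that $S$ is a \emph{minimal} cover, i.e.\ that each $\bar\psi_i$ lies in $\operatorname{rad}(\Lambda^e)\bP_2$ (equivalently, no $\bar\psi_i$ has a component which is a unit times a generator $e_j\otimes e_j$ of some summand of $\bP_2$ in degree $0$) and that the images $\bar\psi_i + \operatorname{rad}\,\bP_2$ are linearly independent in $\bP_2/\operatorname{rad}\,\bP_2$. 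Minimality is immediate from the shape of the formulas: every term of $\psi_i$ and of the $\psi_i^{(j)}$ is a tensor in which at least one tensor factor is a path of length $\geqslant 1$, so $\bar\psi_i\in\operatorname{rad}(\Lambda\otimes\Lambda^{\op})\cdot\bP_2$, whence $\operatorname{top}\bP_3 \to \operatorname{top}(\Im S)$ is an isomorphism once surjectivity is known. For surjectivity one uses the standard dimension count: by Proposition~\ref{prop:3.1}, Proposition~\ref{prop:3.3}, Proposition~\ref{prop:3.4} every simple $\Lambda$-module is $\Omega$-periodic of period $4$, so by Happel's long exact sequence \cite{Ha} the dimensions $\dim_K\Ext^n_\Lambda(S_i,S_j)$ are known for all $n$ and all $i,j$; comparing $\dim_K\Hom_{\Lambda^e}(\bP_3, \Lambda/\!\operatorname{rad}\Lambda\otimes \cdots)$-type data with the already-established terms $\bP_0,\bP_1,\bP_2$ (Lemma~\ref{lem:3.5}) forces $\bP_3$ to have exactly the right graded dimension, so the surjection $\bP_3\twoheadrightarrow \Omega_{\Lambda^e}^3(\Lambda)$ induced by $S$ (once we know $\Im S\subseteq\Ker R$ and that the $\bar\psi_i$ are top-independent) must be a projective cover. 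Equivalently, and perhaps cleaner, one shows directly that $\dim_K \Ker R = \dim_K \Im S = \sum_i \dim_K \Lambda e_i\otimes e_i\Lambda - (\text{defect})$ using the periodicity to identify the defect with $\dim\Lambda$.

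The main obstacle I expect is the border-vertex bookkeeping in the first step, and in particular the $2$-vertex cases, where the Gabriel quiver $Q_\Lambda$ is a proper subquiver of $Q$ and one has to carry out the substitutions $\ba = \alpha f(\alpha)$, $f(\alpha)=g(\alpha)f(g(\alpha))$ consistently before applying $\varrho$; here the "exceptional" weight configurations (such as $m_\alpha=1,\ m_\sigma=3$, which already required separate treatment in the proof of Proposition~\ref{prop:3.1}) must be checked by hand, and one must confirm that the parameter normalisation $c_\beta = 1$ for virtual $\beta$ (and the exclusion of the singular disc algebra $D(1)$) is compatible with the formulas for $\psi_i^{(j)}$. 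I would handle this by first doing the generic case verbatim from \cite{ES3}, Lemma 7.3, then treating the finitely many small/$2$-vertex configurations separately, in each case exhibiting $\Ker R$ explicitly as the image of the stated $\bar\psi_i$ and matching dimensions against the period-$4$ resolutions of the simples from Propositions~\ref{prop:3.1}, \ref{prop:3.3} and \ref{prop:3.4}. Once $\Omega_{\Lambda^e}^4(\Lambda)=\Ker S$ is established, combined with the earlier identification of $\bP_0,\bP_1,\bP_2,\bP_3$ one reads off that $\Omega_{\Lambda^e}^4(\Lambda)\cong\Lambda$ as a bimodule (a twisted identity-type isomorphism), giving periodicity of period dividing $4$, and non-periodicity of any proper divisor follows from the non-projectivity of the intermediate syzygies, completing the proof of Theorem~\ref{th:main1}(iv).
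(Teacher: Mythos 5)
Your plan is essentially the paper's own proof, which simply defers to Proposition 9.3 of \cite{ES3}: one checks $R(\bar{\psi}_i)=0$ (the terms $\psi_i^{(1)},\psi_i^{(2)},\psi_i^{(3)}$ cancelling the $b_iB_{\alpha}$ contribution of $\mu_{\alpha}$ at border vertices) and then uses Happel's theorem together with the period-$4$ resolutions of the simple modules from Propositions \ref{prop:3.1}, \ref{prop:3.3} and \ref{prop:3.4} to identify $\bP_3$ as the projective cover of $\Ker R$, so that surjectivity of $S$ onto $\Ker R$ finishes the argument. One wording slip worth fixing: since each $\bar{\psi}_i\in\rad\bP_2$, the classes $\bar{\psi}_i+\rad\bP_2$ all vanish in $\bP_2/\rad\bP_2$, and the independence you actually need is of the images of the $\bar{\psi}_i$ in $\topp(\Ker R)$ --- which is exactly what comparing $S\otimes_{\Lambda}S_i$ with the one-sided syzygies $\Omega^3_{\Lambda}(S_i)$ delivers.
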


\begin{proof}
This is the same as the proof of Proposition 9.3 in \cite{ES3}.
\end{proof}

\bigskip

The first part of the following lemma holds more generally for self-injective algebras.

\medskip

\begin{lemma}\label{lem:3.7}
{\rm (i)} \ An injective hull of $\La$ as a bimodule is given by $\theta: \La\to  \oplus \La(e_i\otimes e_i)\La$, taking
$$1\mapsto \xi := \sum_{b\in \cB}\sum_{i\in Q_0} be_i\otimes e_ib$$
where $\cB$ is a vector space basis for $\La$ of the form $\cB =\bigcup_{i, j} e_i\cB e_j$.

{\rm (ii)}\, The image of $\theta$ is equal to the kernel of  
$S: \La^e \mapsto   \bP:= \oplus_{\gamma\in (Q_{\La})_1} \langle e_{t(\gamma)}\otimes e_{s(\gamma)}\rangle$, where
$$S(e_i\otimes e_i) = \sum_{t(\gamma)=i} (e_{t(\gamma)}\otimes e_{s(\gamma)})\gamma - \sum_{s(\delta)=i}\delta(e_{t(\delta)}\otimes e_{s(\delta)}).$$
\end{lemma}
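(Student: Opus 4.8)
The plan is to prove the two parts of Lemma~\ref{lem:3.7} more or less independently, using the symmetry of $\Lambda$ to identify the relevant bimodules with the familiar enveloping-algebra description.

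\textbf{Part (i).} First I would recall that since $\Lambda$ is symmetric, $\Lambda \cong D(\Lambda)$ as bimodules via a symmetric nondegenerate associative bilinear form $\langle -,-\rangle$, and that consequently $\Lambda$ has a simple socle as a bimodule sitting inside each block, equal to the socle of $D(\Lambda)$. The bimodule $\bP_0 = \bigoplus_{i\in Q_0}\Lambda e_i\otimes e_i\Lambda$ is an injective cogenerator of $\mod\Lambda^e$ because $\Lambda e_i\otimes e_i\Lambda \cong \Lambda e_i\otimes D(\Lambda e_i)\cong D(\Hom_K(\Lambda e_i,\Lambda e_i))$ is injective, being the injective envelope of $S_i\otimes S_i^{\op}$. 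So it suffices to show $\theta$ is an injective bimodule map whose image meets every simple bimodule summand of $\soc(\bP_0)$, equivalently that $\theta$ is injective (it is automatically essential once injective, since $\Lambda$ has essential socle and $\theta$ is nonzero on each block). Injectivity of $\theta$: one checks $\theta$ is a bimodule homomorphism by a direct computation using that $\cB$ and its dual basis $\cB^* = \{b^*\}$ (w.r.t.\ the symmetric form) satisfy $\sum_b b\otimes b^* $ transforms correctly, i.e.\ $a\xi = \xi a$ for all $a\in\Lambda$, which is exactly the statement that $\sum_b b\otimes b^*$ is the Casimir/canonical element; the choice of $\cB$ compatible with the idempotent decomposition $\cB=\bigcup_{i,j}e_i\cB e_j$ just bookkeeps the $e_i\otimes e_i$ components. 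Then $\theta(1)=\xi\ne 0$ forces $\theta$ injective since $1$ generates $\soc$ under the two-sided action — more precisely $\theta$ restricted to $\soc(\Lambda)$ is nonzero, so $\Ker\theta\cap\soc(\Lambda)=0$, hence $\Ker\theta=0$.

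\textbf{Part (ii).} Here I would dualize the presentation $\bP_1\xrightarrow{d}\bP_0\xrightarrow{d_0}\Lambda\to 0$. Applying $D(-)$ and using $D(\bP_j)\cong\bP_j$ (again by symmetry of $\Lambda$, since $D(\Lambda e_i\otimes e_i\Lambda)\cong e_i\Lambda\otimes\Lambda e_i\cong\Lambda e_i\otimes e_i\Lambda$ and $D(\Lambda e_{s(\alpha)}\otimes e_{t(\alpha)}\Lambda)\cong \Lambda e_{t(\alpha)}\otimes e_{s(\alpha)}\Lambda$) converts the start of a projective resolution of $\Lambda$ into the end of an injective resolution, i.e.\ an exact sequence $0\to\Lambda\xrightarrow{D(d_0)}D(\bP_0)\xrightarrow{D(d)}D(\bP_1)$. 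Transporting through the symmetry isomorphisms, $D(d_0)$ becomes $\theta$ (this uses the same Casimir element as in (i), which is the standard way the self-injective structure identifies $D(d_0)$ with the inclusion of $\Lambda$ into $\bigoplus\Lambda e_i\otimes e_i\Lambda$), and $D(d)$ becomes the map $S$ with the stated formula, because the transpose of $d\big(e_{s(\alpha)}\otimes e_{t(\alpha)}\big)=\alpha\otimes e_{t(\alpha)}-e_{s(\alpha)}\otimes\alpha$ is precisely $e_i\otimes e_i\mapsto \sum_{t(\gamma)=i}(e_{t(\gamma)}\otimes e_{s(\gamma)})\gamma - \sum_{s(\delta)=i}\delta(e_{t(\delta)}\otimes e_{s(\delta)})$. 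Exactness $\Ker S = \Im\theta = \Im D(d_0)$ is then just the exactness of the dualized sequence at $D(\bP_0)$. I would also note that the relevant target should be read as $\bigoplus_{\gamma\in(Q_\Lambda)_1}\Lambda e_{t(\gamma)}\otimes e_{s(\gamma)}\Lambda$ (the bracket notation $\langle e_{t(\gamma)}\otimes e_{s(\gamma)}\rangle$ abbreviating the summand generated by that element), matching $D(\bP_1)$ under the identification above.

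\textbf{Main obstacle.} The routine parts are the dimension/injectivity bookkeeping; the real care is in pinning down the identifications $D(\bP_j)\cong\bP_j$ and $D(\bP_1)\cong\bigoplus\Lambda e_{t(\gamma)}\otimes e_{s(\gamma)}\Lambda$ so that $D(d_0)$ literally becomes $\theta$ with the Casimir element $\xi=\sum_{b,i}be_i\otimes e_ib$ and $D(d)$ literally becomes the displayed $S$ — i.e.\ chasing signs and the placement of the symmetric form through the two-sided tensor factors. One must verify that the dual basis $b^*$ of a basis element $b\in e_i\cB e_j$ lies in $e_j\Lambda e_i$, so that the sum $\sum_b be_i\otimes e_ib$ is consistent with the bimodule structure claimed, and that the matrix of $d$ with respect to the obvious bases is (up to the symmetry) the transpose of the matrix of $S$; this is where a sign or a source/target swap is easiest to get wrong, so I would do that transposition computation explicitly. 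Everything else follows formally from $\Lambda$ being symmetric together with the already-established presentation $\bP_1\to\bP_0\to\Lambda\to 0$.
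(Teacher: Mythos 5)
Your proposal is correct in substance, but it takes a genuinely different route from the paper. The paper's own proof is essentially a citation: part (i) is quoted verbatim from \cite{ES1} (pages 119--120), where it is also shown that $S(\xi)=0$, and the equality $\Ker S=\theta(\La)$ in part (ii) is then obtained by the same explicit dimension-count arguments as in Lemmas 7.2 and 9.2 of \cite{ES3} (i.e.\ exactly as in Lemma~\ref{lem:3.5}: one inclusion plus comparison of dimensions). You instead argue self-containedly: for (i) you use that $\bP_0$ is projective-injective over the symmetric algebra $\La^e$, that the Casimir element makes $\theta$ a bimodule map, and a socle comparison to get essentiality; for (ii) you apply $D(-)$ to the presentation $\bP_1\xrightarrow{d}\bP_0\xrightarrow{d_0}\La\to 0$ and identify $D(d_0)$ with $\theta$ and $D(d)$ with $S$, so that $\Ker S=\Im\theta$ is literally the exactness of the dualized sequence. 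Your route buys exactness for free and avoids the dimension count, at the price of the bookkeeping you correctly flag: pinning down $D(\bP_0)\cong\bP_0$, $D(\bP_1)\cong\bigoplus_{\gamma}\La e_{t(\gamma)}\otimes e_{s(\gamma)}\La$, and checking that the transported maps are exactly $\theta$ (with the canonical element $\sum_b b\otimes b^*$ --- note the displayed formula for $\xi$ in the statement should be read with the dual basis in the second tensor factor, as in the proof of Theorem~\ref{th:3.8}) and $S$ with the stated signs. Two small points: $\bP_0$ is injective but not an injective cogenerator of $\mod\La^e$ (it only contains the hulls of the ``diagonal'' simples $S_i\otimes S_i$), which is harmless for your argument; and essentiality of $\Im\theta$ is not automatic from injectivity of $\theta$ alone --- it needs the socle count $\theta(\soc\La)=\soc\bP_0$ that you do sketch.
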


\bigskip

\begin{proof} Part (i) is proved in \cite{ES1} [pages 119-120]. 
As well, it is shown there that $S(\xi)=0$. 
To show that the kernel of $S$ is equal to $\theta(\La)$, one uses the arguments in the 
proofs of Lemmas 7.2 and 9.2 of \cite{ES3}, that is, it is the same as
in Lemma \ref{lem:3.5}.
\end{proof}


\begin{theorem}
\label{th:3.8}
There is an isomorphism
$\Omega_{{\Lambda}^e}^4(\Lambda) \cong \Lambda$
in $\mod \Lambda^e$.
In particular, $\Lambda$ is a periodic algebra
of period $4$.
\end{theorem}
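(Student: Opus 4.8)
The plan is to assemble the bimodule periodicity from the pieces already prepared in this section. By Lemma~\ref{lem:3.5} and Proposition~\ref{prop:3.6} we have the first four terms of a minimal projective bimodule resolution of $\Lambda$, with $\Omega_{\Lambda^e}^4(\Lambda) = \Ker S$, where $S : \bP_3 \to \bP_2$ is the map $e_i \otimes e_i \mapsto \bar\psi_i$. On the other hand, Lemma~\ref{lem:3.7} identifies an injective hull $\theta : \Lambda \to \bigoplus_i \Lambda(e_i \otimes e_i)\Lambda = \bP_3$ of $\Lambda$ as a bimodule, whose image is the kernel of a map $\bP_3 \to \bP$ with $\bP = \bigoplus_{\gamma \in (Q_\Lambda)_1} \langle e_{t(\gamma)} \otimes e_{s(\gamma)}\rangle$, defined on the generators by the ``transpose'' formula displayed in Lemma~\ref{lem:3.7}(ii). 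Since $\bP_2 = \bigoplus_{\alpha \in (Q_\Lambda)_1} \Lambda e_{s(\ba)} \otimes e_{t(f(\ba))}\Lambda$ and $\bP$ are free bimodules with the same number of generators (one per arrow of $Q_\Lambda$, after matching $\alpha \leftrightarrow \gamma$ via the bijection $\gamma = f(\ba)$, so that $t(f(\ba)) = t(\gamma)$ and $s(\ba) = s(f^2(\alpha))\cdot$-adjacency lines up with $s(\gamma)$), the strategy is to produce an isomorphism $\Phi : \bP_2 \to \bP$ of bimodules intertwining $S : \bP_3 \to \bP_2$ with the map $\bP_3 \to \bP$ of Lemma~\ref{lem:3.7}(ii), up to an automorphism of $\bP_3$; then $\Ker S \cong \Ker(\bP_3 \to \bP) = \theta(\Lambda) \cong \Lambda$, giving $\Omega_{\Lambda^e}^4(\Lambda) \cong \Lambda$.

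Concretely, I would proceed as follows. First, recall/restate that $\bP_2$ and $\bP$ are isomorphic as bimodules via sending the generator indexed by $\alpha$ to a scalar multiple of the generator indexed by the corresponding arrow; the point is to choose these scalars and the indexing bijection so that the composite $\bP_3 \xrightarrow{S} \bP_2 \xrightarrow{\sim} \bP$ agrees with the map of Lemma~\ref{lem:3.7}(ii). To check this, one computes $S(e_i \otimes e_i) = \bar\psi_i$ and compares, term by term, with $\sum_{t(\gamma)=i}(e_{t(\gamma)} \otimes e_{s(\gamma)})\gamma - \sum_{s(\delta)=i}\delta(e_{t(\delta)} \otimes e_{s(\delta)})$. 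The element $\psi_i$ has exactly the right shape: its two ``incoming'' terms $\alpha(e_{t(\alpha)} \otimes e_i)$ and $\ba(e_{t(\ba)} \otimes e_i)$ match the $\delta$-sum, and its two ``outgoing'' terms $(e_i \otimes e_{t(f(\alpha))})f^2(\alpha)$ and $(e_i \otimes e_{t(f(\ba))})f^2(\ba)$ match the $\gamma$-sum once one uses $f^3 = \id$ to rewrite $f^2(\alpha)$ as $g$-cycle data and the substitutions for virtual arrows explained before Lemma~\ref{lem:3.5}. The border correction terms $\psi_i^{(1)} + \psi_i^{(2)} + \psi_i^{(3)}$ at a border vertex $i$ are absorbed by composing with a bimodule \emph{automorphism} of $\bP_3$ (equivalently, a change of the generator $e_i \otimes e_i$): the factor $(\mathrm{id} - b_i c_\alpha^{-1}\alpha - \dots)$ is a unit, exactly as in the vertex-$1$ analysis in the proof of Proposition~\ref{prop:3.1} where the generator $\alpha$ was replaced by the unit multiple $\alpha' = \alpha - b\lambda^{-1}\alpha^2$. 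So the whole argument reduces to the analogue of Proposition~9.3 / Lemma~7.3 of \cite{ES3}, adapted to $Q_\Lambda$ and to the extra border terms.

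Then I would conclude: composing the isomorphism $\Phi$ with the identification $\Ker(\bP_3 \to \bP) = \theta(\Lambda)$ of Lemma~\ref{lem:3.7}(ii) and with $\theta^{-1} : \theta(\Lambda) \xrightarrow{\sim} \Lambda$, we obtain $\Omega_{\Lambda^e}^4(\Lambda) = \Ker S \cong \Lambda$ as $\Lambda$-bimodules. Since $\Lambda$ has no projective direct summand as a bimodule (it is a non-semisimple indecomposable symmetric algebra, so $\Lambda \notin \add \Lambda^e$ unless $\Lambda$ is semisimple), the resolution $\bP_\bullet$ is genuinely minimal and $\Lambda$ is $\Omega$-periodic of period dividing $4$; that the period is exactly $4$ (not $1$ or $2$) follows because the individual simple modules have $\Omega$-period exactly $4$ by Propositions~\ref{prop:3.1}, \ref{prop:3.3}, \ref{prop:3.4}, and a period $n$ for $\Lambda$ as a bimodule forces $\Omega^n_\Lambda(S_i) \cong S_i$ for every simple $S_i$.

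\textbf{Main obstacle.} The only real work is the bookkeeping in the middle paragraph: verifying that $S$, written on the Gabriel quiver $Q_\Lambda$ with the virtual-arrow substitutions, matches the transposed differential of Lemma~\ref{lem:3.7}(ii) after a unit change of generators, and in particular that the three border correction terms $\psi_i^{(j)}$ are precisely what is needed to turn the naive match at a border vertex into an honest one. This is a finite but delicate calculation, entirely parallel to Lemma~7.3 and Proposition~9.3 of \cite{ES3} (and to the corrections in \cite{ES5-corr}); no new idea is required beyond keeping careful track of $f$, $g$, the border loops, and the scalars $b_i c_\alpha^{-1}$. Everything else — exactness of $\bP_\bullet$, the injective-hull description, minimality — is already in place from Lemmas~\ref{lem:3.5}, \ref{lem:3.7} and Proposition~\ref{prop:3.6}.
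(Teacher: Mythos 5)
Your route is genuinely different from the paper's. The paper does not try to identify $S$ with the transposed differential of Lemma~\ref{lem:3.7}(ii); instead it (a) invokes the general fact (Theorem 2.4 and Proposition 9.1 of \cite{ES3}) that $\Omega_{\Lambda^e}^4(\Lambda) \cong {}_1\Lambda_{\sigma}$ for some automorphism $\sigma$, so that $\dim_K \Omega_{\Lambda^e}^4(\Lambda) = \dim_K \Lambda$, and (b) verifies directly that $S(\xi_t)=0$ for the dual-basis elements $\xi_t = \sum_{b\in\cB_t} b\otimes b^*$, the border terms dying because $\sum_{b} b(\alpha\otimes\alpha + e_i\otimes\alpha^2)b^* = 0$ etc.\ in characteristic $2$ (the identities of \cite{ES1}). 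Your plan — produce an isomorphism $\bP_2 \cong \bP$ intertwining $S$ with the map of Lemma~\ref{lem:3.7}(ii), so that $\Ker S \cong \theta(\Lambda)$ outright — would, if carried out, avoid the dimension count entirely, which is an attractive feature. But the step you yourself single out as ``the only real work'' contains a genuine error.

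The border corrections cannot be absorbed by an automorphism of $\bP_3$. All of $\psi_i^{(1)}+\psi_i^{(2)}+\psi_i^{(3)}$ lives in the single summand of $\bP_2$ indexed by $\overline{\alpha}$ (the one with generator $e_i\otimes e_i$, since $f(\alpha)=\alpha$), whereas precomposing $S$ with an automorphism $e_i\otimes e_i \mapsto e_i\otimes e_i + (\text{radical terms})$ of $\bP_3$ replaces $\bar\psi_i$ by $\sum x_k\bar\psi_i y_k$, which picks up nonzero contributions in the \emph{other} summands met by $\psi_i$ (e.g.\ a term $\bar{\alpha}\otimes r$ in the summand indexed by $g(\bar{\alpha})$); these do not cancel. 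What actually works is a unit change of generator on the $\bP_2$ side: in characteristic $2$ one checks that $\psi_i^{(1)}+\psi_i^{(2)}+\psi_i^{(3)} = (e_i\otimes\alpha + \alpha\otimes e_i)\cdot(\lambda\alpha+\lambda^2\alpha^2+\lambda^3\alpha^3)$ with $\lambda = b_ic_{\alpha}^{-1}$, so that $\bar\psi_i$ is the image of $\psi_i$ under the middle-multiplication automorphism $x\otimes y \mapsto x\otimes(e_i+\lambda\alpha)^{-1}y$ of the $\overline{\alpha}$-summand of $\bP_2$; one must then also verify that no $\bar\psi_j$ with $j\neq i$ meets that summand (true, since both the right- and left-multiplication terms landing there come only from the vertex $i$). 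Note that this identity uses $-1=1$: the sign pattern of $\psi_i$ forces $2\lambda\,\alpha\otimes\alpha = 0$, so the reduction to characteristic $2$ via Proposition~\ref{prop:2.3}(v) must be made explicit, which you do not do. Two further slips in the same paragraph: the index bijection matching the summands of $\bP_2$ and $\bP$ is $\alpha \mapsto f^2(\overline{\alpha})$, not $f(\overline{\alpha})$ (only then do $e_{s(\overline{\alpha})}\otimes e_{t(f(\overline{\alpha}))}$ and $e_{t(\gamma)}\otimes e_{s(\gamma)}$ agree); and the virtual-arrow substitutions need to be checked to respect this bijection, since $\bP$ is indexed by $(Q_{\Lambda})_1$ while the dropped terms of $\psi_i$ are governed by which of $\overline{\alpha}$, $f^2(\alpha)$ are virtual. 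The surrounding frame (Lemma~\ref{lem:3.5}, Proposition~\ref{prop:3.6}, Lemma~\ref{lem:3.7}, and the deduction of period exactly $4$ from the simple modules) is fine, but as written the central matching is not established.
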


\begin{proof}
We proceed as in the proof of
Theorem 7.4 of \cite{ES3},
and use
\cite[part (3) on  pages 119 and 120]{ES1}.
In particular, we fix some
basis
$\cB$ such that $\cB = \bigcup_{i, j \in Q_0} e_i\cB e_j$ of $\Lambda$, and let $\cB_i=e_i\cB$. 
For each $i$ we write $\omega_i$ for the socle element in $\cB_i$ of $e_i \Lambda$,
and consider
the symmetrizing form 
$(-,-) : {\Lambda} \times {\Lambda} \to K$ such that,
for any two elements $x \in \cB_i$ and $y \in \cB$,
we have
\[
  (x,y) = \mbox{\,the coefficient of $\omega_i$ in $x y$},
\]
when $x y$ is expressed
as a linear combination of the elements of $\cB_i$ over $K$.
Moreover, we consider the dual basis
$\cB^*$ of $\cB$ with respect to $(-,-)$.
Then, for each vertex $i \in Q_0$, we define the element of $\bP_3$
\[
  \xi_i = \sum_{b \in \cB_i} b \otimes b^* .
\]
Then we conclude as in the proof of
Theorem 7.4 of \cite{ES3}
that there is a monomorphism
in $\mod \Lambda^e$
\[
  \theta : {\Lambda} \to \bP_3
\]
such that
$\theta (e_i) = \xi_i$
for any $i \in Q_0$.
It follows also from
Theorem 2.4 of \cite{ES3}
and
Proposition 9.1 of \cite{ES3}
that
$\Omega_{\Lambda^e}^4(\Lambda) \cong {}_1\Lambda_{\sigma}$
in $\mod \Lambda^e$
for some $K$-algebra automorphism $\sigma$ of $\Lambda$.
Hence, we conclude that
$\dim_K \Lambda = \dim_K \Omega_{\Lambda^e}^4(\Lambda)$.
Moreover, by
Proposition~\ref{prop:3.6},
we have
$\Omega_{\Lambda^e}^4(\Lambda) = \Ker S$.
Therefore,
in order to show that $\theta$ induces an isomorphism
$\theta : \Lambda \to \Omega_{\Lambda^e}^4(\Lambda)$
in $\mod \Lambda^e$,
it remains to prove that
$S(\xi_t) = 0$ for any $t \in Q_0$.
Since $K$ has characteristic $2$,
applying
\cite[part (3) on pages 119 and 120]{ES1},
we conclude that
for any vertex $i \in \partial(Q,f)$
and the border loop $\alpha$ at $i$,
the following equalities hold in $\bP_2$
\begin{align*}
    \sum_{b \in \cB_t e_i} b (\alpha \otimes \alpha + e_i \otimes \alpha^2) b^* &= 0 , \\
    \sum_{b \in \cB_t e_i} b (\alpha \otimes \alpha^2 + e_i \otimes \alpha^3) b^* &= 0 , \\
    \sum_{b \in \cB_t e_i} b (\alpha \otimes \alpha^3) b^* &= 0 ,
\end{align*}
because $\alpha^4 = 0$.
Then, for any $t \in Q_0$,
we obtain the equalities
\begin{align*}
  S(\xi_t)
     &= \sum_{b \in \cB_t} S ( b \otimes b^* )
     = \sum_{b \in \cB_t} \sum_{j \in Q_0} S ( b e_j \otimes e_j b^* )
  \\&
     = \sum_{b \in \cB_t} \sum_{j \in Q_0} b S ( e_j \otimes e_j ) b^*
     = \sum_{b \in \cB_t} \sum_{j \in Q_0} b \bar{\psi}_j b^*
     = 0
     .
\end{align*}
This completes the proof that $\Lambda$ is a periodic algebra
of period $4$.
\end{proof}

\bigskip

\section{Proof of Theorem~\ref{th:main2}}
\label{sec:4}

Let $A$ be a basic, indecomposable, symmetric algebra with the Grothendieck
group $K_0(A)$ of rank at least $2$.
Assume that $A$ is socle equivalent to a weighted triangulation algebra
$\Lambda(Q,f,m_{\bullet},c_{\bullet})$.

Let $\Lambda=\Lambda(Q,f,m_{\bullet},c_{\bullet})=KQ/I$,
where $I=I(Q,f,m_{\bullet},c_{\bullet})$.
Since $A/\!\soc (A)$ is isomorphic to $\Lambda /\!\soc(\Lambda)$,
we can assume that these are equal, using an isomorphism
as identification.
We assume that $A$ is symmetric; 
therefore, for each $i \in Q_0$, the module $e_i A$ has
one-dimensional socle which is spanned by some $\omega_i \in e_i A e_i$,
and we fix such an element.
Then, let $\varphi$ be a symmetrizing linear form for $A$,
and then $\varphi(\omega_i)$ is non-zero.
We may assume that $\varphi(\omega_i) = 1$.

Observe also that
$\soc(A) \subset(\rad A)^2$.
If not, then for some $j \in Q_0$ we have $\omega_j \notin(\rad A)^2$.
This means that $e_j A = e_j A e_j$,
which is not possible because $A$ is indecomposable
with $K_0(A)$ of rank at least $2$.
It follows that $A$ and $\Lambda$ have the same Gabriel quiver.
Recall that the Gabriel quiver $Q_{\Lambda}$ of $\Lambda$
is obtained from $Q$ by removing all virtual arrows.
Moreover, for any virtual arrow $\alpha$ in $Q$,
we have in $\Lambda$ the equalities
$\bar{\alpha} f(\bar{\alpha}) = c_{\alpha} A_{\alpha} = c_{\alpha} {\alpha}$,
and this element belongs to the second socle of $\Lambda$, and hence
is a non-zero element of $A$.
We may therefore take $A$ of the form $K Q/L$ for the same quiver $Q$,
and some ideal $L$ of $K Q$ such that
$c_{\alpha} {\alpha} - \bar{\alpha} f(\bar{\alpha}) \in L$
for any virtual arrow $\alpha$ of $Q$.
Since $A_{\alpha}$ is a non-zero element of the socle of  $A/\!\soc (A)$,
we have 
$A_{\alpha}(\rad A)=\soc (e_i A)$,
where $i = s(\alpha)$.
We have also that $A_{\alpha}(\rad A)$ is spanned by 
$A_{\alpha}g^{-1}(\alpha)$
and 
$A_{\alpha}f(g^{-2}(\alpha))$.
We also recall that every $f$-orbit in $Q_1$ is of length $3$ or $1$.
Then, applying the arguments from the proof of 
\cite[Theorem~5.3]{ES8}, we may assume that
$A_{\alpha}g^{-1}(\alpha)\neq0$
and 
$A_{\alpha}f(g^{-2}(\alpha))=0$.
Hence we may assume that
$A_{\alpha}g^{-1}(\alpha) = B_{\alpha}$ in $A$
(and hence is equal to $B_{\alpha}$ in $K Q$).

We have the following relations in $A$:
\begin{enumerate}[\rm (1)]
 \item
	$\alpha f(\alpha) - c_{\bar{\alpha}}A_{\bar{\alpha}} = a_{\alpha}\omega_{s(\alpha)}$,
     	for all arrows
	$\alpha \in Q_1$,
 \item
	$\alpha f(\alpha) g(f(\alpha)) = \lambda_{\alpha}\omega_{s(\alpha)}$,
	for all arrows $\alpha \in Q_1$, except
	$f^2(\alpha)$ is virtual,
	or $f(\bar{\alpha})$ is virtual 
	and $m_{\bar{\alpha}} = 1$, $n_{\bar{\alpha}} = 3$,
 \item
	$\alpha g(\alpha)f(g(\alpha)) = \mu_{\alpha}\omega_{s(\alpha)}$, 
	for all arrows $\alpha \in Q_1$, except
	$f(\alpha)$ is virtual,
	or $f^2({\alpha})$ is virtual 
	and $m_{f(\alpha)} = 1$, $n_{f(\alpha)} = 3$,
\end{enumerate}
for some $a_{\alpha}, \lambda_{\alpha}, \mu_{\alpha} \in K$.

We observe that $a_{\alpha} = 0$ if $t(f(\alpha))\neq s(\alpha)$,
$\lambda_{\alpha} = 0$ if $t(g(f(\alpha)))\neq s(\alpha)$,
and $\mu_{\alpha} = 0$ if $t(f(g(\alpha)))\neq s(\alpha)$.
We also note that if $f(\bar{\alpha})$ is virtual and
$m_{\bar{\alpha}} = 1$, $n_{\bar{\alpha}} = 3$,
then $t(g(f(\alpha)))\neq s(\alpha)$.
Similarly, if $f^2(\alpha)$ is virtual and
$m_{f(\alpha)} = 1$, $n_{f(\alpha)} = 3$,
then $t(f(g(\alpha)))\neq s(\alpha)$.
We will show now that $\lambda_{\alpha} = 0$ and $\mu_{\alpha} = 0$
for the remaining $\alpha \in Q_1$ occurring in (2) and (3).

\smallskip

(a)
Assume that $\alpha$ is an arrow in $Q_1$ such that $f^2(\alpha)$
is not virtual and $t(f(g(\alpha)))= s(\alpha)$.
We note that $\alpha$ is not a loop, because $|Q_0| \geqslant 2$
and $f(\bar{\alpha})\neq g(\bar{\alpha})$.
Then $Q$ contains a subquiver of the form
\[
  \xymatrix@R=1pc@C=2.5pc{
   & i \ar[ld]_{\bar{\alpha}} \ar[rd]^{\alpha}  \\
   y  \ar[rd]_{f(\bar{\alpha})}
   && x  \ar[ld]^{f(\alpha)}   \\
   & j  \ar@<.5ex>[uu]^{f^2(\bar{\alpha})}  \ar@<-.5ex>[uu]_{f^2({\alpha})}
  } 
\]
with $f^2(\bar{\alpha}) = g(f(\alpha))$ and $f^2(\alpha) = g(f(\bar{\alpha}))$.
Then there are subpaths $p$ of $A_{\alpha}$ and $q$ of $A_{\bar{\alpha}}$
of length $\geqslant 1$ such that $A_{\alpha} = \alpha p f(\alpha)$
and $A_{\bar{\alpha}} = \bar{\alpha} q f(\bar{\alpha})$.
Since $t(f(\alpha)) =j= t(f(\bar{\alpha}))$ is different from
$s(\alpha) = i = s(\bar{\alpha})$, we obtain the equalities in $A$
\begin{align*}
 \alpha f(\alpha) g\big(f(\alpha)\big)
  &=  c_{\bar{\alpha}}A_{\bar{\alpha}} f^2(\bar{\alpha})
   = c_{\bar{\alpha}} \bar{\alpha} q f(\bar{\alpha})f^2(\bar{\alpha})
   = c_{\bar{\alpha}} c_{g(\bar{\alpha})} \bar{\alpha} q A_{g(\bar{\alpha})}
   = 0
\end{align*}
because $A_{g(\bar{\alpha})}$ is in the second socle of $A$.

Therefore, we have $\alpha f(\alpha) g(f(\alpha))=0$ in $A$
for all arrows $\alpha$ of $Q$ such that $f^2(\alpha)$ is not virtual.

\smallskip

(b)
Assume that $\alpha$ is an arrow in $Q$ such that $f(\alpha)$
is not virtual and $t(f(g(\alpha)))= s(\alpha)$.
Since $|Q_0| \geqslant 2$
we conclude that $\alpha$ is not a loop.
Then $Q$ contains a subquiver of the form
\[
  \xymatrix@R=1pc@C=2.5pc{
   & i \ar@<-.5ex>[dd]_{\bar{\alpha}} \ar@<.5ex>[dd]^{\alpha}  \\
   y  \ar[ru]^{f^2(\bar{\alpha})} 
   && x \ar[lu]_{f^2(\alpha)}   \\
   & j \ar[lu]^{f(\bar{\alpha})} \ar[ru]_{f(\alpha)} 
  } 
\]
with $g(\alpha) = f(\bar{\alpha})$ and $g(\bar{\alpha}) = f(\alpha)$.
Then there is a subpath $u$ of $A_{f(\alpha)}$
of length $\geqslant 1$ such that 
$A_{f(\alpha)} = f(\alpha) u f^2(\alpha)$.
We obtain the equalities in $A$
\begin{align*}
 \alpha g(\alpha) f\big(g(\alpha)\big)
  &=   \alpha f(\bar{\alpha}) f^2(\bar{\alpha})
   = c_{f(\alpha)} \alpha A_{f(\alpha)}
   =  c_{f(\alpha)} \alpha f(\alpha) u f^2({\alpha})
   =  c_{f(\alpha)}  c_{\bar{\alpha}}A_{\bar{\alpha}} u f^2({\alpha})
   = 0
\end{align*}
because $A_{\bar{\alpha}}$ is in the second socle of $A$.

Therefore, we have $\alpha g(\alpha) f(g(\alpha)) =0$ in $A$
for all arrows $\alpha$ of $Q$ such that $f(\alpha)$ is not virtual.

\smallskip

(c)
Assume that  $t(f(\alpha))= s(\alpha) \notin \partial(Q,f)$.
Then $Q$ contains a subquiver of the form
\[
 \xymatrix{
  i \ar@(dl,ul)[]^{\bar{\alpha}} \ar@<+.5ex>[r]^{\alpha}
   & j \ar@<+.5ex>[l]^{f(\alpha)}  
 }
\]
with 
$f(\bar{\alpha}) = \alpha$,
$f^2({\alpha}) = \bar{\alpha}$,
$g(\bar{\alpha}) = \bar{\alpha}$,
and
$g(f(\alpha)) = \alpha$.
Moreover, we have the equality 
$\alpha f(\alpha) - c_{\bar{\alpha}}A_{\bar{\alpha}} = a B_{\alpha}$
for some $a \in K$.
Observe also that $B_{\alpha} = \alpha A_{g(\alpha)}$.
Replacing $f(\alpha)$ by $f(\alpha)^* = f(\alpha) - a A_{g(\alpha)}$,
we obtain 
$\alpha f(\alpha)^* = c_{\bar{\alpha}}A_{\bar{\alpha}}$.
Further, we have 
$A_{g(\alpha)}=v g^{-1}(f(\alpha))f(\alpha)$
for a path $v$ of length $\geqslant 0$,
and $f(g^{-1}(f(\alpha)))f(\alpha) = g(\alpha)$ is not virtual.
This implies that
$A_{g(\alpha)}\bar{\alpha} = v g^{-1}(f(\alpha))f(\alpha)\bar{\alpha} = 0$.
Hence we conclude that
\begin{align*}
 f(\alpha)^* \bar{\alpha} 
  &= f(\alpha) \bar{\alpha}  - a A_{g(\alpha)}\bar{\alpha}
   = f(\alpha) \bar{\alpha} 
   = c_{g(\alpha)} A_{g(\alpha)} .
\end{align*}
We note that the replacement of $f(\alpha)$ by $f(\alpha)^*$
does not change $A_{g(\alpha)}$,
because $A_{g(\alpha)}$ is of length $\geqslant 2$
and belongs to the second socle of $A$.

\smallskip

(d)
Assume that $\alpha$ is a border loop of $(Q,f)$.
Then
$\alpha f(\alpha) = c_{\bar{\alpha}}A_{\bar{\alpha}} + b_{s(\alpha)}\omega_{s(\alpha)}$
for some $b_{s(\alpha)} \in K$.
Moreover, in this case we have 
$B_{\alpha} = B_{\bar{\alpha}}$ 
and
$c_{\alpha} = c_{\bar{\alpha}}$ ,
because $\bar{\alpha} = g(\alpha)$,
so we may take  $\omega_i = B_{\alpha}$.

\smallskip

Summing up, we conclude that we have the border function
$b_{\bullet} : \partial(Q,f) \to K$
such that the ideal $L$ contains the ideal 
$I(Q,f,m_{\bullet},c_{\bullet},b_{\bullet})$,
defining the algebra
$\bar{\Lambda} = \Lambda(S,\vec{T},m_{\bullet},c_{\bullet},b_{\bullet})$.
Moreover, we have
$\dim_K A = \dim_K A/\!\soc(A) + |Q_0|
 = \dim_K \Lambda/\!\soc(\Lambda) + |Q_0|
 = \dim_K \bar{\Lambda}$.
Therefore, $A$ is isomorphic to the algebra $\bar{\Lambda}$.

\section{Reduction to local algebras}
\label{sec:5}

For Theorem~\ref{th:main3}(i) it is necessary to show 
that a socle deformed  weighted surface algebra 
$\Lambda =  \Lambda(S,\vec{T},m_\bullet,c_\bullet,b_\bullet)$ 
with non-zero border  function $b_\bullet$ and at least 
three simple modules in $\mod \Lambda$ is  not isomorphic 
to a (non-deformed) weighted surface algebra 
$\bar{\Lambda} =  \Lambda(S,\vec{T},m_\bullet,c_\bullet)$. 
We will achieve this by showing  that 
the corresponding idempotent algebras of the form 
$e \Lambda e$  and $e \bar{\Lambda} e$ 
are not isomorphic.

Let $K$ be an algebraically closed field, $m \geqslant 2$ a natural
number, and $b \in K$, $c \in K^*$.
Consider the quotient algebra
\[
  A(m,c,b) = K \langle X,Y \rangle / I(m,b,c),
\]
where $I(m,b,c)$ is the ideal generated by the elements
\[
  X^2 - c (YX)^{m-1} Y - b (YX)^m, 
 \qquad
  Y^2, 
 \qquad
  (XY)^m = (YX)^m,
 \qquad
  (XY)^m X,
 \qquad
  (Y X)^m Y.
\]
Then $A(m,c,b)$ is a local symmetric algebra of dimension $4 m$,
with a canonical basis formed by $1$,
the alternating monomials
$(XY)^k$, $(Y X)^k$, $(XY)^k X$, $(Y X)^k Y$, $k \in \{0,\dots,m-1\}$,
and $(XY)^m = (YX)^m$
(see \cite[Theorem~III.1]{E}).

\begin{proposition}
\label{prop:5.1}
Let $K$ be of characteristic $2$, 
$m \geqslant 2$ a positive integer,
and $b,c\in K^*$.
Then the $K$-algebras $A(m,c,0)$ and $A(m,c,b)$
are not isomorphic.
\end{proposition}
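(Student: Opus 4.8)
The plan is to distinguish $A(m,c,0)$ from $A(m,c,b)$, for $b \in K^*$ and $\charact K = 2$, by exhibiting an algebra invariant that separates them. Since both algebras are local symmetric of dimension $4m$ with isomorphic radical-square quotients, the distinction must be subtle, so I would look for the finest available elementary invariant: the structure of the graded algebra associated with the radical filtration, or — more promisingly — the isomorphism type of $A/\!\soc(A)$ is the \emph{same} for both (this is exactly the socle equivalence of Proposition~\ref{prop:2.3}(ii)), so one needs something sensitive to the socle itself. My first move would therefore be to examine the square of a general degree-one element. Write a typical element of $\rad A$ modulo higher degree as $x = \lambda X + \mu Y$ with $\lambda,\mu \in K$; using the defining relations and the given canonical basis, compute $x^2$ in $A(m,c,b)$ and compare with $A(m,c,0)$.

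The key computation: in $A(m,c,b)$ one has $X^2 = c(YX)^{m-1}Y + b(YX)^m$ and $Y^2 = 0$, hence for $x = \lambda X + \mu Y$,
\[
  x^2 = \lambda^2 X^2 + \lambda\mu(XY + YX) + \mu^2 Y^2 = \lambda^2\big(c(YX)^{m-1}Y + b(YX)^m\big) + \lambda\mu(XY+YX),
\]
where I have used $\charact K = 2$ to drop no cross terms but to note $\mu^2 Y^2 = 0$. The point is to track the coefficient of the socle element $(XY)^m = (YX)^m$ in $x^2$ and in the higher powers $x^3, x^4, \dots$, or better, to look at the ideal $(x)$ and its intersection with $\soc A$. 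In $A(m,c,0)$ the element $X^2$ lies in $\rad^{2m-1}$ but not in $\rad^{2m} = \soc$; specifically $X^2 = c(YX)^{m-1}Y$ has a nonzero image in $\rad^{2m-1}/\rad^{2m}$. In $A(m,c,b)$ the same formula holds, so the radical-layer data alone will not separate them — confirming that a coarser invariant fails and motivating a more refined one.

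The refined invariant I would use: consider the set of elements $y \in \rad A$ with $y^2 \in \soc A$, equivalently $y^2 \in \rad^{2m}A$. In $A(m,c,0)$, writing $y = \lambda X + \mu Y + (\text{higher})$, the condition $y^2 \in \soc$ forces $\lambda^2 c (YX)^{m-1}Y \in \soc$, i.e. $\lambda = 0$ (since $m \geq 2$ makes $2m-1 < 2m$), so such $y$ lie in $KY + \rad^2$. In $A(m,c,b)$ one cannot similarly conclude: I would instead count, or describe the variety of, square-zero or square-in-socle elements, and show the dimension or the defining equations differ. Concretely, I expect the cleanest route is: show that in $A(m,c,0)$ there is a degree-one element $z$ with $z^2 = 0$ outside $KY$ is impossible, whereas in $A(m,c,b)$ the element $z = X + t\,(\text{suitable monomial})$ can be arranged with $z^2 \in \soc$ having a \emph{prescribed} socle coefficient, and the resulting quadratic form on $\rad/\rad^2$ valued in $\soc$ has different rank. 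Alternatively, and perhaps more robustly, I would argue by contradiction: suppose $\phi : A(m,c,b) \to A(m,c,0)$ is an isomorphism; then $\phi$ must send the distinguished generators to elements with the same relations, and by a change-of-generators normalization (absorbing scalars and higher-order corrections, exactly as in step (c)--(d) of the proof of Theorem~\ref{th:main2}) one reduces $\phi(X), \phi(Y)$ to $X + (\text{higher}), Y + (\text{higher})$; then matching $\phi(X)^2$ against $X^2$ in $A(m,c,0)$ produces the equation $b = 0$ in $K$, a contradiction.

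\textbf{Main obstacle.} The hard part will be controlling the change of generators: an isomorphism need not send $X \mapsto X$, $Y \mapsto Y$ on the nose, and the corrections by higher-degree terms could in principle conspire to reintroduce or cancel the $b$-term. I would handle this by working modulo successive radical powers, showing inductively that any isomorphism can be adjusted (by composing with inner automorphisms and rescaling) to fix the images of $X$ and $Y$ up to terms in $\rad^{\geq 3}$, and that terms of degree $\geq 3$ contribute to $\phi(X)^2$ only in degrees $> 2m-1$ or are themselves determined; the surviving obstruction in degree $2m$ (the socle) is exactly $b$, which is then forced to vanish. Care is needed because $\charact K = 2$ kills the usual $2\lambda\mu$ cross-term bookkeeping, so the monomials $XY$ and $YX$ must be tracked separately rather than symmetrized.
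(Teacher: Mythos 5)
Your final strategy --- assume an isomorphism exists, expand the images of the generators in the canonical basis, and extract a contradiction from the coefficient of the socle element $(XY)^m$ in the squared relations --- is indeed the strategy of the paper's proof. But as written the argument has a genuine gap, and it sits exactly where you flag your ``main obstacle'': the claim that one can normalize an isomorphism so that $\phi(X)=X+(\text{higher})$ and $\phi(Y)=Y+(\text{higher})$. Nothing forces the induced map on $\rad/\rad^2$ to be diagonal; a priori $\phi(Y)=r_0X+s_0Y+\cdots$ with $r_0\neq 0$ is possible, and ruling this out is not a harmless normalization but the first half of the actual proof. The paper shows that the coefficient of $(XY)^m$ in $\phi(Y)^2$ equals $r_0^2b$ (all cross terms cancel in pairs because $\charact K=2$), so $\phi(Y)^2=0$ forces $r_0=0$ --- note this step already uses $b\neq 0$ in an essential way, so the normalization you want cannot be established independently of the $b$-term. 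Only after $r_0=0$ (hence $s_0\neq 0$, $a_0\neq 0$, $b_0=0$) does the comparison of socle coefficients in $\phi(X)^2$ versus $c\bigl(\phi(Y)\phi(X)\bigr)^{m-1}\phi(Y)$ yield $a_0^2b=0$, the contradiction.

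Two further points you would need to supply. First, the cancellation of the cross terms in the socle coefficient is not automatic: the paper must first derive the auxiliary identities $t_1=u_1$ (from the coefficients of $XYX$ and $YXY$ in $\phi(Y)^2=0$) and, when $m$ is even, $t_{m/2}=u_{m/2}$, before the coefficient of $(XY)^m$ collapses to $r_0^2b$ resp.\ $a_0^2b$; your remark that ``the corrections could in principle conspire'' is exactly right, and the conspiracies are killed by these identities, not by a general inductive adjustment modulo radical powers. Second, the case $m=2$ does not follow from the generic computation (the argument giving $a_0^2b$ as the socle coefficient of $\phi(X)^2$ needs $m\geq 3$) and requires a separate explicit calculation, which the paper carries out. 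Your earlier ``refined invariant'' (elements with square in the socle) does not separate the two algebras, since the leading-order condition $\lambda=0$ is identical in both; as you half-observe, only the socle coefficient itself distinguishes them, which is why the coefficient-chasing under a hypothetical isomorphism is unavoidable.
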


\begin{proof}
Suppose that there is an isomorphism of $K$-algebras
$h : A(m,c,0) \to A(m,c,b)$.
Then $h$ is given by the elements
$h(X)=F(X,Y)$ and $h(Y)=G(X,Y)$ in $A(m,c,b)$ of the forms
\begin{align*} 
 F(X,Y)
  &= \sum_{i=0}^{m-1} a_i (XY)^i X
  + \sum_{i=0}^{m-1} b_i (Y X)^i Y
  + \sum_{i=1}^{m-1} c_i (XY)^i
  + \sum_{i=1}^{m-1} d_i (Y X)^i
,
\\
 G(X,Y)
  &= \sum_{i=0}^{m-1} r_i (XY)^i X
  + \sum_{i=0}^{m-1} s_i (Y X)^i Y
  + \sum_{i=1}^{m-1} t_i (XY)^i
  + \sum_{i=1}^{m-1} u_i (Y X)^i
\end{align*}
for some 
$a_i, b_i, c_i, d_i, r_i, s_i, t_i, u_i \in K$ with
$a_0 s_0 \neq b_0 r_0$.
Moreover, since $X^2 = c(Y X)^{m-1} Y$ 
and $Y^2 = 0$ in $A(m,c,0)$, we have 
\[
 F(X,Y)^2 = c\big(G(X,Y)F(X,Y)\big)^{m-1} G(X,Y)
\qquad
 \mbox{ and }
\qquad
 G(X,Y)^2 = 0
 .
\]
Since $m \geqslant 2$,
$XY$, $YX$, $XYX$, $YXY$
are independent elements of $A(m,c,b)$.
The coefficients of $X Y$ and $Y X$
in $F(X,Y)^2$ (respectively, $G(X,Y)^2$)
are equal to $a_0 b_0$ (respectively, $r_0 s_0$).
Hence we have $a_0 b_0 = 0$ and $r_0 s_0 = 0$.
Further, the coefficients of $XYX$ and $YXY$ in $G(X,Y)^2=0$
are equal $r_0(t_1+u_1)$ and $s_0(t_1+u_1)$.
Because $G(X,Y)^2=0$,
we obtain $r_0(t_1+u_1)=0$ and $s_0(t_1+u_1)=0$.
The imposed condition $a_0s_0 \neq b_0 r_0$
forces $r_0 \neq 0$ or $s_0 \neq 0$.
Hence we get $t_1+u_1=0$, or equivalently $t_1=u_1$,
because $K$ is of characteristic $2$.

We claim that the coefficient of $(XY)^m = (YX)^m$ in $G(X,Y)^2$
is equal to $r_0^2b$.
We consider two cases.

\smallskip

(1)
Assume $m$ is odd.
Then the coefficient of $(XY)^m = (YX)^m$ in $G(X,Y)^2$
is equal to
\[
  r_0^2b + \sum_{i+j+1=m} r_i s_j + \sum_{i+j+1=m} s_i r_j
   = r_0^2b ,
\]
because $K$ is of characteristic $2$ and
$X^2 = c(YX)^{m-1} Y + b(YX)^m$ in $A(m,c,b)$.

\smallskip

(2)
Assume $m$ is even, say $m = 2 k$ for some positive natural number $k$.
Then the coefficient of $(XY)^m = (YX)^m$ in $G(X,Y)^2$
is equal to
\[
  r_0^2b + \sum_{i+j+1=m} r_i s_j + \sum_{i+j+1=m} s_i r_j  + t_k^2 + u_k^2
   = r_0^2b + (t_k + u_k)^2 .
\]
Moreover, the coefficient of $(XY)^k X=X(Y X)^k$ 
(respectively, $(Y X)^k Y=Y (X Y)^k$) in $G(X,Y)^2$
is equal to $r_0 (t_k + u_k)$ (respectively, $s_0 (t_k + u_k)$).
Since $G(X,Y)^2=0$
and $r_0 \neq 0$ or $s_0 \neq 0$, we get
$t_k + u_k = 0$, and hence $(t_k + u_k)^2 = 0$.

Therefore, the coefficient of $(XY)^m = (YX)^m$ in $G(X,Y)^2 = 0$
is equal to $r_0^2b$, and hence $r_0 = 0$, because $b\neq0$.
In particular, we have $s_0 \neq 0$.
Then the conditions $a_0 s_0 \neq b_0 r_0$ and $a_0 b_0 = 0$ 
imply $a_0 \neq 0$ and $b_0 = 0$.

Using $b_0 = 0$ and $r_0 = 0$, we conclude that 
the coefficient of $(XY)^m = (YX)^m$ in $(G(X,Y)F(X,Y))^{m-1} G(X,Y)$
is equal to
\[
  (s_0 a_0)^{m-1} u_1 + a_0 t_1 (a_0 s_0)^{m-2} s_0
  = (s_0 a_0)^{m-1} (t_1 + u_1) = 0 ,
\]
because $t_1 = u_1$.
Hence, by the equality
$F(X,Y)^2 = c(G(X,Y)F(X,Y))^{m-1} G(X,Y)$,
we obtain that the coefficient of $(XY)^m = (YX)^m$ in $F(X,Y)^2$
is equal to $0$.

Now, if $m \geq 3$, then applying arguments as in (1) and (2),
we conclude that the coefficient of $(XY)^m = (YX)^m$ in $F(X,Y)^2$
is equal to $a_0^2 b$, which is a contradiction
because $a_0\neq0$ and $b\neq0$.

Finally, assume that $m=2$.
Then we have the equality
$F(X,Y)^2 = c G(X,Y)F(X,Y) G(X,Y)$
with $F(X,Y)$ and $G(X,Y)$ of the forms
\begin{align*} 
 F(X,Y)
  &= a_0 X + a_1 X Y X + b_1 Y X Y + c_1 X Y + d_1 Y X + d_2 (Y X)^2
,
\\
 G(X,Y)
  &= s_0 Y + r_1 X Y X + s_1 Y X Y + t_1 X Y + u_1 Y X + u_2 (Y X)^2
,
\end{align*}
because $b_0 = 0$ and $r_0 = 0$.
Then
\begin{align*} 
 F(X,Y)^2
  &= a_0 (c_1+d_1) X Y X + a_0^2 c Y X Y + ( a_0^2 b + c_1^2 + d_1^2) (X Y)^2
,
\\
c G(X,Y)F(X,Y)G(X,Y)
  &= c a_0 s_0^2 Y X Y +  c a_0 s_0 (t_1 + u_1)  (X Y)^2
  = a_0 s_0^2 c Y X Y 
,
\end{align*}
because $t_1 + u_1 = 0$.
Hence we obtain the equalities
\[
 a_0 (c_1 + d_1) = 0,
 \qquad
 a_0^2 c = a_0 s_0^2 c,
 \qquad
 a_0^2 b + (c_1 + d_1)^2 = 0.
\]
Since $a_0 \neq 0$, we have $c_1 + d_1 = 0$.
But then $a_0^2 b = 0$,
which is again a contradiction.

This finishes the proof.
\end{proof}

\begin{proposition}
\label{prop:5.2}
Let $\Lambda = \Lambda(Q,f,m_{\bullet},c_{\bullet},b_{\bullet})$
be a socle deformed weighted triangulation algebra,
$\alpha$ a border loop of $(Q,f)$, 
$m = m_{\cO(\alpha)}$, 
$c = c_{\cO(\alpha)}$, 
$b = b_{s(\alpha)}$.
Assume that $m \geqslant 2$
and $f(\bar{\alpha})$ is not a virtual loop.
Then the local algebra
$A = e_{s(\alpha)} \Lambda e_{s(\alpha)}$
is isomorphic to $A(m,c,b)$.
\end{proposition}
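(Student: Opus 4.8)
The plan is to exhibit two elements $X,Y$ of $A := e_{s(\alpha)}\Lambda e_{s(\alpha)}$ which generate $A$ and satisfy the defining relations of $A(m,c,b)$, and then to conclude by a dimension count. Write $i = s(\alpha)$ and let $\bar{\alpha} = g(\alpha)$ be the second arrow at $i$. Since $\alpha$ is a border loop, $\cO(\alpha) = \cO(\bar{\alpha})$, so $n := n_{\alpha} = |\cO(\alpha)|$ satisfies $g^{n-1}(\bar{\alpha}) = \alpha$; moreover $m \geq 2$ forces $m_{\gamma}n_{\gamma} = mn \geq 4$ for all $\gamma \in \cO(\alpha)$, so no arrow of $\cO(\alpha)$ is virtual and $A_{\bar{\alpha}}$, $B_{\bar{\alpha}}$ and the $g$-subpaths used below are genuine paths of $Q_{\Lambda}$. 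I also record that $n \geq 3$: if $n = 2$ then $g(\bar{\alpha}) = \alpha$, which forces $f(\bar{\alpha}) = \bar{\alpha}$ and hence $\bar{\alpha}$ to be a loop at $i$, so $i$ would carry two loops and $Q$ would reduce to a single vertex, impossible for a triangulation quiver; and since $i$ cannot carry two loops, $\bar{\alpha}$ is not a loop, so its $f$-orbit has length $3$.

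Next I would set
\[
  X := \alpha, \qquad
  Y := \bar{\alpha}\, g(\bar{\alpha}) \cdots g^{n-2}(\bar{\alpha})
     = g(\alpha)\, g^{2}(\alpha) \cdots g^{n-1}(\alpha)\ \in\ A,
\]
a path of length $n-1 \geq 1$. A direct bookkeeping along the $g$-cycle of $\alpha$ shows that, for $0 \leq k \leq m-1$, the element $(XY)^{k}$ (resp. $(XY)^{k}X$) is the initial subpath of $A_{\alpha}$ of length $kn$ (resp. $kn+1$), and $(YX)^{k}$ (resp. $(YX)^{k}Y$) is the initial subpath of $A_{\bar{\alpha}}$ of length $kn$ (resp. $(k+1)n-1$); in particular $(YX)^{m-1}Y = A_{\bar{\alpha}}$, $(XY)^{m} = \alpha A_{\bar{\alpha}} = B_{\alpha}$ and $(YX)^{m} = A_{\bar{\alpha}}\alpha = B_{\bar{\alpha}}$. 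Together with $e_{i}$ these are exactly the elements of the canonical $K$-basis of $\Lambda$ (as described in \cite{ES3}, \cite{ES5}) that lie in $A$; hence they are linearly independent, $X$ and $Y$ generate $A$, and $\dim_{K} A = 4m = \dim_{K} A(m,c,b)$.

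The heart of the argument is then to check that $X$ and $Y$ satisfy the relations defining $A(m,c,b)$. The element $X^{2} - c(YX)^{m-1}Y - b(YX)^{m}$ equals $\alpha^{2} - cA_{\bar{\alpha}} - bB_{\bar{\alpha}}$, which is zero in $\Lambda$ by relation~(2) of Definition~\ref{def:socdef} for the border loop $\alpha$ (recall $c_{\bar{\alpha}} = c_{\cO(\alpha)} = c$ and $b_{s(\alpha)} = b$). Since $B_{\alpha},B_{\bar{\alpha}} \in \soc(e_{i}\Lambda)$ are annihilated on the right by $\rad\Lambda$, we get $(XY)^{m}X = B_{\alpha}\alpha = 0$ and $(YX)^{m}Y = B_{\bar{\alpha}}Y = 0$; and multiplying $\alpha^{2} = cA_{\bar{\alpha}} + bB_{\bar{\alpha}}$ by $\alpha$ on the left and on the right, using $\alpha B_{\bar{\alpha}} = B_{\alpha}\alpha = 0 = B_{\bar{\alpha}}\alpha$, yields $cB_{\alpha} = \alpha^{3} = cB_{\bar{\alpha}}$, so that $(XY)^{m} = B_{\alpha} = B_{\bar{\alpha}} = (YX)^{m}$ because $c \neq 0$. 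It remains to show $Y^{2} = 0$: in the monomial $Y^{2}$ the only turning point is $g^{n-1}(\alpha)\,g(\alpha) = g^{n-1}(\alpha)\,f(g^{n-1}(\alpha))$, which one rewrites by relation~(1) of Definition~\ref{def:socdef} (valid since $g^{n-1}(\alpha)$ is not a border loop, the $f$-orbit of $\bar{\alpha}$ having length $3$); one then uses the relation $\bar{\alpha}f(\bar{\alpha}) = c_{\alpha}A_{\alpha}$ and the fact that $A_{\gamma}\rad\Lambda \subseteq \soc\Lambda$ for every arrow $\gamma$ to see that the resulting expression collapses into $\soc(e_{i}\Lambda)$ and hence vanishes.

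Once all relations are verified there is a surjective $K$-algebra homomorphism $A(m,c,b) \to A$, $X \mapsto \alpha$, $Y \mapsto Y$ (surjective because the images of the canonical basis of $A(m,c,b)$ span $A$ by the first step), and it is an isomorphism by the dimension count. The main obstacle I expect is the vanishing $Y^{2} = 0$ in full generality: it needs a short case analysis (according to whether $n = 3$ or $n \geq 4$, and whether the arrows $f(\bar{\alpha})$ and $\overline{g^{n-1}(\alpha)}$ are virtual) together with the substitution rules for virtual arrows in $Q_{\Lambda}$, and it is exactly in the subcase $n = 3$ with $f(\bar{\alpha})$ virtual that the hypothesis ``$f(\bar{\alpha})$ is not a virtual loop'' is used — it forces $f(\bar{\alpha})$ to be a non-loop, which makes the offending factor $\bar{\alpha}f(\bar{\alpha})g(\bar{\alpha})$ vanish for the trivial reason that $f(\bar{\alpha})g(\bar{\alpha})$ is not a path.
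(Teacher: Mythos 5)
Your overall strategy is exactly the paper's: take $X=\alpha$ and $Y=g(\alpha)\cdots g^{n-1}(\alpha)$, verify the defining relations of $A(m,c,b)$, and conclude by counting dimensions. The verifications of $X^2=c(YX)^{m-1}Y+b(YX)^m$, of $(XY)^mX=0=(YX)^mY$, and your explicit derivation of $(XY)^m=(YX)^m$ from $\alpha^3=cB_{\alpha}=cB_{\bar{\alpha}}$ (a point the paper leaves implicit) are all correct, as is the $4m$-dimensional count.

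The gap is the step you yourself flag, $Y^2=0$, and your sketch of it would not work as written. The junction in $Y^2$ is $\sigma f(\sigma)g(f(\sigma))$ with $\sigma=g^{n-1}(\alpha)$ and $f(\sigma)=\bar{\alpha}$, and the correct case split (for arbitrary $n\geqslant 3$, not just $n=3$) is on whether $f^2(\sigma)=f(\bar{\alpha})$ is virtual: if it is not, the zero relation of type (3) for $\sigma$ kills $Y^2$ outright; if it is, one substitutes $\sigma f(\sigma)=c_{\bar{\sigma}}A_{\bar{\sigma}}=c_{\bar{\sigma}}\bar{\sigma}$ and then applies the type (3) relation to $\bar{\sigma}$. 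Your localisation of the hypothesis to ``the subcase $n=3$ with $f(\bar{\alpha})$ virtual'' has the roles reversed: when $n=3$ one has $\overline{f(\bar{\alpha})}=g(\bar{\alpha})=g^2(\alpha)=\sigma=f(f(\bar{\alpha}))$, which forces $f(\bar{\alpha})$ to be a loop, so under the hypothesis that subcase is simply vacuous; the case that actually needs work is $n\geqslant 4$ with $f(\bar{\alpha})$ a virtual non-loop, and there the hypothesis is used to guarantee the non-loop shape of the local subquiver. Moreover the monomial $\bar{\alpha}f(\bar{\alpha})g(\bar{\alpha})$ you name never occurs in $Y^2$ (the non-composable pair one can reach after two substitutions is $g^{-1}(\sigma)\cdot f(\bar{\alpha})$, not $f(\bar{\alpha})\cdot g(\bar{\alpha})$), and ``collapses into $\soc(e_i\Lambda)$ and hence vanishes'' is not a valid inference, since the socle is non-zero and for $n=3$ the path length $2n-2$ of $Y^2$ coincides with that of the basis element $(XY)X$. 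So the argument is recoverable, but the one genuinely delicate verification is the one left undone, and the hint given for completing it points at the wrong case and the wrong cancellation.
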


\begin{proof}
Let $i=s(\alpha)$.
Since $f(\alpha) = \alpha$ and $|Q_0| \geqslant 2$,
we have $\bar{\alpha} = g(\alpha)$ and $n_{\alpha} \geqslant 3$.
Let $X=\alpha$ and $Y=g(\alpha)\dots g^{n_{\alpha}-1}(\alpha)$.
Then
\[
 X^2 = \alpha^2 
  = c_{\bar{\alpha}} A_{\bar{\alpha}} +b_i B_{\bar{\alpha}}
  = c A_{g(\alpha)} + b B_{g(\alpha)} 
  = c (Y X)^{m-1} Y + b (Y X)^m 
  .
\]

We claim that $Y^2 = 0$.
Let 
$\beta = g(\alpha)$,
$\gamma = f(\beta)$,
and $\sigma = f(\gamma)$.
We note that $\beta, \gamma, \sigma$ are pairwise different.
We consider two cases.

\smallskip

(1)
Assume $f^2(\sigma)=\gamma$ is not virtual. 
Then we have
\[
  Y^2=\big(g(\alpha)\dots g^{n_{\alpha}-1}(\alpha)\big)^2
   = g(\alpha)\dots \sigma f(\sigma) 
      g\big(f(\sigma)\big) \dots g^{n_{\alpha}-1}(\alpha)
   = 0
   .
\]

\smallskip

(2)
Assume $f^2(\sigma)=\gamma$ is virtual. 
Then, by assumption, $\gamma$ is not a loop, and 
$(Q,f)$ contains a subquiver of the form
\[
  \xymatrix@R=1pc{
   & j \ar[rd]^{\delta} \ar@<-.5ex>[dd]_{\gamma}  \\
   i   \ar@(ld,ul)^{\alpha}[]  \ar[ru]^{\beta} 
   && r  \ar[ld]^{\omega}   \\
   & k \ar[lu]^{\sigma}  \ar@<-.5ex>[uu]_{\varepsilon}
  } 
\]
with $\delta=f(\varepsilon)$, $\omega = f(\delta)$, $\varepsilon = f(\omega)$,
and we have
\[
  Y^2=\big(g(\alpha)\dots g^{n_{\alpha}-1}(\alpha)\big)^2
   = g(\alpha)\dots \sigma \beta \delta g(\delta) \dots g^{n_{\alpha}-1}(\alpha)
   = c_{\varepsilon} g(\alpha)\dots \varepsilon
       \delta g(\delta) \dots g^{n_{\alpha}-1}(\alpha)
   = 0
   ,
\]
because $\delta = f(\varepsilon)$,
and $f^2(\varepsilon)=\omega$
is not virtual.

Finally, we observe that
\begin{align*} 
  (XY)^m X
  &= \big(\alpha g(\alpha)\dots g^{n_{\alpha}-1}(\alpha)\big)^{m_{\alpha}}\alpha 
    = B_{\alpha} \alpha = 0 
,
\\
  (Y X)^m Y
  &= \big(g(\alpha)\dots g^{n_{\alpha}-1}(\alpha) \alpha\big)^{m_{\alpha}}
        g(\alpha)\dots g^{n_{\alpha}-1}(\alpha)
     = B_{g(\alpha)} g(\alpha)\dots g^{n_{\alpha}-1}(\alpha)
     = 0 
.
\end{align*}
\end{proof}

\section{Reduction to algebras with two vertices}
\label{sec:6} 
This deals with technical reductions 
towards the proof of Theorem~\ref{th:main3}. 

Let $r \geqslant 1$ be a natural number and $b\in K$, $c \in K^*$.
Consider the algebra $B(r,c,b)$
given by the quiver $Q$
\[
  \xymatrix{
    1
    \ar@(ld,ul)^{\alpha}[]
    \ar@<.5ex>[r]^{\beta}
    & 2
    \ar@<.5ex>[l]^{\omega}
    \ar@(ru,dr)^{\varrho}[]
  } 
\]
and the relations:
\begin{align*}
&&
  \alpha^2 &= c \beta \omega + b \beta \omega \alpha,&
  \alpha \beta \omega & = \beta \omega \alpha,&
  \omega \alpha \beta &= \varrho^r, &
  \omega \beta &= 0, &
  \beta \varrho &=0, &
  \varrho \omega &= 0.
&&
\end{align*}
We observe that we have the following additional relations:
\begin{align*}
   \beta \omega \alpha \beta & = \beta \varrho^r = 0, 
\\
  \alpha^2 \beta &= c \beta \omega \beta + b \beta \omega \alpha \beta = 0,
\\
  \omega \alpha^2 &= c \omega \beta \omega + b \omega \beta \omega \alpha = 0,
\\
  \alpha \beta \omega \alpha &= \beta \omega  \alpha^2  =
	c\beta \omega \beta  \omega
      + b \beta \omega \beta \omega \alpha = 0, 
\\
  c \beta \omega \alpha &= \alpha^3 =  c \alpha \beta \omega  .
\end{align*}
In particular, $B(r,c,b)$ has a basis
\[
e_1,\, e_2,\, 
\alpha,\, \alpha \beta,\, \beta,\, \beta \omega,\, 
\omega,\, \omega \alpha,\,
\varrho,\,\dots,\,\varrho^{r-1},\,\,
\varrho^r=\omega\alpha\beta,\,\, \beta\omega\alpha.
\]

\begin{lemma}
\label{lem:6.1}
Let $K$ be of characteristic $2$, 
$r \geqslant 1$ a natural number,
and $b,c\in K^*$.
Then the $K$-algebras $B(r,c,0)$ and $B(r,c,b)$
are not isomorphic.
\end{lemma}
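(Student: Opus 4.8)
The plan is to follow the strategy of the proof of Proposition~\ref{prop:5.1}. Suppose, for a contradiction, that there is a $K$-algebra isomorphism $h\colon B(r,c,0)\to B(r,c,b)$. I would first determine what $h$ can do to the arrows $\alpha,\beta,\omega$, then substitute these into the relations of $B(r,c,0)$, evaluate the resulting identities inside $B(r,c,b)$, and extract a contradiction from $\charact K=2$ together with $b\neq 0$.

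The first step is to pin down the effect of $h$ on the vertices. After composing $h$ with an inner automorphism we may assume that $h$ permutes $\{e_1,e_2\}$; I claim it fixes them. Indeed, a direct inspection of the displayed basis shows that in $B(r,c,b)$ the subspace $(e_1 B(r,c,b) e_2)(e_2 B(r,c,b) e_1)$ is $2$-dimensional, spanned by $\beta\omega$ and $\beta\omega\alpha$, whereas $(e_2 B(r,c,b) e_1)(e_1 B(r,c,b) e_2)$ is $1$-dimensional, spanned by $\varrho^r=\omega\alpha\beta$, and the analogous subspaces of $B(r,c,0)$ have the same dimensions $2$ and $1$. An isomorphism interchanging $e_1$ and $e_2$ would carry the first of these subspaces onto the second, which is impossible. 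Hence $h(e_i)=e_i$ for $i=1,2$, so $h$ preserves each corner $e_i(\rad B(r,c,b))e_j$; reading this off the basis, and using that $h$ induces an automorphism of $\rad/\rad^2$, we obtain
\begin{align*}
 h(\alpha) &= a_1\alpha + a_2\beta\omega + a_3\beta\omega\alpha, \\
 h(\beta) &= q_1\beta + q_2\alpha\beta, \\
 h(\omega) &= t_1\omega + t_2\omega\alpha,
\end{align*}
for scalars with $a_1,q_1,t_1\in K^*$ (note that $h(\varrho)$ will not be needed).

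Now I would impose the relations of $B(r,c,0)$. Applying $h$ to $\omega\beta=0$ and simplifying in $B(r,c,b)$ with the additional relations listed before the lemma (in particular $\omega\alpha^2=0$) gives $h(\omega)h(\beta)=(t_1q_2+t_2q_1)\varrho^r$, whence $t_1q_2+t_2q_1=0$ since $\varrho^r=\omega\alpha\beta$ is a nonzero basis element. Expanding $h(\beta)h(\omega)$ and using $\alpha\beta\omega=\beta\omega\alpha$ and $\alpha\beta\omega\alpha=0$, the cross term $(q_1t_2+q_2t_1)\beta\omega\alpha$ vanishes and one is left with $h(\beta)h(\omega)=q_1t_1\,\beta\omega$. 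Expanding $h(\alpha)^2$ and using $\alpha^2=c\beta\omega+b\beta\omega\alpha$ together with the vanishing monomial relations $\alpha\beta\omega\alpha=0$, $\beta\omega\alpha^2=0$, $\beta\omega\alpha\beta=0$, $\omega\beta=0$, and $\charact K=2$ (which kills the term $2a_1a_2\,\beta\omega\alpha$), one obtains $h(\alpha)^2=a_1^2c\,\beta\omega+a_1^2b\,\beta\omega\alpha$. Since $\alpha^2=c\beta\omega$ in $B(r,c,0)$, applying $h$ yields the identity $a_1^2c\,\beta\omega+a_1^2b\,\beta\omega\alpha=c\,q_1t_1\,\beta\omega$ in $B(r,c,b)$. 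As $\beta\omega$ and $\beta\omega\alpha$ are linearly independent basis elements, comparing coefficients gives $a_1^2c=cq_1t_1$ and $a_1^2b=0$; since $a_1\neq 0$ and $b\neq 0$ this is a contradiction, which completes the proof.

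I expect the only real point of care to be the vertex-identification step — ruling out that $h$ interchanges $e_1$ and $e_2$ — and, once past that, the bookkeeping of the several extra monomial relations when expanding $h(\alpha)^2$ and $h(\beta)h(\omega)$; there is no genuine obstacle, and the argument is considerably shorter than that of Proposition~\ref{prop:5.1}.
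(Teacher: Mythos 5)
Your proof is correct and follows essentially the same route as the paper's: reduce to an isomorphism fixing the idempotents, expand $h(\alpha)$, $h(\beta)$, $h(\omega)$ in the given basis, and compare the coefficients of $\beta\omega$ and $\beta\omega\alpha$ in $h(\alpha)^2 = c\,h(\beta)h(\omega)$ against the constraint $t_1q_2+t_2q_1=0$ coming from $h(\omega)h(\beta)=0$, forcing $a_1^2b=0$ and hence a contradiction. The one point where you go beyond the paper is the justification that $h$ cannot interchange $e_1$ and $e_2$ (via the asymmetry between the subspaces $(e_1Be_2)(e_2Be_1)$ and $(e_2Be_1)(e_1Be_2)$), a step the paper's proof simply assumes; this is a welcome addition rather than a divergence.
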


\begin{proof}
Assume there exists an isomorphism of $K$-algebras
$h : B(r,c,0) \to B(r,c,b)$ such that 
$h(e_1) = e_1$ and $h(e_2) = e_2$.
Then we have
\begin{align*}
h(\alpha) &= r_1 \alpha + r_2 \beta \omega + r_3 \beta \omega \alpha,
\\
h(\beta) &= s_1 \beta + s_2 \alpha \beta,
\\
h(\omega) &= t_1 \omega + t_2 \omega \alpha,
\\
h(\varrho) &= \sum_{i=1}^r u_i \varrho^i ,
\end{align*}
for some $r_1,s_1,t_1,u_1\in K^*$ and $r_2,r_3,s_2,t_2,u_2,\dots,u_r\in K$.
Since $\alpha^2 = c \beta \omega$ in $B(r,c,0)$, we have
\[
  h(\alpha)^2 = h(\alpha^2) = h(c\beta\omega) = c h(\beta) h(\omega)
\]
in $B(r,c,b)$.
Since $K$ is of characteristic $2$, 
using the relations in $B(r,c,b)$ described above, we obtain that
\begin{align*}
  h(\alpha)^2 
  &= \big(r_1 \alpha + r_2 \beta \omega + r_3 \beta \omega \alpha \big)^2
  = r_1^2 c  \beta \omega +  r_1^2 b  \beta \omega \alpha
,
\\
c h(\beta) h(\omega) &=
  c (s_1 \beta + s_2 \alpha \beta) (t_1 \omega + t_2 \omega \alpha)
  = c s_1 t_1 \beta \omega + c(s_1 t_2 + s_2 t_1) \beta \omega \alpha 
.
\end{align*}
Hence $r_1^2 = s_1 t_1$ and $r_1^2 b = c(s_1 t_2 + s_2 t_1)$.

Further, we have the equalities
\[
 0 = h(\omega \beta) = h(\omega) h(\beta) 
 = (t_1 \omega + t_2 \omega \alpha) (s_1 \beta + s_2 \alpha \beta)
 = (s_1 t_2 + s_2 t_1) \omega \alpha \beta
,
\]
and hence $s_1 t_1 + s_2 t_1 = 0$.
But then $r_1^2 b = 0$,
which contradicts $r_1 \neq 0$ and $b \neq 0$.
\end{proof}

\begin{proposition}
\label{prop:6.2}
Let $\Lambda = \Lambda(Q,f,m_{\bullet},c_{\bullet},b_{\bullet})$
be a socle deformed weighted triangulation algebra,
$\alpha$ a border loop of $(Q,f)$, 
$c = c_{\cO(\alpha)}$, 
$b = b_{s(\alpha)}$,
and $e = e_{s(\alpha)}+e_{t(g(\alpha))}$.
Assume that 
$|Q_0|\geqslant 3$,
$m_{\cO(\alpha)} = 1$, 
and
$\cO(\alpha) \neq \cO(f(\bar{\alpha}))$.
Then the idempotent algebra $e \Lambda e$
is isomorphic to $B(r,c,b)$ for some $r\geqslant1$.
\end{proposition}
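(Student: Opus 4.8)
The strategy mirrors the proof of Proposition~\ref{prop:5.2}: I want to exhibit an explicit isomorphism from $e\Lambda e$ to $B(r,c,b)$ by choosing the right generators inside $e\Lambda e$ and checking that they satisfy exactly the defining relations of $B(r,c,b)$, together with a dimension count to conclude surjectivity forces bijectivity. Write $i=s(\alpha)$ and $j=t(g(\alpha))=s(f(\bar\alpha))$; since $\alpha$ is a border loop we have $f(\alpha)=\alpha$, so $\bar\alpha=g(\alpha)$ is the unique other arrow at $i$, it is not virtual (Proposition~\ref{prop:2.3} remark after Definition~\ref{def:socdef}), and $n_\alpha\geq 3$ because $|Q_0|\geq 2$. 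Since $m_{\cO(\alpha)}=1$, the $g$-cycle of $\alpha$ has length $n_\alpha\geq 3$ and $A_\alpha=\alpha g(\alpha)\cdots g^{n_\alpha-2}(\alpha)$, $B_\alpha = A_\alpha\, g^{n_\alpha-1}(\alpha)$.

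First I would identify the relevant arrows between $i$ and $j$. Set $\beta=g(\alpha)=\bar\alpha$ (an arrow $i\to j$) and $\omega = f(\bar\alpha)\,g(f(\bar\alpha))\cdots$, i.e. the segment of the $g$-cycle of $\beta$ running from $j$ back to $i$; because $m_{\cO(\alpha)}=1$ and $\cO(\alpha)\neq\cO(f(\bar\alpha))$, the arrows in the $g$-orbit of $\beta$ lying on a path $j\to i$ multiply (in $e\Lambda e$) to a well-defined element $\omega$ with $\beta\omega = A_\beta\cdot(\text{unit})$ up to the parameter $c$, and $\omega\beta = 0$ from the zero relations $\alpha f(\alpha)g(f(\alpha))$. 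Then set $\varrho = f(\bar\alpha)\cdots$ the complementary loop-like path at $j$ built from the arrows of the other $g$-orbit at $j$; its length-$r$ power $\varrho^r$ equals $\omega\alpha\beta$ (this is where $r$ is defined — $r$ is the appropriate multiplicity/orbit-length quantity for the second simple). I would verify in $\Lambda$, using Definition~\ref{def:socdef}(1)--(4), the relations $\alpha^2 = c\beta\omega + b\beta\omega\alpha$ (this is precisely the border-loop relation $\alpha^2 = c A_{\bar\alpha} - b B_{\bar\alpha}$ rewritten, using $\charact K$ unspecified here so signs are as stated), $\alpha\beta\omega=\beta\omega\alpha$ (from $\alpha f(\alpha)-c_{\bar\alpha}A_{\bar\alpha}$ type relations), $\omega\alpha\beta = \varrho^r$, $\omega\beta=0$, $\beta\varrho=0$, $\varrho\omega=0$.

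With these relations in hand, there is a surjective algebra homomorphism $B(r,c,b)\to e\Lambda e$ sending the named generators to the chosen elements; surjectivity holds because $e\Lambda e$ is generated over $K$ by $e_i, e_j$ and the arrows/paths supported at $i,j$, and the chosen elements generate all of them (here one uses that $m_{\cO(\alpha)}=1$ so no higher powers along the $\alpha$-orbit survive, and that the arrows genuinely between $i$ and $j$ are accounted for by $\beta,\omega$). Finally I would compare dimensions: $B(r,c,b)$ has dimension $r+10$ by the displayed basis, and $\dim_K e\Lambda e = \dim_K e_i\Lambda e_i + \dim_K e_i\Lambda e_j + \dim_K e_j\Lambda e_i + \dim_K e_j\Lambda e_j$, each computed from the explicit basis of $\Lambda$ described via $g$-cycles (Proposition~\ref{prop:2.3}(i) and the monomial basis); matching these — with $r$ chosen exactly so the count works, i.e. $r = m_{\cO(f(\bar\alpha))} n_{\cO(f(\bar\alpha))} - (\text{length of }\omega) $ or the analogous expression — forces the surjection to be an isomorphism.

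The main obstacle I anticipate is bookkeeping the $g$-orbit combinatorics around vertex $j$: one must carefully separate the two $g$-orbits meeting at $j$, show that the "long" path $\omega$ from $j$ to $i$ and the "loop" path $\varrho$ at $j$ interact via exactly the stated relations (in particular $\beta\varrho=0=\varrho\omega$ from the length-three zero relations, and $\omega\alpha\beta=\varrho^r$ from the socle identifications $A_\bullet(\rad\Lambda)=\soc$), and pin down the precise value of $r$ and the dimension of $e_j\Lambda e_j$ so the final count is exact. The hypotheses $|Q_0|\geq 3$, $m_{\cO(\alpha)}=1$ and $\cO(\alpha)\neq\cO(f(\bar\alpha))$ are exactly what rules out the degenerate overlaps (a loop at $j$, a coincidence of orbits, or extra surviving powers of $\alpha$) that would otherwise spoil the model $B(r,c,b)$; I would flag at the start where each is used.
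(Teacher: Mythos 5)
Your plan follows the same route as the paper's proof: pick out $\alpha$, $\beta=g(\alpha)=\bar\alpha$, the complementary segment $\omega$ of the $g$-cycle of $\alpha$, and the $g$-cycle $\varrho$ of $f(\bar\alpha)$ at $j$, then verify that these satisfy the defining relations of $B(r,c,b)$. Three concrete corrections, and one warning about where the real work lies. First, your formula for $\omega$ is wrong: $\omega$ must be $g(\bar\alpha)g^{2}(\bar\alpha)\cdots g^{n_\alpha-1}(\alpha)$ (so that $\beta\omega=A_{\bar\alpha}$ and $\alpha^2=c\beta\omega+b\beta\omega\alpha$), whereas $f(\bar\alpha)$ is the \emph{first arrow of} $\varrho$, not of $\omega$; your verbal description (``segment of the $g$-cycle of $\beta$ from $j$ back to $i$'') is the right one. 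Second, $r$ is not given by your guessed expression: one gets $c\,\omega\alpha\beta=c_{g(\beta)}B_{g(\beta)}=c_{f(\bar\alpha)}B_{f(\bar\alpha)}=c_{f(\bar\alpha)}\varrho^{\,m_{f(\bar\alpha)}}$, so $r=m_{\cO(f(\bar\alpha))}$ exactly, and one rescales $\varrho$ by an $r$-th root of $c\,c_{f(\bar\alpha)}^{-1}$ to normalize $\omega\alpha\beta=\varrho^{r}$. (Also $\dim B(r,c,b)=r+9$, not $r+10$, from the displayed basis.) Third, the paper does not finish by a dimension count; it identifies $e\Lambda e$ directly as the two-vertex quiver with the induced relations, and the entire substance of the proof is the verification of the zero relations $\omega\beta=0$, $\beta\varrho=0$, $\varrho\omega=0$. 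This requires a genuine case analysis ($n_{f(\bar\alpha)}\geqslant 3$ versus $n_{f(\bar\alpha)}=2$, and within the first case a subcase where $f^{2}(g^{-1}(f(\bar\alpha)))$ is virtual, forcing a substitution before the length-three zero relations of Definition~\ref{def:socdef}(3),(4) apply). You correctly flag this as the anticipated obstacle, but it is precisely the part that cannot be waved through: as written your plan asserts these relations ``from the zero relations'' without handling the virtual-arrow subcases, so it is an outline of the paper's argument rather than a proof.
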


\begin{proof}
Let 
$\beta = g(\alpha)$,
$\gamma = f(\beta)$,
$\sigma = f(\gamma)$,
$i = s(\alpha)$,
$j = s(\gamma)$,
and $k = s(\sigma)$.
It follows from the assumption that 
$|Q_0|\geqslant4$
and
$\cO(\alpha) = \cO(\beta) = \cO(\sigma) \neq \cO(\gamma)$. 
In particular, we have 
$n_{\gamma} = |\cO(\gamma)| \geqslant 2$. 
Then $e = e_i+e_j$
and
$B = e \Lambda e$
is given by the quiver $Q$
\[
  \xymatrix{
    i
    \ar@(ld,ul)^{\alpha}[]
    \ar@<.5ex>[r]^{\beta}
    & j
    \ar@<.5ex>[l]^{\omega}
    \ar@(ru,dr)^{\varrho}[]
  } 
\]
with
$\omega = g(\beta)\dots g^{n_{\alpha}-1}(\alpha)
  = g(\beta)\dots \sigma$,
$\varrho = \gamma g(\gamma)\dots g^{n_{\gamma}-1}(\gamma)$,
and the relations induced from $\Lambda$.
Let 
$\delta = g(\beta)$,
$\varepsilon = g^{-1}(\sigma)$,
$\xi = g^{-1}(\gamma)$.
Observe that
\[
  \alpha^2 = c_{\beta} A_{\beta} + b_{s(\alpha)} B_{\beta}
  = c \beta \omega + b \beta \omega \alpha ,
\]
because $m_{\beta} = m_{\alpha} = 1$.
Moreover, we have
\[
  c \alpha \beta \omega
  = c_{\alpha} B_{\alpha} = c_{\bar{\alpha}} B_{\bar{\alpha}} 
  = c_{\beta} B_{\beta} 
  = c \beta \omega \alpha ,
\]
and hence
$\alpha \beta \omega = \beta \omega \alpha$.
Further, we have
\[
  c \omega \alpha \beta
  = c_{g(\beta)} B_{g(\beta)} 
  = c_{\delta} B_{\delta} 
  = c_{\bar{\delta}} B_{\bar{\delta}} 
  = c_{\gamma} B_{\gamma} 
  = c_{\gamma} \varrho^{m_{\gamma}} ,
\]
and hence
$\omega \alpha \beta = c^{-1} c_{\gamma} \varrho^{m_{\gamma}}$.
Taking $r = m_{\gamma}$ and replacing
$\varrho$ by $a\varrho$ with $a^r = c c_j^{-1}$,
we may assume that $\omega \alpha \beta = \varrho^r$.

We claim that the relations
$\omega \beta = 0$,
$\beta \varrho = 0$,
$\varrho \omega = 0$
also hold.
We consider two cases.

\smallskip

(1)
Assume $n_{\gamma}\geqslant 3$.
Then we have
\[
 \beta \varrho
  = \beta  \gamma g(\gamma)\dots g^{n_{\gamma}-1}(\gamma)
  = \beta f(\beta) g\big(f(\beta)\big)\dots g^{n_{\gamma}-1}(\gamma)
  = 0 ,
\]
because $f^2(\beta) = \sigma$ is not virtual.
Further,
\[
  \omega \beta
  = \delta  g(\delta)\dots \varepsilon \sigma \beta
  = \delta  g(\delta)\dots \varepsilon g(\varepsilon) f\big(g(\varepsilon)\big)
  = 0 ,
\]
because $f(\varepsilon)$ is not virtual by $n_{\gamma}\geqslant 3$.
Observe that
\[
  \varrho \omega
  = \gamma g(\gamma)\dots g^{n_{\gamma}-1}(\gamma)
     g(\beta)g^2(\beta)\dots g^{n_{\alpha}-1}(\alpha)
  =  \gamma g(\gamma)\dots \xi \delta g(\delta) \dots \sigma ,
\]
with $f(\xi) = \delta$.
Hence $\varrho \omega = 0$,
if $f^2(\xi)$ is not virtual.
Assume $f^2(\xi)$ is virtual.
Then $(Q,f)$ contains a subquiver of the form
\[
  \xymatrix@R=1pc{
   & \bullet \ar[rd]^{f(\mu) = g(\delta)} \ar@<-.5ex>[dd]_{\theta}  \\
   j   \ar[ru]^{\delta} 
   && \bullet \ar[ld]^{f^2(\mu)}   \\
   & \bullet \ar[lu]^{\xi}  \ar@<-.5ex>[uu]_{\mu}
  } 
\]
and
$\varrho \omega
  = \gamma \dots \xi \delta g(\delta) g^2(\delta) \dots \sigma 
  = c_{\mu} \gamma \dots \mu g(\delta) g^2(\delta) \dots \sigma 
  = 0$,
because $f^2(\mu)$ is not virtual.

\smallskip

(2)
Assume $n_{\gamma}= 2$.
Then $(Q,f)$ contains a subquiver 
\[
  \xymatrix@R=1pc{
   & j \ar[rd]^{\delta=g(\beta)} \ar@<-.5ex>[dd]_{\gamma}  \\
   i   \ar@(ld,ul)^{\alpha}[]  \ar[ru]^{\beta} 
   && \bullet \ar[ld]^{\varepsilon=g^{-1}(\sigma)}   \\
   & k \ar[lu]^{\sigma}  \ar@<-.5ex>[uu]_{\xi}
  } 
\]
and $\varrho = \gamma \xi$.
Then we have  the equalities
\begin{align*}
&&
 \beta \varrho
  &= \beta  \gamma \xi = \beta f(\beta) g\big(f(\beta)\big) = 0,
&&
     \mbox{because $f^2(\beta) = \sigma$ is not virtual},
&&
\\
&&
  \omega \beta
  &= \delta  g(\delta)\dots \varepsilon \sigma \beta
   = \delta  g(\delta)\dots \varepsilon g(\varepsilon) f\big(g(\varepsilon)\big) = 0,
&&
     \mbox{because $f^2(\varepsilon) = \delta$ is not virtual},
&&
\\
&&
  \varrho \omega
  &= \gamma \xi \delta  g(\delta)\dots  \sigma
   = \gamma \xi f(\xi) g\big(f(\xi)\big)\dots  \sigma = 0,
&&
     \mbox{because $f^2(\xi) = \varepsilon$ is not virtual}.
&&
\end{align*}
\end{proof}

\section{Reduction to algebras with three vertices}
\label{sec:7}

We deal with the technical reduction in this case, towards the proof of
Theorem 1.3.

Let $\Delta$ be the quiver
\[
  \xymatrix@C=.8pc@R=1.5pc{
     1 \ar@(dl,ul)[]^{\alpha} \ar[rr]^{\beta} &&
     2 \ar@(ur,dr)[]^{\eta} \ar[ld]^{\gamma} \\
     & 3 \ar@(dr,dl)[]^{\mu} \ar[lu]^{\sigma}}
\]
and $b_{\bullet}, c_{\bullet} : Q_0 \to K$ two functions.
We consider the algebra $D(b_{\bullet}, c_{\bullet})$ 
given by the quiver  $\Delta$ and the relations:
\begin{align*}
&&
\beta \gamma &= c_1 \alpha \beta \eta \gamma \mu,
&
\alpha \beta \eta \gamma \mu \sigma &=  \beta \eta \gamma \mu \sigma \alpha,
&
\alpha^2 &= c_1  \beta \eta \gamma \mu \sigma 
   + b_1  \beta \eta \gamma \mu \sigma \alpha,
&
 \beta \gamma \mu &= 0,
&
 \mu \sigma \beta &= 0,
&
 \gamma \sigma \alpha &= 0,
 &&
\\
&&
\gamma \sigma &= c_2 \eta \gamma \mu \sigma \alpha,
&
\eta \gamma \mu \sigma \alpha \beta &= \gamma \mu \sigma \alpha \beta \eta ,
&
\eta^2 &= c_2   \gamma \mu \sigma \alpha \beta  
   + b_2   \gamma \mu \sigma \alpha \beta \eta,
&
 \alpha \beta \gamma &= 0,
&
 \sigma \beta \eta &= 0,
&
 \eta \gamma \sigma &= 0,
 &&
\\
&&
\sigma \beta &= c_3 \mu \sigma \alpha \beta \eta,
&
\mu \sigma \alpha \beta \eta \gamma &=  \sigma \alpha \beta \eta \gamma \mu,
&
\mu^2 &= c_3  \sigma \alpha \beta \eta \gamma 
   + b_3 \sigma \alpha \beta \eta \gamma \mu,
&
 \alpha^2 \beta &= 0,
&
 \sigma \alpha^2 &= 0.
 &&
\end{align*}
We observe that
\begin{align*}
\eta^2 \gamma &= c_2   \gamma \mu \sigma \alpha \beta \gamma 
   + b_2  \gamma \mu \sigma \alpha \beta \eta \gamma
   = b_2  \eta \gamma \mu \sigma \alpha \beta \gamma
   = 0,
\\
\beta \eta^2 &= c_2   \beta \gamma \mu \sigma \alpha \beta  
   + b_2   \beta \gamma \mu \sigma \alpha \beta \eta
   = 0,
\\
\mu^2 \sigma &= c_3  \sigma \alpha \beta \eta \gamma \sigma 
   + b_3 \sigma \alpha \beta \eta \gamma \mu \sigma
   = b_3 \mu \sigma \alpha \beta \eta \gamma \sigma
   = 0,
\\
\gamma \mu^2 &= c_3  \gamma \sigma \alpha \beta \eta \gamma 
   + b_3 \gamma \sigma \alpha \beta \eta \gamma \mu
   = 0.
\end{align*}
Therefore, 
 $D(b_{\bullet}, c_{\bullet})$ 
is a $K$-algebra of dimension $36$.

\begin{lemma}
\label{lem:7.1}
Let $K$ be of characteristic $2$, and
$A= D(b'_{\bullet}, c'_{\bullet})$ 
and
$B= D(b_{\bullet}, c_{\bullet})$ 
with 
$b'_1 = 0$,
$c'_1 \neq 0$,
$b_1 \neq 0$,
$c_1 \neq 0$.
Then there is no $K$-algebra isomorphism 
$h : A \to B$ such that 
$h(e_i) = e_i$ for any $i \in \{1,2,3\}$.
\end{lemma}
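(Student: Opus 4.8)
The plan is to mimic the structure of the proof of Proposition~\ref{prop:5.1}, working with the explicit bases of $A$ and $B$ given above. Suppose $h : A \to B$ is an isomorphism with $h(e_i) = e_i$ for all $i$. Then $h$ must send the arrows of $\Delta$ to elements of the appropriate corners $e_{s}(\rad B)e_{t}$, with invertible leading coefficients, and $h$ is determined by
\[
 h(\alpha) = p_1\alpha + (\text{higher terms in } e_1(\rad B)^2 e_1), \quad
 h(\beta) = q_1\beta + q_2\alpha\beta, \quad \dots,
\]
and similarly for $\eta,\gamma,\sigma,\mu$; one writes out each image as a general linear combination of the basis paths in the relevant corner, with scalars $p_i,q_i,\dots \in K$ and $p_1,q_1,\dots$ nonzero. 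The goal is to apply $h$ to the defining relations of $A$ (those with $b'_1 = 0$), expand using the relations of $B$ and characteristic $2$, and compare coefficients on a well-chosen basis path to force a contradiction with $b_1 \neq 0$.

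The key relation to exploit is $\alpha^2 = c'_1\,\beta\eta\gamma\mu\sigma$ in $A$ (since $b'_1 = 0$), i.e.\ $h(\alpha)^2 = c'_1\, h(\beta)h(\eta)h(\gamma)h(\mu)h(\sigma)$ in $B$. First I would record the low-degree consequences: comparing the coefficient of $\beta\eta\gamma\mu\sigma$ (the relevant degree-5 path in $e_1 B e_1$, which equals $c_1^{-1}\beta\gamma$ up to socle corrections) and of $\alpha\beta$, $\alpha\gamma$ etc. on both sides should pin down relations among the leading scalars, roughly $p_1^2 = c_1' {}^{-1}c_1 q_1 r_1 \cdots$ for suitable products, exactly as $a_0b_0 = 0$, $r_0s_0 = 0$ arose in Proposition~\ref{prop:5.1}. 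Auxiliary relations such as $h(\beta\gamma\mu) = 0$, $h(\mu\sigma\beta)=0$, $h(\gamma\sigma\alpha)=0$ and $h(\alpha\beta\gamma)=0$ give further linear constraints forcing various "mixed" coefficients to vanish, mirroring the step where $s_1t_2 + s_2t_1 = 0$ was derived in Lemma~\ref{lem:6.1}. Then the decisive computation is the coefficient of the socle-type path $\beta\eta\gamma\mu\sigma\alpha$ (equivalently $(\text{length }6)$ path spanning $\soc(e_1 A e_1)$): on the left, $h(\alpha)^2$ contributes a term $p_1^2 b_1\,\beta\eta\gamma\mu\sigma\alpha$ coming from the $b_1$-term of $\alpha^2 = c_1\beta\eta\gamma\mu\sigma + b_1\beta\eta\gamma\mu\sigma\alpha$ in $B$, together with square terms of the higher coefficients of $h(\alpha)$, which in characteristic $2$ collapse; on the right, $c_1'\, h(\beta)h(\eta)h(\gamma)h(\mu)h(\sigma)$ should be shown to have coefficient $0$ there, using the vanishing of the mixed coefficients established earlier and $t_1 = u_1$-type identities. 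This yields $p_1^2 b_1 = 0$, contradicting $p_1 \neq 0$ and $b_1 \neq 0$.

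The main obstacle I anticipate is bookkeeping: unlike the local algebra $A(m,c,b)$, here there are three idempotents and five non-loop arrows, so the images $h(\beta),\dots,h(\mu)$ each have several higher-order correction terms, and carefully identifying which basis path of $B$ to track --- and showing that all the "unwanted" contributions to the coefficient of $\beta\eta\gamma\mu\sigma\alpha$ on the right-hand side genuinely vanish --- requires systematically using the zero relations $\beta\gamma\mu = \mu\sigma\beta = \gamma\sigma\alpha = \alpha\beta\gamma = \alpha^2\beta = \sigma\alpha^2 = 0$ together with the derived relations $\beta\eta^2 = \eta^2\gamma = 0$, etc. A useful simplification is to note that to reach the contradiction it suffices to work modulo $(\rad B)^7$ (or modulo terms not lying in $e_1 B e_1$) and to observe that in characteristic $2$ the square of any element of $(\rad B)^2 \cap e_1 B e_1$ lies in $(\rad B)^4$ and contributes to the target coefficient only through cross terms, most of which are killed by the zero relations; this should reduce the problem to tracking just the leading coefficients $p_1$ of $h(\alpha)$ and $q_1,\dots$ of $h(\beta),h(\eta),h(\gamma),h(\mu),h(\sigma)$, exactly as in the proofs already given. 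Once the reduction is set up, the final comparison is a short calculation parallel to the end of Lemma~\ref{lem:6.1}.
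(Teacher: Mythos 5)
Your proposal follows essentially the same route as the paper's proof: apply $h$ to $\alpha^2 = c'_1\,\beta\eta\gamma\mu\sigma$, observe that in characteristic $2$ the left-hand side collapses to $u_1^2\alpha^2 = u_1^2c_1\,\beta\eta\gamma\mu\sigma + u_1^2 b_1\,\beta\eta\gamma\mu\sigma\alpha$, and show that the coefficient of $\beta\eta\gamma\mu\sigma\alpha$ on the right-hand side, which comes out as $c'_1 v_1 s_1 w_1(r_2t_1+r_1t_3)$, must vanish --- the paper extracts $r_2t_1+r_1t_3=0$ from the fact that $\sigma\beta$ equals a scalar multiple of $\mu\sigma\alpha\beta\eta$ and hence lies in $(\rad B)^5$, read modulo $(\rad B)^4$ against the path $\sigma\alpha\beta$, while your listed auxiliary relation $\mu\sigma\beta=0$ yields the same identity. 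The resulting contradiction $u_1^2 b_1=0$ with $u_1\neq 0$, $b_1\neq 0$ is exactly as you indicate (only your parenthetical identification of $\beta\eta\gamma\mu\sigma$ with $c_1^{-1}\beta\gamma$ should read $c_1^{-1}\alpha^2$ modulo the socle, since $\beta\gamma$ lives in $e_1Be_3$).
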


\begin{proof}
Suppose there exists a $K$-algebras isomorphism 
$h : A \to B$ with 
$h(e_i) = e_i$ for any $i \in \{1,2,3\}$.
Then  there exist elements
$r_1,s_1,t_1\in K^*$ and $r_i,s_i,t_i\in K$, $i \in \{2,3,4\}$,
such that
\begin{align*}
h(\beta) &= r_1 \beta + r_2 \alpha \beta + r_3 \beta \eta
        + r_4 \alpha \beta \eta,
\\
h(\gamma) &= s_1 \gamma + s_2 \eta \gamma  + s_3 \gamma \mu
      	+ s_4 \eta \gamma \mu,
\\
h(\sigma) &= t_1 \sigma + t_2 \mu \sigma + t_3 \sigma \alpha 
	+ t_4 \mu \sigma \alpha.
\end{align*}
We observe that we have in $B$ the equalities
\begin{align*}
\alpha^3 &= c_1 \alpha \beta \eta \gamma \mu \sigma
	   + b_1 \alpha \beta \eta \gamma \mu \sigma \alpha
	= c_1 \alpha \beta \eta \gamma \mu \sigma
	   + b_1 \alpha^2 \beta \eta \gamma \sigma
	= c_1 \alpha \beta \eta \gamma \mu \sigma
	= c_1 \beta \eta \gamma \mu \sigma \alpha.
\end{align*}
Since $c_1 \neq 0$, we conclude that
$\alpha^3 \neq 0$.
Moreover, $\alpha^4 = 0$ because $\sigma \alpha^2 = 0$.
Hence
\begin{align*}
h(\alpha) &= u_1 \alpha +  u_2 \alpha^2 +  u_3 \alpha^3,
\end{align*}
for some
$u_1\in K^*$ and $u_2,u_3\in K$.
Moreover, there exist 
$v_1,w_1\in K^*$ and $v_2,v_3,w_2,w_3\in K$
such that
\begin{align*}
h(\eta) &= v_1 \eta + v_2 \gamma \mu \sigma \alpha \beta
	+ v_3  \gamma \mu  \sigma \alpha \beta  \eta,
\\
h(\mu) &= w_1 \mu + w_2 \sigma \alpha \beta \eta \gamma 
	+ w_3\sigma \alpha \beta \eta \gamma \mu .
\end{align*}
Since $b'_1 = 0$ and $K$ is of characteristic $2$,
we conclude that the following equalities hold in $B$
\begin{align*}
u_1^2 &  c_1 \beta \eta \gamma \mu \sigma 
 +  u_1^2 b_1 \beta \eta \gamma \mu \sigma \alpha
 = u_1^2  \alpha^2 = \big(u_1 \alpha + u_2 \alpha^2 + u_3 \alpha^3\big)^2
\\
 &= h(\alpha)^2 = h(\alpha^2)
 = h(c'_1 \beta \eta \gamma \mu \sigma)
 = c'_1 h(\beta) h(\eta) h(\gamma) h(\mu) h(\sigma)
\\
 &= c'_1 \big( 
   r_1 v_1 s_1 w_1 t_1  \beta \eta \gamma \mu \sigma
   + r_2 v_1 s_1 w_1 t_1  \alpha \beta \eta \gamma \mu \sigma
   + r_1 v_1 s_1 w_1 t_3  \beta \eta \gamma \mu \sigma \alpha
 \big)
\\
 &= c'_1 r_1 v_1 s_1 w_1 t_1  \beta \eta \gamma \mu \sigma
   + c'_1  v_1 s_1 w_1 (r_2 t_1 + r_1 t_3) \beta \eta \gamma \mu \sigma \alpha
,
\end{align*}
and hence 
$u_1^2 c_1 = c'_1 r_1  v_1 s_1 w_1 t_1$
and
$u_1^2 b_1 = c'_1  v_1 s_1 w_1 (r_2 t_1 + r_1 t_3)$.
In particular, we obtain $r_2 t_1 + r_1 t_3 \neq 0$,
because
$u_1,b_1,c'_1,v_1,s_1,w_1\in K^*$.
On the other hand,
we have the following equalities in $B/(\rad B)^4$
\begin{align*}
0 + (\rad B)^4 
 &= h (c'_1  \mu  \sigma \alpha \beta  \eta) + (\rad B)^4 
 = h (\sigma \beta) + (\rad B)^4 
 = h (\sigma) h (\beta) + (\rad B)^4 
\\&
 = 
 (t_1 \sigma + t_2 \mu \sigma + t_3 \sigma \alpha + t_4 \mu \sigma \alpha)
 (r_1 \beta + r_2 \alpha \beta + r_3 \beta \eta + r_4 \alpha \beta \eta)
 + (\rad B)^4 
\\&
 =(t_1 r_2 + t_3 r_1) \sigma \alpha \beta + (\rad B)^4 
,
\end{align*}
because 
$\sigma \beta = c'_1 \mu  \sigma \alpha \beta  \eta \in (\rad B)^5$.
Hence $r_2 t_1 + r_1 t_3 = 0$, a contradiction.
This proves the claim.
\end{proof}

\begin{proposition}
\label{prop:7.2}
Let $\Lambda = \Lambda(Q,f,m_{\bullet},c_{\bullet},b_{\bullet})$
be a socle deformed weighted triangulation algebra,
$\alpha$ a border loop of $(Q,f)$, 
and assume that
$\cO(\alpha) = \cO(f(\bar{\alpha}))$,
$m_{\cO(\alpha)} = 1$, 
and take $e = e_{s(\alpha)}+e_{t(g(\alpha))}+e_{t(f(g(\alpha)))}$.
Then the idempotent algebra $e \Lambda e$
is isomorphic to an algebra $ D(b_{\bullet}, c_{\bullet})$ 
with
$c_1 = c_{\alpha}$
and 
$b_1 = b_{s(\alpha)}$.
\end{proposition}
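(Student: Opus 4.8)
\emph{Proof sketch.} The plan is as follows. Write $i=s(\alpha)$; since $\alpha$ is a border loop the other arrow at $i$ is $\beta:=g(\alpha)=\bar\alpha\colon i\to j$ with $j:=t(g(\alpha))$, and set $\gamma:=f(\beta)\colon j\to k$ with $k:=t(f(g(\alpha)))$ and $\sigma:=f(\gamma)\colon k\to i$ (using $f^3=\id$, so $f(\sigma)=\beta$). The first step is to read off the combinatorial meaning of the hypotheses. On the $g$-cycle $\cO:=\cO(\alpha)$ the arrows $\sigma,\alpha,\beta,\bar\gamma$ occur consecutively, since $g(\sigma)=\overline{f(\sigma)}=\bar\beta=\alpha$, $g(\alpha)=\beta$, $g(\beta)=\overline{f(\beta)}=\bar\gamma$; and the assumption $\cO(\alpha)=\cO(f(\bar\alpha))=\cO(\gamma)$ puts $\gamma$, hence also $\bar\sigma=g(\gamma)$, on $\cO$ as well. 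As the only arrows of $Q$ with source $j$ (resp.\ $k$) are $\gamma,\bar\gamma$ (resp.\ $\sigma,\bar\sigma$), reading $\cO$ cyclically from $\alpha$ it must factor uniquely as
\[
  \alpha\ \longrightarrow\ \beta\ \longrightarrow\ \bar\gamma\ \longrightarrow\ \cdots\ \longrightarrow\ \gamma\ \longrightarrow\ \bar\sigma\ \longrightarrow\ \cdots\ \longrightarrow\ \sigma\ \longrightarrow\ \alpha ,
\]
where $\eta$ is the path along $\cO$ from $\bar\gamma$ up to (but not including) $\gamma$, and $\mu$ the path from $\bar\sigma$ up to $\sigma$; both run from $j$ to $j$, respectively $k$ to $k$, and neither meets a vertex of $\{i,j,k\}$ internally (an internal occurrence would place an extra arrow of $\cO$ with that source strictly inside the displayed stretches). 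Thus $B_\alpha=\alpha\beta\eta\gamma\mu\sigma$, $A_\alpha=\alpha\beta\eta\gamma\mu$, and $|\cO|=|\eta|+|\mu|+4\geqslant 6$; one also checks $i,j,k$ are pairwise distinct. Put $c:=c_\cO$ (a single scalar, as $\cO$ is one orbit) and $b_1:=b_i$.

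With $e=e_i+e_j+e_k$, the classes of the loops $\alpha,\eta,\mu$ and of $\beta,\gamma,\sigma$ span $\rad(e\Lambda e)/\rad^2(e\Lambda e)$ (using that $\eta,\mu$ do not factor through $\{i,j,k\}$), so $e\Lambda e$ is a quotient of $K\Delta$. Since $|\cO|\geqslant 6$, no arrow of $\cO$ is virtual, so relations (3), (4) of Definition~\ref{def:socdef} hold at every arrow. Substituting the $A$- and $B$-path expressions into the defining relations of $\Lambda$ produces, in $e\Lambda e$,
\[
  \beta\gamma=cA_\alpha=c\,\alpha\beta\eta\gamma\mu,\qquad
  \gamma\sigma=cA_{\bar\gamma}=c\,\eta\gamma\mu\sigma\alpha,\qquad
  \sigma\beta=cA_{\bar\sigma}=c\,\mu\sigma\alpha\beta\eta,
\]
\[
  \alpha^2=cA_\beta+b_1B_\beta=c\,\beta\eta\gamma\mu\sigma+b_1\,\beta\eta\gamma\mu\sigma\alpha ,
\]
together with the eight monomial relations $\alpha^2\beta=\beta\gamma\mu=\gamma\sigma\alpha=\sigma\beta\eta=\alpha\beta\gamma=\eta\gamma\sigma=\mu\sigma\beta=\sigma\alpha^2=0$ coming from (3), (4) at $i,j,k$. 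The commutation relations $\alpha\beta\eta\gamma\mu\sigma=\beta\eta\gamma\mu\sigma\alpha$ and their cyclic variants follow from $\alpha^3=\alpha\cdot cA_\beta=cB_\alpha$ and $\alpha^3=cA_\beta\cdot\alpha=cB_\beta$ (using $\alpha B_\beta=0=\sigma\alpha^2$); and the loop relations $\eta^2=c\,\gamma\mu\sigma\alpha\beta+b_2\,\gamma\mu\sigma\alpha\beta\eta$, $\mu^2=c\,\sigma\alpha\beta\eta\gamma+b_3\,\sigma\alpha\beta\eta\gamma\mu$ (with $b_2=b_j$, $b_3=b_k$) come out of the same substitutions once the scalar of $\Lambda$ along the $\cO$-stretch defining $\eta$ (resp.\ $\mu$) is renormalised to $1$. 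The conceptual way to organise this is to note that $D(b_\bullet,c_\bullet)$ is itself the socle deformed weighted triangulation algebra $\Lambda(\Delta,f',m'_\bullet,c'_\bullet,b'_\bullet)$ with $f'=(\alpha)(\eta)(\mu)(\beta\,\gamma\,\sigma)$, $m'_\bullet\equiv 1$, single $g'$-cycle $(\alpha\,\beta\,\eta\,\gamma\,\mu\,\sigma)$, so the above is precisely the statement that contracting the $g$-stretches $\eta,\mu$ to single arrows carries $(Q,f)$, with its weight, parameter and border data, to $(\Delta,f')$ with $c'_1=c=c_\alpha$ and $b'_1=b_{s(\alpha)}$. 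Either way one obtains a surjection $D(b_\bullet,c_\bullet)\twoheadrightarrow e\Lambda e$.

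It then remains to count dimensions: from the standard monomial basis of $\Lambda$ one finds that the elements starting and ending in $\{i,j,k\}$ number $36$, so $\dim_K e\Lambda e=36=\dim_K D(b_\bullet,c_\bullet)$ and the surjection is an isomorphism, with $c_1=c_\alpha$ and $b_1=b_{s(\alpha)}$. The point I expect to cost the most work is the first step together with the verification of the loop relations for the contracted paths $\eta$ and $\mu$: showing that these behave exactly like border loops, with the correct scalar and deformation parameters, requires a short case analysis on the internal structure of the $\cO$-stretches and is the analogue here of the distinction ``$n_\gamma\geqslant 3$ versus $n_\gamma=2$'' in the proof of Proposition~\ref{prop:6.2}; the weight conditions of Section~\ref{sec:2} and the length-$1$-or-$3$ structure of the $f$-orbits enter exactly there.
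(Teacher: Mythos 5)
Your setup agrees with the paper's: the identification of the quiver of $e\Lambda e$ with $\Delta$, the relations $\alpha^2=c\,\beta\eta\gamma\mu\sigma+b_1\beta\eta\gamma\mu\sigma\alpha$, $\beta\gamma=c\,\alpha\beta\eta\gamma\mu$, $\gamma\sigma=c\,\eta\gamma\mu\sigma\alpha$, $\sigma\beta=c\,\mu\sigma\alpha\beta\eta$, the commutation relations, and the count $\dim_K e\Lambda e=36$ are all correct, and concluding by a surjection plus dimension count is a legitimate variant of the paper's argument. The genuine gap is in the loop relations for the contracted paths $\eta$ and $\mu$ --- exactly the step you flag as ``costing the most work''. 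You assert that $\eta$ and $\mu$ ``behave exactly like border loops'', i.e.\ that $\eta^2=c\,\gamma\mu\sigma\alpha\beta+b_j\,\gamma\mu\sigma\alpha\beta\eta$ after a renormalisation, and you propose to organise this by presenting $D(b_\bullet,c_\bullet)$ as the socle deformed weighted triangulation algebra of $(\Delta,f')$ with $f'=(\alpha)(\eta)(\mu)(\beta\,\gamma\,\sigma)$. This is false in general. When $\eta$ has length $\geqslant 2$, write $\eta=\delta\cdots\xi$ with $\delta=g(\beta)$, $\xi=g^{-1}(\gamma)$, so $\delta=f(\xi)$; then $\eta^2$ contains the factor $\xi\delta g(\delta)=\xi f(\xi)g(f(\xi))$, which is a zero relation of type (3) (or becomes one after the virtual-arrow substitution), and one finds $\eta^2=0$ in every configuration except one: $\eta=\delta\xi$ with $g(\delta)=\xi$ and a $g$-fixed loop $\varrho$ at the intermediate vertex with $m_\varrho=2$, in which case $\eta^2=c_\varrho c_\delta\,\gamma\mu\sigma\alpha\beta\eta$. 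Thus the correct outcome of the deferred case analysis is $(c_2,b_2)=(c_\gamma,b_{s(\eta)})$ only when $\eta$ is itself a single border loop of $(Q,f)$; otherwise $(c_2,b_2)=(0,0)$ or $(0,c_\varrho c_\delta)$. In particular $c_2$ is usually $0$, so $e\Lambda e$ is usually \emph{not} of the form $\Lambda(\Delta,f',m'_\bullet,c'_\bullet,b'_\bullet)$ (parameter functions take values in $K^*$), and $b_2$ is in general not a value of the border function of $(Q,f)$ --- indeed $j$ lies in $\partial(Q,f)$ only in the single-border-loop case. Your surjection $D(b_\bullet,c_\bullet)\twoheadrightarrow e\Lambda e$ therefore does not exist with the parameters you specify; the proposition survives only because its statement leaves $c_2,c_3,b_2,b_3$ unconstrained, and the case analysis you postpone is precisely what determines them.

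A secondary, patchable point: ``since $|\cO|\geqslant 6$, no arrow of $\cO$ is virtual, so relations (3),(4) hold at every arrow'' is too quick, because the exceptions in (3),(4) refer to arrows such as $f^2(x)$ and $f(\bar{x})$ that need not lie on $\cO$. For the eight monomial relations you invoke, each relevant arrow has to be checked individually to be non-virtual, as the paper does (e.g.\ $f(g^{-1}(\sigma))=\overbar{\sigma}=g(\gamma)\in\cO$ for $\mu\sigma\beta=0$); here the checks all succeed, but they are not automatic from your blanket statement.
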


\begin{proof}
Let 
$\beta = g(\alpha)$,
$\gamma = f(\beta)$,
$\sigma = f(\gamma)$,
$1 = s(\alpha)$,
$2 = t(\beta)$,
$3 = t(\gamma)$,
$\eta = g(\beta)\dots g^{-1}(\gamma)$,
$\mu = g(\gamma)\dots g^{-1}(\sigma)$.
Since 
$f(\alpha) = \alpha$,
$\beta = \bar{\alpha}$,
$\gamma = f(\bar{\alpha})$,
and
$\cO(\alpha) = \cO(\gamma)$,
we conclude that
$|Q_0|\geqslant3$
and
$\beta,\gamma,\sigma$
are pairwise different.
Then $e \Lambda e$
is given by the quiver 
\[
  \xymatrix@C=.8pc@R=1.5pc{
     1 \ar@(dl,ul)[]^{\alpha} \ar[rr]^{\beta} &&
     2 \ar@(ur,dr)[]^{\eta} \ar[ld]^{\gamma} \\
     & 3 \ar@(dr,dl)[]^{\mu} \ar[lu]^{\sigma}}
\]
and the relations induced from $\Lambda$.
Taking
$c_1 = c_{\alpha}$
and
$b_1 = b_{s(\alpha)}$,
we conclude that
\begin{align*}
 \alpha^2 &= c_{\beta} A_{\beta} +  b_{s(\alpha)} B_{\beta}
  = c_1 \beta  \eta \gamma \mu \sigma 
     + b_1 \beta  \eta \gamma \mu \sigma \alpha
,\\
 \beta \gamma &=  c_{\alpha} A_{\alpha} 
  = c_1 \alpha \beta \eta \gamma \mu 
,\\
 \gamma \sigma &=  c_{g(\beta)} A_{g(\beta)} 
  = c_1 \eta \gamma \mu \sigma \alpha
,\\
\sigma  \beta &=  c_{g(\gamma)} A_{g(\gamma)} 
  = c_1 \mu \sigma \alpha \beta \eta
 ,\\
 \beta \gamma \mu &=  
  \beta f(\beta) g\big(f(\beta)\big)\dots g^{-1}(\sigma)
  = 0, 
\mbox{ because $f^2(\beta) = \sigma$ is not virtual}
,\\
 \gamma \sigma \alpha &=  
  \gamma f(\gamma) g\big(f(\gamma)\big) = 0, 
\mbox{ because $f^2(\gamma) = \beta$ is not virtual}
,
\end{align*}
\begin{align*}
\sigma \beta \eta &=  
  \sigma f(\sigma) g(\beta)\dots g^{-1}(\gamma)
  = 0, 
\mbox{ because $f^2(\sigma) = \gamma$ is not virtual}
,\\
 \mu \sigma \beta &=  
  g(\gamma)\dots g^{-1}(\sigma) \sigma f(\sigma)
  = 0, 
\mbox{ because $f\big(g^{-1}(\sigma))$ is not virtual}
,\\
 \alpha \beta \gamma &=  
  \alpha g(\alpha) f\big(g(\alpha)\big)
  = 0, 
\mbox{ because $f(\alpha) = \alpha$ is not virtual}
,\\
  \eta \gamma \sigma &=  
  g(\beta) \dots g^{-1}(\gamma) \gamma f(\gamma)
  = 0, 
\mbox{ because $f\big(g^{-1}(\gamma))$ is not virtual}
,\\
 \alpha^2 \beta &=  
  \alpha f(\alpha) g\big(f(\alpha)\big) = 0, 
\mbox{ because $f^2(\alpha) = \alpha$ is not virtual}
,\\
 \sigma \alpha^2 &=  
  \sigma g(\sigma) f\big(g(\sigma)\big)
  = 0, 
\mbox{ because $f(\sigma) = \beta$ is not virtual}
.
\end{align*}
Further, since 
$c_{\alpha} B_{\alpha} = c_{\beta} B_{\beta}$,
$c_{g(\beta)} B_{g(\beta)} = c_{\gamma} B_{\gamma}$,
$c_{g(\gamma)} B_{g(\gamma)} = c_{\sigma} B_{\sigma}$,
and 
$c_1 = c_{\alpha} = c_{\beta} = c_{g(\beta)} = c_{\gamma} = c_{g(\gamma)} = c_{\sigma}$,
we obtain the equalities
\begin{align*}
&&
  \alpha \beta  \eta \gamma \mu \sigma 
  &=  \beta  \eta \gamma \mu \sigma \alpha,
&
   \eta \gamma \mu \sigma \alpha \beta
  &= \gamma \mu \sigma \alpha  \beta  \eta,
&
  \mu \sigma  \alpha \beta  \eta \gamma
  &= \sigma \alpha  \beta  \eta \gamma \mu.
&&
\end{align*}
We will prove now that
\[
 \eta^2 = c_2 \gamma \mu \sigma \alpha \beta 
               + b_2 \gamma \mu \sigma \alpha \beta \eta
\]
for some $c_2,b_2 \in K$.
If $\eta = g(\beta) = g^{-1}(\gamma)$ then
$f(\eta) = \eta$, and taking
$c_2 = c_{\gamma}$, $b_2 = b_{s(\eta)}$, we have
\[
 \eta^2 = c_{\gamma} A_{\gamma} + b_{s(\eta)} B_{\gamma}
      = c_2 \gamma \mu \sigma \alpha \beta 
          + b_2 \gamma \mu \sigma \alpha \beta \eta .
\]

Assume that $g(\beta) \neq g^{-1}(\gamma)$,
and set $\delta = g(\beta)$, $\xi  = g^{-1}(\gamma)$.
Note that then $\delta = f(\xi)$.
If $g(\delta) \neq \xi$, we have
\[
 \eta^2 = \delta g(\delta) \dots \xi \delta g(\delta) \dots \xi .
\]
We claim that then $\eta^2 = 0$,
so we may take $c_2 = 0$, $b_2 = 0$. 
If $f^2(\xi)$ is not virtual, then
$\xi \delta g(\delta) = \xi f(\xi) g(f(\xi)) = 0$,
and hence $\eta^2 = 0$.
Assume that $f^2(\xi)$ is virtual.
Then $(Q,f)$ contains a subquiver of the form
\[
  \xymatrix@R=1pc{
   & 4 \ar[rd]^{g(\delta)} \ar@<-.5ex>[dd]_{\omega}  \\
  2   \ar[ru]^{\delta} 
   && 6 \ar[ld]^{\varepsilon=g^{-1}(\xi)}   \\
   & 5 \ar[lu]^{\xi}  \ar@<-.5ex>[uu]_{\nu}
  } 
\]
with $g^2(\delta) \neq \varepsilon$.
Then we obtain
\[
 \eta^2 = \delta g(\delta) \dots \xi \delta g(\delta) g^2(\delta) \dots \varepsilon \xi
  = c_{\nu} \delta g(\delta) \dots \nu  g(\delta) g^2(\delta) \dots \varepsilon \xi
  = 0 ,
 \]
because $g(\delta) = f(\nu)$ and $f^2(\nu) = \varepsilon$ is not virtual.

Assume now that $g(\delta) = \xi$.
Then $(Q,f)$ contains a subquiver of the form
\[
  \xymatrix{
    2
    \ar@<.5ex>[r]^{\delta}
    & 4
    \ar@<.5ex>[l]^{\xi}
    \ar@(ru,dr)^{\varrho}[]
  } 
\]
with
$f(\delta) = \varrho$,
$f(\varrho) = \xi$,
$f(\xi) = \delta$,
and hence $g(\varrho) = \varrho$.
Then $\eta = \delta \xi$ and
\[
 \eta^2 
  = \delta \xi \delta \xi
  = \delta \xi f(\xi) \xi
  = c_{\varrho} \delta \varrho^{m_{\varrho}-1} \xi .
\]
We note that $m_{\varrho} = m_{\varrho} n_{\varrho} \geqslant 2$.
If $m_{\varrho} \geqslant 3$,
we have
$\delta \varrho^{m_{\varrho}-1} 
  = \delta  f(\delta) g(f(\delta)) \varrho^{m_{\varrho}-3} = 0$,
because $f^2(\delta) = \xi$ is not virtual.
Then again $\eta^2 = 0$, and we may take
$c_2 = 0$, $b_2 = 0$.
Finally, assume that $m_{\varrho} = 2$.
Then 
\[
 \eta^2 
  = c_{\varrho} \delta \varrho \xi
  = c_{\varrho} \delta  f(\delta) f^2(\delta)
  = c_{\varrho} c_{\delta} B_{\delta} 
  = c_{\varrho} c_{\delta} \delta \xi \gamma \mu \sigma \alpha \beta
  = c_{\varrho} c_{\delta} \eta \gamma \mu \sigma \alpha \beta
  = c_{\varrho} c_{\delta} \gamma \mu \sigma \alpha \beta \eta ,
\]
so we may take $c_2 = 0$ and $b_2 = c_{\varrho} c_{\delta}$.

Similarly, we prove that 
\[
 \mu^2 = c_3 \sigma \alpha \beta  \eta \gamma
               + b_3 \sigma \alpha \beta \eta \gamma \mu
\]
for some $c_3,b_3 \in K$.
\end{proof}

\section{Algebras $Q(2\cA)^k(b)$}
\label{sec:8}

In this section we classify the socle equivalences
of algebras of quaternion type $Q(2\cA)$
(introduced in \cite{E}).
These algebras will appear
in the proof of Theorem~\ref{th:main3}
(in Section~\ref{sec:10})
in the case of $\Lambda$ being
an algebra with two simple modules
not isomorphic to an algebra
$Q(2\cB)_3^t(b)$ with $t \geqslant 4$ and $b \in K^*$.

Let $(Q,f)$ be the triangulation quiver
\[
  \xymatrix{
    1
    \ar@(ld,ul)^{\alpha}[]
    \ar@<.5ex>[r]^{\beta}
    & 2
    \ar@<.5ex>[l]^{\gamma}
    \ar@(ru,dr)^{\eta}[]
  } 
\]
with $f$-orbits $(\alpha)$ and $(\beta\ \eta\ \gamma)$.
Then we have the $g$-orbits 
$\cO(\alpha) = (\alpha\ \beta\ \gamma)$
and
$\cO(\eta) = (\eta)$,
and
$\partial(Q,f) = \{ 1 \}$.
Let 
$m_{\bullet}:\cO(g)\to\bN^*$ 
be a weight function with
$m_{\cO(\alpha)} = k \geqslant 2$
and
$m_{\cO(\eta)} = 2$,
$c_{\bullet}:\cO(g)\to K^*$
a parameter function,
and
$b_{\bullet} : \partial(Q,f) \to K$
a border function.
We abbreviate 
$c_{\cO(\alpha)} = c$,
$c_{\cO(\eta)} = a$,
$d = b_1$.
Then the associated socle deformed weighted triangulation algebra
$\Lambda = \Lambda(Q,f,m_{\bullet},c_{\bullet},b_{\bullet})$
is given by the quiver $Q$ and the relations:
\begin{align*}
&&
  \alpha^2 &= c(\beta \gamma \alpha)^{k-1} \beta \gamma 
     + d(\beta \gamma \alpha)^k,
&
  \gamma \beta &= a \eta,
&
\alpha^2 \beta &= 0,
&
\beta \eta^2 &= 0,
&
\eta \gamma \alpha &= 0,
&&
\\
&&
\beta \eta &=  c(\alpha \beta \gamma)^{k-1} \alpha \beta,
&
\eta \gamma &= c(\gamma\alpha \beta )^{k-1} \gamma \alpha,
&
\gamma \alpha^2 &= 0,
&
\eta^2 \gamma &= 0,
&
\alpha \beta \eta &= 0.
&&
\end{align*}
We note that $\eta$ is a virtual loop of $(Q,f)$.

For a natural number $k \geqslant 2$ and $b \in K$,
we denote by $Q(2\cA)^k(b)$ the algebra given by 
the Gabriel quiver $Q_{\Lambda}$ of $\Lambda$
\[
  \xymatrix{
    1
    \ar@(ld,ul)^{\alpha}[]
    \ar@<.5ex>[r]^{\beta}
    & 2
    \ar@<.5ex>[l]^{\gamma}
  } 
\]
and the relations:
\begin{align*}
&&
  \alpha^2 &= (\beta \gamma \alpha)^{k-1} \beta \gamma 
     + b(\beta \gamma \alpha)^k,
&
\beta \gamma \beta &=  (\alpha \beta \gamma)^{k-1} \alpha \beta,
&
\gamma \beta \gamma &= (\gamma\alpha \beta )^{k-1} \gamma \alpha,
&
\alpha^2 \beta &= 0
&&
\end{align*}
(compare \cite[Theorem~VII.7.1]{E}).
We note that then
\begin{align*}
  \gamma \alpha^2 &= \gamma (\beta \gamma \alpha)^{k-1} \beta \gamma 
     + b \gamma(\beta \gamma \alpha)^k
  = \gamma \beta \gamma (\alpha \beta \gamma)^{k-1}
     + b \gamma \beta \gamma (\alpha \beta \gamma)^{k-1} \alpha
\\&
  = (\gamma\alpha \beta )^{k-1} \gamma \alpha^2 \beta \gamma (\alpha \beta \gamma)^{k-1}
    + b (\gamma\alpha \beta )^{k-1}  \gamma \alpha^2 \beta \gamma (\alpha \beta \gamma)^{k-1} \alpha 
     = 0 ,
\end{align*}
and 
\begin{align*}
  \alpha^3 &= (\beta \gamma \alpha)^{k-1} \beta \gamma \alpha
     + b (\beta \gamma \alpha)^k \alpha
  = (\beta \gamma \alpha)^{k}
     + b (\beta \gamma \alpha)^{k-1} \beta \gamma \alpha^{2}
  = (\beta \gamma \alpha)^{k}
,
\\
  \alpha^3 &= \alpha (\beta \gamma \alpha)^{k-1} \beta \gamma 
     + b \alpha(\beta \gamma \alpha)^k
  = (\alpha \beta \gamma)^{k}
     + b \alpha \beta (\gamma \alpha \beta)^{k-1} \gamma \alpha
\\&
  = (\alpha \beta \gamma)^{k}
     + b \alpha \beta \gamma \beta \gamma
  = (\alpha \beta \gamma)^{k}
     + b \alpha^2 \beta (\gamma \alpha \beta)^{k-1} \gamma 
  = (\alpha \beta \gamma)^{k}
.
\end{align*}

\begin{proposition}
\label{prop:8.1}
$\Lambda = \Lambda(Q,f,m_{\bullet},c_{\bullet},b_{\bullet})$
is isomorphic to $Q(2\cA)^k(b)$ for some $b \in K$.
Moreover, $b = 0$ if and only if $b_{\bullet} = 0$.
\end{proposition}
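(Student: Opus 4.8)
The plan is to realize $\Lambda$ as a quotient of the path algebra of its Gabriel quiver and then to absorb the parameter function into a rescaling of the arrows. Since $\eta$ is the only virtual arrow of $(Q,f)$, the Gabriel quiver of $\Lambda$ is the quiver $Q_{\Lambda}$ obtained by deleting $\eta$, and the relation $\gamma\beta = a\eta$ lets us eliminate $\eta$ by the substitution $\eta = a^{-1}\gamma\beta$. Carrying this substitution through the defining relations of $\Lambda$ gives, in $\Lambda$, the equalities
\[
 \alpha^2 = c(\beta\gamma\alpha)^{k-1}\beta\gamma + d(\beta\gamma\alpha)^k, \quad
 \beta\gamma\beta = ac(\alpha\beta\gamma)^{k-1}\alpha\beta, \quad
 \gamma\beta\gamma = ac(\gamma\alpha\beta)^{k-1}\gamma\alpha, \quad
 \alpha^2\beta = 0 ,
\]
which is all that will be used about $\Lambda$.

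For scalars $x,y,z \in K^*$ to be chosen, let $h \colon KQ_{\Lambda} \to \Lambda$ be the $K$-algebra homomorphism that is the identity on vertices and sends $\alpha \mapsto x\alpha$, $\beta \mapsto y\beta$, $\gamma \mapsto z\gamma$; it is surjective since $x,y,z \neq 0$. Using the displayed relations, together with the fact that $(\beta\gamma\alpha)^{k-1}\beta\gamma$ and $(\beta\gamma\alpha)^k$ are linearly independent in $\Lambda$ (they have distinct lengths and both are nonzero), one checks that $h$ annihilates each of the four defining relations of $Q(2\cA)^k(b)$ if and only if
\[
 x^2 c = x^{k-1}(yz)^k, \qquad ac = x^k(yz)^{k-2}, \qquad b = d\,(cx)^{-1},
\]
the relations $\beta\gamma\beta - (\alpha\beta\gamma)^{k-1}\alpha\beta$ and $\gamma\beta\gamma - (\gamma\alpha\beta)^{k-1}\gamma\alpha$ producing the same middle equation, while $h(\alpha^2\beta) = x^2 y\,\alpha^2\beta = 0$ holds automatically.

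Writing $w = yz$, the first two equations are equivalent to $x^3 = aw^2$ and $x^{k-3}w^k = c$. Fixing a square root $s$ of $a$ and choosing $t \in K^*$ with $t^{5k-6} = cs^k$ --- possible because $K$ is algebraically closed and $5k-6 \geqslant 4$ (as $k \geqslant 2$) --- one verifies directly that $x = t^2$, $w = s^{-1}t^3$ (say with $y = w$, $z = 1$) solve them. Set $b := d\,(cx)^{-1} \in K$; as $c,x \in K^*$ we get $b = 0$ if and only if $d = 0$, that is, if and only if $b_{\bullet} = 0$. With these choices $h$ induces a surjective $K$-algebra homomorphism $\bar h \colon Q(2\cA)^k(b) \to \Lambda$. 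Since $\dim_K \Lambda = m_{\cO(\alpha)} n_{\cO(\alpha)}^2 + m_{\cO(\eta)} n_{\cO(\eta)}^2 = 9k+2$ by Proposition~\ref{prop:2.3}(i), and $\dim_K Q(2\cA)^k(b) = 9k+2$ as well by \cite[Theorem~VII.7.1]{E} (equivalently, $Q(2\cA)^k(b)/\!\soc \cong Q(2\cA)^k(0)/\!\soc$ has dimension $9k$ and each indecomposable projective has one-dimensional socle), the surjection $\bar h$ is an isomorphism, which proves the claim.

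The only substantial work is the exponent bookkeeping leading to the three displayed conditions, and the (routine) verification that $\dim_K Q(2\cA)^k(b)$ does not depend on $b$. I expect no genuine obstacle: the elimination of $\eta$ is forced, the monomial system is solvable over the algebraically closed field $K$ because its exponent matrix has determinant $5k-6 \neq 0$, and the rescaling of arrows is the standard normalization of the parameter function of a weighted surface algebra (cf. \cite{ES5}).
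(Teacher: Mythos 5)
Your proposal is correct, and its computational core -- eliminating $\eta$ via $\eta = a^{-1}\gamma\beta$ and then rescaling the three remaining arrows to absorb the parameters -- is the same as the paper's. The packaging differs in two ways worth noting. First, you solve the monomial system in one step (reducing to $x^{3}=aw^{2}$, $x^{k-3}w^{k}=c$ with exponent determinant $5k-6$), whereas the paper normalizes in two stages, first choosing $u$ with $u^{3k-4}=ac$ to clear the middle relations and then $\lambda,\mu$ with $\lambda^{3}v=\mu^{4}$, $\lambda^{k}\mu^{2k-4}=1$; your version is tidier and makes the solvability transparent. Second, you conclude via a surjection $Q(2\cA)^k(b)\to\Lambda$ plus a dimension count, while the paper identifies the two presentations outright (after checking that the extra induced zero relations such as $\gamma\alpha^2=0$, $\alpha\beta\gamma\beta=0$ are consequences of the displayed ones). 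These are mirror images of the same unglamorous verification: the paper must justify that the displayed relations really present $\Lambda$ on its Gabriel quiver, and you must justify that $\dim_K Q(2\cA)^k(b)=9k+2$, i.e.\ that the $9k+2$ alternating monomials span $KQ_{\Lambda}$ modulo the four relations independently of $b$ (the paper lists this basis, without proof, only later in the proof of Proposition~\ref{prop:8.2}). Neither gap is serious, but your appeal to ``$Q(2\cA)^k(b)/\!\soc\cong Q(2\cA)^k(0)/\!\soc$ and one-dimensional socles'' should not be used as stated, since the self-injectivity of $Q(2\cA)^k(b)$ is not yet available at this point; the honest route is the direct spanning-set computation or the Cartan-matrix argument from \cite{E}. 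All the explicit checks in your write-up (the scalar conditions $x^2c=x^{k-1}(yz)^k$, $ac=x^k(yz)^{k-2}$, $b=d(cx)^{-1}$, and the choice $x=t^2$, $w=s^{-1}t^3$) are correct, and the equivalence $b=0\Leftrightarrow d=0$ follows exactly as you say.
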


\begin{proof}
Since $\eta = a^{-1} \gamma \beta$,
we conclude that $\Lambda$ is given by the quiver
$Q_{\Lambda}$ and the relations:
\begin{align*}
&&
  \alpha^2 &= c (\beta \gamma \alpha)^{k-1} \beta \gamma 
     + d(\beta \gamma \alpha)^k,
&
\beta \gamma \beta &=  a c (\alpha \beta \gamma)^{k-1} \alpha \beta,
&
\alpha^2 \beta &= 0,
&
\beta \gamma \beta \gamma \beta &= 0,
&
\alpha \beta \gamma \beta &= 0,
&&
\\
&&&
&
\gamma \beta \gamma &= a c (\gamma\alpha \beta )^{k-1} \gamma \alpha,
&
\gamma \alpha^2 &= 0,
&
\gamma \beta \gamma \beta \gamma &= 0,
&
\gamma \beta \gamma \alpha &= 0.
&&
\end{align*}
We note that the relations 
$\gamma \alpha^2 = 0$, 
$\alpha \beta \gamma \beta = 0$, 
$\gamma \beta \gamma \alpha = 0$, 
$\beta \gamma \beta \gamma \beta = 0$ 
and
$\gamma \beta \gamma \beta \gamma = 0$ 
are superfluous,
because
\begin{align*}
\gamma \alpha^2 
  &= \gamma \big(c (\beta \gamma \alpha)^{k-1} \beta \gamma + d(\beta \gamma \alpha)^k\big)
   = c (\gamma \beta \gamma) (\alpha \beta \gamma)^{k-1}
     + d (\gamma \beta \gamma) \alpha  (\beta \gamma \alpha)^{k-1}
  \\&
   = a c^2 (\gamma\alpha \beta )^{k-1} \gamma (\alpha^2 \beta) \gamma (\alpha \beta \gamma)^{k-2}
     + a c d  (\gamma\alpha \beta )^{k-1} \gamma (\alpha^2 \beta) \gamma \alpha (\beta \gamma \alpha)^{k-2}
   = 0 ,
\\
  \alpha \beta \gamma \beta 
 &= a c \alpha (\alpha \beta \gamma)^{k-1} \alpha \beta
 = a c \alpha^2 \beta (\gamma \alpha \beta)^{k-1}
 = 0 ,
\\
  \gamma \beta \gamma \alpha 
 &= a c \big((\gamma\alpha \beta )^{k-1} \gamma \alpha\big) \alpha
 = a c (\gamma\alpha \beta )^{k-1} \gamma \alpha^2
 = 0 ,
\\
   \beta \gamma \beta \gamma \beta
 &= a c (\alpha \beta \gamma)^{k-1} \alpha \beta \gamma \beta
 = 0 ,
\\
   \gamma \beta \gamma \beta \gamma
 &=  a c \gamma \beta (\gamma\alpha \beta )^{k-1} \gamma \alpha
 =  a c (\gamma \beta \gamma\alpha) (\beta \gamma \alpha)^{k-1}
 = 0 .
\end{align*}
Let $u$ be an element in $K$ with $u^{3k-4} = a c$,
and
$\alpha^* = u \alpha$,
$\beta^* = u \beta$,
$\gamma^* = u \gamma$.
Then we have the relations
\begin{align*}
&&
\beta^* \gamma^* \beta^* &=  \big(\alpha^* \beta^* \gamma^*\big)^{k-1} \alpha^* \beta^*,
&
\gamma^* \beta^* \gamma^* &=  \big(\gamma^*\alpha^* \beta^*\big)^{k-1} \gamma^* \alpha^*,
&
\big(\alpha^*\big)^2 \beta^* &= 0
,
&&
\\
&&
&&
\!\!\!\!\!\!\!\!\!\!\!\!\!\!\!\!\!\!\!\!\!\!\!\!\!\!\!\!\!\!\!\!\!\!\!\!\!\!\!\!\!\!\!\!\!\!\!\!\!\!
\!\!\!\!\!\!\!\!\!\!\!\!\!\!\!\!\!\!\!\!\!\!\!\!\!\!\!\!\!\!\!\!\!\!\!\!\!\!\!\!\!\!\!\!\!\!\!\!\!\!
  \big(\alpha^*\big)^2 = c u^{-3(k-1)} \big(\beta^* \gamma^* \alpha^*\big)^{k-1} \beta^* \gamma^* 
     + d u^{-3k+2} \big(\beta^* \gamma^* \alpha^*\big)^k
.
\!\!\!\!\!\!\!\!\!\!\!\!\!\!\!\!\!\!\!\!\!\!\!\!\!\
\!\!\!\!\!\!\!\!\!\!\!\!\!\!\!\!\!\!\!\!\!\!\!\!\!\!\!\!\!\!\!\!\!\!\!\!\!
&
\end{align*}
Let $v = c u^{-3(k-1)}$ and
$w = d u^{-3k+2}$,
so we have the relation
\[
  \big(\alpha^*\big)^2 
  = v \big(\beta^* \gamma^* \alpha^*\big)^{k-1} \beta^* \gamma^* 
     + w \big(\beta^* \gamma^* \alpha^*\big)^k
     .
\]
Now we take $\lambda, \mu \in K$ satisfying
$\lambda^{3} v = \mu^4$ 
and  
$\lambda^k \mu^{2k-4} = 1$.
We set
$\alpha' = \lambda \alpha^*$,
$\beta' = \mu \beta^*$,
and
$\gamma' = \mu \gamma^*$.
Then we obtain the relations
\begin{align*}
&&
\beta' \gamma' \beta' &=  \big(\alpha' \beta' \gamma'\big)^{k-1} \alpha' \beta',
&
\gamma' \beta' \gamma' &=  \big(\gamma'\alpha' \beta'\big)^{k-1} \gamma' \alpha',
&
\big(\alpha'\big)^2 \beta' &= 0
,
&&
\\
&&
&&
\!\!\!\!\!\!\!\!\!\!\!\!\!\!\!\!\!\!\!\!\!\!\!\!\!\!\!\!\!\!\!\!\!\!\!\!\!\!\!\!\!\!\!\!\!\!\!\!\!\!
\!\!\!\!\!\!\!\!\!\!\!\!\!\!\!\!\!\!\!\!\!\!\!\!\!\!\!\!\!\!\!\!\!\!\!\!\!\!\!\!\!\!\!\!\!\!\!\!\!\!
  \big(\alpha'\big)^2 =  \big(\beta' \gamma' \alpha'\big)^{k-1} \beta' \gamma' 
     + b \big(\beta' \gamma' \alpha'\big)^k
\!\!\!\!\!\!\!\!\!\!\!\!\!\!\!\!\!\!\!\!\!\!\!\!\!\!\!\!\!\!\!\!\!\!\!
\!\!\!\!\!\!
&
\end{align*}
with 
$b = \lambda^2 \omega \mu^{-2k} \lambda^{-k} 
   = \lambda^2 \omega \mu^{-4}
   = \lambda^2 d u^{-3k+2} \mu^{-4}$.

Therefore, $\Lambda$ is isomorphic to $Q(2\cA)^k(b)$.
Clearly, $b = 0$ if and only if $d = 0$.
\end{proof}

\begin{proposition}
\label{prop:8.2}
Let $K$ be of characteristic $2$, 
$k \geqslant 2$ a natural number,
and $b,c\in K$.
Then the $K$-algebras 
$Q(2\cA)^k(b)$ and $Q(2\cA)^k(c)$
are isomorphic if and only if
$b^{5k-6} = c^{5k-6}$.
In particular, if $b \in K^*$, then
$Q(2\cA)^k(b)$ is not isomorphic to $Q(2\cA)^k(0)$.
\end{proposition}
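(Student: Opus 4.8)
I would prove the two implications separately: for sufficiency I would exhibit an explicit isomorphism obtained by rescaling the three arrows, and for necessity I would analyse an arbitrary isomorphism relation by relation, extracting a system of scalar equations that forces $b^{5k-6}=c^{5k-6}$. Throughout I would use freely that, by Proposition~\ref{prop:8.1} together with Proposition~\ref{prop:2.3}(i), every $Q(2\cA)^k(b)$ has dimension $\sum_{\cO\in\cO(g)} m_{\cO}n_{\cO}^2 = k\cdot 3^2 + 2\cdot 1^2 = 9k+2$.

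\textbf{Sufficiency.} Assume $b^{5k-6}=c^{5k-6}$. If $b=0$ then also $c=0$ and the algebras coincide, so assume $b,c\in K^*$ and set $\lambda=b/c$; note $\lambda^{5k-6}=b^{5k-6}/c^{5k-6}=1$. I would look for $\mu,\nu\in K^*$ so that the assignment $e_i\mapsto e_i$, $\alpha\mapsto\lambda\alpha$, $\beta\mapsto\mu\beta$, $\gamma\mapsto\nu\gamma$ (with the generators on the right those of $Q(2\cA)^k(b)$) sends every defining relation of $Q(2\cA)^k(c)$ to zero in $Q(2\cA)^k(b)$. Substituting into the four relations one checks that this holds precisely when
\[
  \lambda^k(\mu\nu)^{k-2}=1
  \qquad\text{and}\qquad
  (\mu\nu)^k=\lambda^{3-k},
\]
and that under this substitution the parameter at the second algebra is exactly $\lambda^{-1}b=c$. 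Eliminating $\mu\nu$ from the two displayed equations yields $(\mu\nu)^2=\lambda^3$ together with $\lambda^{5k-6}=1$; conversely, since $K$ is algebraically closed and $\charact K=2$ (so that the sign ambiguity in taking square roots disappears), for our $\lambda$ one may choose $q\in K$ with $q^2=\lambda^3$ and put $\mu=q$, $\nu=1$, and then both displayed equations hold. This gives a surjective $K$-algebra homomorphism $Q(2\cA)^k(c)\to Q(2\cA)^k(b)$ (its image contains $e_1,e_2$ and, since $\lambda,\mu,\nu$ are invertible, also $\alpha,\beta,\gamma$), and by the dimension equality $9k+2=9k+2$ it is an isomorphism. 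In particular $Q(2\cA)^k(b)\cong Q(2\cA)^k(0)$ would force $b^{5k-6}=0$, which is impossible for $b\in K^*$ since $5k-6\geq 4$; this is the final assertion.

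\textbf{Necessity.} Let $h\colon Q(2\cA)^k(b)\to Q(2\cA)^k(c)$ be a $K$-algebra isomorphism. Since the Gabriel quiver $Q_{\Lambda}$ has vertex $1$ as its unique vertex carrying a loop, $h$ fixes the primitive idempotents up to an inner automorphism, so we may assume $h(e_i)=e_i$. Write $h(\alpha)=u_1\alpha+\rho$, $h(\beta)=v_1\beta+\sigma$, $h(\gamma)=w_1\gamma+\tau$ with $u_1,v_1,w_1\in K^*$ and $\rho,\sigma,\tau\in\rad^2$. Applying $h$ to the defining relations of $Q(2\cA)^k(b)$ and comparing coefficients in $Q(2\cA)^k(c)$, one uses that $\rad^{3k+1}=0$ and that $(h(\beta)h(\gamma)h(\alpha))^k=(u_1v_1w_1)^k(\beta\gamma\alpha)^k$ (likewise $(h(\alpha)h(\beta)h(\gamma))^k=(u_1v_1w_1)^k(\alpha\beta\gamma)^k$) because everything of radical degree $>3k$ vanishes and, in characteristic $2$, the symmetric cross‑terms in $\rho,\sigma,\tau$ cancel in pairs. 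Comparing the coefficients of the length‑$(3k-1)$ element $(\beta\gamma\alpha)^{k-1}\beta\gamma$ in the images of the two ``quaternion'' relations $\beta\gamma\beta=(\alpha\beta\gamma)^{k-1}\alpha\beta$ and $\gamma\beta\gamma=(\gamma\alpha\beta)^{k-1}\gamma\alpha$ and of $\alpha^2=(\beta\gamma\alpha)^{k-1}\beta\gamma+b(\beta\gamma\alpha)^k$ yields $u_1^k(v_1w_1)^{k-2}=1$ and $(v_1w_1)^k=u_1^{3-k}$, which by the same elimination as above force $u_1^{5k-6}=1$. Finally, comparing the coefficients of the socle generator $\omega_1=(\beta\gamma\alpha)^k=\alpha^3$ on the two sides of the image of the $\alpha^2$‑relation gives $c=u_1b$, so $c^{5k-6}=u_1^{5k-6}b^{5k-6}=b^{5k-6}$.

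\textbf{Main obstacle.} The delicate point is the last comparison in the necessity direction: one must show that the higher‑order corrections $\rho,\sigma,\tau$ contribute nothing to the coefficient of $\omega_1$ beyond what is already pinned down by the relations proved first. This needs a precise hold on the Loewy structure of $Q(2\cA)^k(c)$ (that $\rad^{3k}$ is spanned by $\omega_1,\omega_2$, that $(\beta\gamma\alpha)^{k-1}\beta\gamma$ and $\omega_1$ are linearly independent, and the analogous facts at vertex $2$) together with the systematic cancellation of terms $x+x$ afforded by $\charact K=2$. The case $k=2$, where $5k-6=4$, is the tightest and — as in Proposition~\ref{prop:5.1} and Lemma~\ref{lem:7.1} — is probably best written out by hand; the relevant normal forms are also available from \cite{E}.
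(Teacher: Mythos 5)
Your proposal is correct and follows essentially the same route as the paper's proof: the same rescaling isomorphism for sufficiency (your conditions $\lambda^k(\mu\nu)^{k-2}=1$, $(\mu\nu)^k=\lambda^{3-k}$ and the elimination to $\lambda^{5k-6}=1$ are exactly the paper's \eqref{eq:rel-unity}, \eqref{eq:rel-aa-}, \eqref{eq:rel-unity-}), and the same coefficient extraction $u_1^{5k-6}=1$, $c=u_1b$ for necessity. The one point to note is that the "main obstacle" you flag is resolved in the paper not by automatic pairwise cancellation of cross-terms in characteristic $2$ but by the inductive identities \eqref{eq:a23} and \eqref{eq:bc}, which are forced by the lower-degree coefficient equations of the isomorphism; you correctly locate where the work lies, so this is a matter of execution rather than a gap in the strategy.
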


\begin{proof}
(1)
Assume first that $b^{5k-6} = c^{5k-6}$ and $b,c\in K^*$.
We choose $d \in K$ with $d^2 = b^{-1} c$.
Then we have an isomorphism of algebras
$h : Q(2\cA)^k(b) \to Q(2\cA)^k(c)$
given by
\begin{align*}
&&
 h(\alpha) &= d^2 \alpha,
 &
 h(\beta) &= \beta,
 &
 h(\gamma) &= d^3 \gamma.
&&
\end{align*}
Indeed, we have the equalities in $Q(2\cA)^k(c)$
\begin{align*}
 h\big( &(\beta \gamma \alpha)^{k-1} \beta \gamma + b(\beta \gamma \alpha)^k \big)
 = \big( h(\beta) h(\gamma) h(\alpha) \big)^{k-1} h(\beta) h(\gamma)
    + b\big(h(\beta) h(\gamma) h(\alpha) \big)^k
 = d^{5k-2} (\beta \gamma \alpha)^{k-1} \beta \gamma
     + b d^{5k} (\beta \gamma \alpha)^k 
\\&
 = d^{5k-2} (\beta \gamma \alpha)^{k-1} \beta \gamma
     + d^{5k-2} c (\beta \gamma \alpha)^k 
 = d^{5k-2} \big( (\beta \gamma \alpha)^{k-1} \beta \gamma
       + c(\beta \gamma \alpha)^k \big)
 = d^{5k-2} \alpha^2
 = d^{5k-6} \big(d^{4} \alpha^2\big)
 = h(\alpha)^2,
\end{align*}
because 
$1 = (b^{-1}c)^{5k-6} = d^{2(5k-6)} =(d^{5k-6})^2$
implies $d^{5k-6} = 1$
($K$ is of characteristic $2$),
\begin{align*}
 h\big( (\alpha \beta \gamma)^{k-1} \alpha \beta \big)
&
 = \big( h(\alpha) h(\beta) h(\gamma) \big)^{k-1} h(\alpha) h(\beta)
 = d^{5k-3} (\alpha \beta \gamma)^{k-1} \alpha \beta
 = d^{5k-3} \beta \gamma \beta
 = d^{5k-6} \beta (d^3 \gamma) \beta
 =  h(\beta) h(\gamma)  h(\beta),
\\
 h\big( (\gamma \alpha \beta)^{k-1} \gamma \alpha \big)
&
 = \big( h(\gamma) h(\alpha) h(\beta) \big)^{k-1} h(\gamma) h(\alpha)
 = d^{5k} (\gamma \alpha \beta)^{k-1} \gamma \alpha
 = d^{5k} \gamma \beta \gamma
 = d^{5k-6} (d^3 \gamma) \beta (d^3 \gamma)
 =  h(\gamma) h(\beta) h(\gamma),
\end{align*}
and clearly 
$h(\alpha^2\beta) = h(\alpha)^2 h(\beta) = d^4 \alpha^2 \beta = 0$.

\smallskip

(2)
Assume that there exists an isomorphism of $K$-algebras
$\varphi : Q(2\cA)^k(b) \to Q(2\cA)^k(c)$.
We observe that $Q(2\cA)^k(c)$ has a $K$-basis given by the elements:
\begin{enumerate}[\quad(a)]
 \item
  $e_1$,
  $(\alpha \beta \gamma)^i \alpha$,
  $(\beta \gamma \alpha)^i \beta \gamma$,
  $(\alpha \beta \gamma)^j$,
  $(\beta \gamma \alpha)^j$,
  $(\alpha \beta \gamma)^k = \alpha^3 = (\beta \gamma \alpha)^k$,
  $0 \leqslant i \leqslant k-1$,
  $1 \leqslant j \leqslant k-1$,
 \item
  $\beta (\gamma \alpha \beta)^i$,
  $\alpha \beta (\gamma \alpha \beta)^i$,
  $0 \leqslant i \leqslant k-1$,
 \item
  $\gamma (\alpha \beta \gamma)^i$,
  $\gamma \alpha (\beta \gamma \alpha)^i$,
  $0 \leqslant i \leqslant k-1$,
 \item
  $e_2$,
  $\gamma \beta$,
  $(\gamma \alpha \beta)^j$,
  $1 \leqslant j \leqslant k$.
\end{enumerate}
Then $\varphi$ is given by the elements
\begin{align*}
\varphi(\alpha) &= A_0 + A_1 + A_2 + A_3
,\\
\varphi(\beta) &= B_0 + B_1
, \\
\varphi(\gamma) &= C_0 + C_1
,
\end{align*}
where we abbreviate
\begin{align*}
A_0 &= \sum_{i=0}^{k-1} a_{4i} \alpha (\beta \gamma \alpha)^i ,
&
A_1 &= \sum_{i=0}^{k-1} a_{4i+1} \beta \gamma (\alpha \beta \gamma)^i ,
&
A_2 &= \sum_{i=0}^{k-1} a_{4i+2} (\alpha \beta \gamma)^{i+1} ,
&
A_3 &= \sum_{i=0}^{k-2} a_{4i+3} (\beta \gamma \alpha)^{i+1} ,
\\
B_0 &= \sum_{i=0}^{k-1} b_{2i} \beta (\gamma \alpha \beta)^i ,
&
B_1 &=  \sum_{i=0}^{k-1} b_{2i+1} \alpha \beta (\gamma \alpha \beta)^i , 
&
C_0 &= \sum_{i=0}^{k-1} c_{2i} \gamma (\alpha \beta \gamma)^i ,
&
C_1 &= \sum_{i=0}^{k-1} c_{2i+1} \gamma \alpha (\beta \gamma \alpha)^i .
\end{align*}
We observe that 
\begin{align*}
A_0 A_0 &= a_0^2 \alpha^2, &
A_0 A_2 &= 0 = A_3 A_0, &
A_2 A_3 &= 0 = A_3 A_2, \\
A_1 A_1 &= 0, &
A_1 A_3 &= 0 = A_2 A_1, & 
C_0 B_0 &= b_0 c_0 \gamma \beta
,
\\
C_1 B_1 &= 0, &
C_0 B_0 C_1 &= 0, &
C_0 B_0 C_0 &= b_0 c_0^2 \gamma \beta \gamma , 
\\
B_1 C_0 B_0 &= 0 , & 
B_0 C_0 B_0 &= b_0^2 c_0 \beta \gamma \beta 
,
\end{align*}
because 
$\alpha^2 \beta = 0$,
$\alpha \beta \gamma \beta = 0$,
$\gamma \beta \gamma \alpha = 0$,
$\gamma \alpha^2 = 0$.

Further, we have the equalities
\begin{align*}
\varphi\big((\gamma \alpha \beta)^{k-1} \gamma \alpha\big)
 &= \big(\varphi(\gamma) \varphi(\alpha) \varphi(\beta)\big)^{k-1} 
                \varphi(\gamma) \varphi(\alpha)
 = a_0^k b_0^{k-1} c_0^k  (\gamma \alpha \beta)^{k-1} \gamma \alpha 
,\\
\varphi(\gamma \beta \gamma) &= 
\varphi(\gamma) \varphi(\beta) \varphi(\gamma) 
=
C_0 B_0 C_0 + C_0 B_1 C_0 + C_0 B_1 C_1 + C_1 B_0 C_0 + C_1 B_0 C_1
,
\end{align*}
with
\begin{align*}
C_0 B_1 C_0 &=
 \bigg(\sum_{i=0}^{k-1} c_{2i} \gamma (\alpha \beta \gamma)^i\bigg)
 \bigg(\sum_{j=0}^{k-1} b_{2j+1} \alpha \beta (\gamma \alpha \beta)^j \bigg)
 \bigg(\sum_{l=0}^{k-1} c_{2l} \gamma (\alpha \beta \gamma)^l\bigg)
\\&=
 \sum_{m=0}^{k-2} 
 \bigg(\sum_{i,j,l \geqslant 0; i+j+l = m}c_{2i} b_{2j+1}  c_{2l} \bigg)
 \gamma (\alpha \beta \gamma)^{m+1}
,
\\
C_0 B_1 C_1 &=
 \bigg(\sum_{i=0}^{k-1} c_{2i} \gamma (\alpha \beta \gamma)^i\bigg)
 \bigg(\sum_{j=0}^{k-1} b_{2j+1} \alpha \beta (\gamma \alpha \beta)^j \bigg)
 \bigg(\sum_{i=0}^{k-1} c_{2l+1} \gamma \alpha (\beta \gamma \alpha)^l\bigg)
\\&=
 \sum_{m=0}^{k-2} 
 \bigg(\sum_{i,j,l \geqslant 0; i+j+l = m}c_{2i} b_{2j+1}  c_{2l+1} \bigg)
 \gamma (\alpha \beta \gamma)^{m+1} \alpha
,
\\
C_1 B_0 C_0 &=
 \bigg(\sum_{i=0}^{k-1} c_{2i+1} \gamma \alpha (\beta \gamma \alpha)^i\bigg)
 \bigg(\sum_{j=0}^{k-1} b_{2j} \beta (\gamma \alpha \beta)^j \bigg)
 \bigg(\sum_{l=0}^{k-1} c_{2l} \gamma (\alpha \beta \gamma)^l\bigg)
\\&=
 \sum_{m=0}^{k-2} 
 \bigg(\sum_{i,j,l \geqslant 0; i+j+l = m}c_{2i+1} b_{2j}  c_{2l} \bigg)
 \gamma (\alpha \beta \gamma)^{m+1}
,
\end{align*}
\begin{align*}
C_1 B_0 C_1 &=
 \bigg(\sum_{i=0}^{k-1} c_{2i+1} \gamma \alpha (\beta \gamma \alpha)^i\bigg)
 \bigg(\sum_{j=0}^{k-1} b_{2j} \beta (\gamma \alpha \beta)^j \bigg)
 \bigg(\sum_{i=0}^{k-1} c_{2l+1} \gamma \alpha (\beta \gamma \alpha)^l\bigg)
\\&=
 \sum_{m=0}^{k-2} 
 \bigg(\sum_{i,j,l \geqslant 0; i+j+l = m}c_{2i+1} b_{2j}  c_{2l+1} \bigg)
 \gamma (\alpha \beta \gamma)^{m+1} \alpha
.
\end{align*}
Since
$\gamma \beta \gamma = (\gamma \alpha \beta)^{k-1} \gamma \alpha$
in 
$Q(2\cA)^k(b)$ and $Q(2\cA)^k(c)$,
we obtain the equalities 
\begin{align}
\label{eq:sumBC}
\sum_{i,j,l \geqslant 0; i+j+l = m}c_{2i} b_{2j+1}  c_{2l} 
 + \sum_{i,j,l \geqslant 0; i+j+l = m}c_{2i+1} b_{2j}  c_{2l}
&= 0,
\\
\notag
\sum_{i,j,l \geqslant 0; i+j+l = n}c_{2i} b_{2j+1}  c_{2l+1}
 + \sum_{i,j,l \geqslant 0; i+j+l = n}c_{2i+1} b_{2j}  c_{2l+1}
&= 0,
\end{align}
for $m = 0, \dots, k-2$,  $n = 0, \dots, k-3$,
and
\begin{align}
\label{eq:rel-cbc}
\sum_{i,j,l \geqslant 0; i+j+l = k-2}\!\!\!\!\!\!\!\!c_{2i} b_{2j+1}  c_{2l+1}
 + \sum_{i,j,l \geqslant 0; i+j+l = k-2}\!\!\!\!\!\!\!\!c_{2i+1} b_{2j}  c_{2l+1}
 + a_0^k b_0^{k-1} c_0^k
&= b_0 c_0^2
.
\end{align}
Similarly, we have the equalities
\begin{align*}
\varphi\big( (\beta \gamma \alpha)^{k-1} \beta \gamma \big)
 &= \big(\varphi(\beta) \varphi(\gamma) \varphi(\alpha)\big)^{k-1} 
                \varphi(\beta) \varphi(\gamma) 
 = a_0^{k-1} (b_0 c_0)^k (\beta \gamma \alpha)^{k-1} \beta \gamma
 + (a_0 b_0 c_0)^{k-1} (b_1c_0 + b_0 c_1) (\beta \gamma \alpha)^k
,\\
\varphi\big(b (\beta \gamma \alpha)^k\big)
 &= b \big(\varphi(\beta) \varphi(\gamma) \varphi(\alpha)\big)^{k} 
 = b (a_0 b_0 c_0)^k (\beta \gamma \alpha)^k
,\\
\varphi\big(\alpha^2\big)
 &= \varphi(\alpha)^2 =
  \sum_{0\leqslant i,j \leqslant 3} A_i A_j
,
\end{align*}
where
\begin{align*}
A_0 A_1 &= 
 \bigg(\sum_{i=0}^{k-1} a_{4i} \alpha (\beta \gamma \alpha)^i \bigg)
 \bigg(\sum_{j=0}^{k-1} a_{4j+1} \beta \gamma (\alpha \beta \gamma)^j \bigg)
=
 \sum_{m=0}^{k-1} 
 \bigg(\sum_{i,j \geqslant 0; i+j = m} a_{4i} a_{4j+1} \bigg)
 (\alpha \beta \gamma)^{m+1}
,\\
A_2 A_2 &= 
 \bigg( \sum_{i=0}^{k-1} a_{4i+2} (\alpha \beta \gamma)^{i+1} \bigg)^2
=
 \sum_{m=0}^{k-2} 
 \bigg(\sum_{i,j \geqslant 0; i+j = m} a_{4i+2} a_{4j+2} \bigg)
 (\alpha \beta \gamma)^{m+2}
,
\\
A_1 A_0 &= 
 \bigg(\sum_{i=0}^{k-1} a_{4i+1} \beta \gamma (\alpha \beta \gamma)^i \bigg)
 \bigg(\sum_{j=0}^{k-1} a_{4j} \alpha (\beta \gamma \alpha)^j \bigg)
=
 \sum_{m=0}^{k-1} 
 \bigg(\sum_{i,j \geqslant 0; i+j = m} a_{4i+1} a_{4j} \bigg)
 (\beta \gamma \alpha)^{m+1}
,\\
A_3 A_3 &= 
 \bigg( \sum_{i=0}^{k-2} a_{4i+3} (\beta \gamma \alpha)^{i+1} \bigg)^2
 =
 \sum_{m=0}^{k-2} 
 \bigg(\sum_{i,j \geqslant 0; i+j = m} a_{4i+3} a_{4j+3} \bigg)
 (\beta \gamma \alpha)^{m+2}
,\\
A_0 A_3 &= 
 \bigg(\sum_{i=0}^{k-1} a_{4i} \alpha (\beta \gamma \alpha)^i \bigg)
 \bigg(\sum_{j=0}^{k-1} a_{4j+3} (\beta \gamma \alpha)^{j+1} \bigg)
=
 \sum_{m=0}^{k-2} 
 \bigg(\sum_{i,j \geqslant 0; i+j = m} a_{4i} a_{4j+3} \bigg)
 (\alpha \beta \gamma)^{m+1} \alpha
,\\
A_2 A_0 &= 
 \bigg( \sum_{i=0}^{k-1} a_{4i+2} (\alpha \beta \gamma)^{i+1} \bigg)
 \bigg(\sum_{j=0}^{k-1} a_{4j} \alpha (\beta \gamma \alpha)^j \bigg)
 =
 \sum_{m=0}^{k-2} 
 \bigg(\sum_{i,j \geqslant 0; i+j = m} a_{4i+2} a_{4j} \bigg)
 (\alpha \beta \gamma)^{m+1} \alpha
,\\
A_1 A_2 &= 
 \bigg(\sum_{i=0}^{k-1} a_{4i+1} \beta \gamma (\alpha \beta \gamma)^i \bigg)
 \bigg(\sum_{j=0}^{k-1} a_{4j+2}  (\alpha \beta \gamma)^{j+1} \bigg)
=
 \sum_{m=0}^{k-2} 
 \bigg(\sum_{i,j \geqslant 0; i+j = m} a_{4i+1} a_{4j+2} \bigg)
\beta \gamma (\alpha \beta \gamma)^{m+1}
,
\\
A_3 A_1 &= 
 \bigg( \sum_{i=0}^{k-2} a_{4i+3} (\beta \gamma \alpha)^{i+1} \bigg)
 \bigg(\sum_{j=0}^{k-1} a_{4j+1} \beta \gamma (\alpha \beta \gamma)^j \bigg)
 =
 \sum_{m=0}^{k-2} 
 \bigg(\sum_{i,j \geqslant 0; i+j = m} a_{4i+3} a_{4j+1} \bigg)
 \beta \gamma (\alpha \beta \gamma)^{m+1}
.
\end{align*}

Since,
$\alpha^2 = (\beta \gamma \alpha)^{k-1} \beta \gamma 
     + b (\beta \gamma \alpha)^k$
in 
$Q(2\cA)^k(b)$ 
and 
$\alpha^2 = (\beta \gamma \alpha)^{k-1} \beta \gamma 
     + c (\beta \gamma \alpha)^k$
in 
$Q(2\cA)^k(c)$,
we 
have
\begin{align}
\label{eq:rel-aa}
a_0^{k-1} (b_0 c_0)^k
- \sum_{i,j \geqslant 0; i+j = k-2} a_{4i+1} a_{4j+2}
- \sum_{i,j \geqslant 0; i+j = k-2} a_{4i+3} a_{4j+1}
 &= a_0^2
,\\
\label{eq:rel-aa-soc}
  (a_0 b_0 c_0)^{k-1} (b_1c_0 + b_0 c_1) 
 + b (a_0 b_0 c_0)^k
-\sum_{i,j \geqslant 0; i+j = k-1} a_{4i} a_{4j+1} 
\\\notag
- \sum_{i,j \geqslant 0; i+j = k-2} a_{4i+2} a_{4j+2} 
- \sum_{i,j \geqslant 0; i+j = k-1} a_{4i+1} a_{4j} 
- \sum_{i,j \geqslant 0; i+j = k-2} a_{4i+3} a_{4j+3}
 &= c a_0^2
,
\\
\notag
 a_0 a_1 &= 0
,\\
\notag
\sum_{i,j \geqslant 0; i+j = n+1} a_{4i} a_{4j+1} 
+ \sum_{i,j \geqslant 0; i+j = n} a_{4i+2} a_{4j+2}
&= 0
,
\end{align}
\begin{align}
\notag
\sum_{i,j \geqslant 0; i+j = n+1} a_{4i+1} a_{4j} 
 +\sum_{i,j \geqslant 0; i+j = n} a_{4i+3} a_{4j+3}
 &= 0
,
\\
\label{eq:sumA}
\sum_{i,j \geqslant 0; i+j = m} a_{4i} a_{4j+3}
+ \sum_{i,j \geqslant 0; i+j = m} a_{4i+2} a_{4j}
&= 0
,\\
\notag
\sum_{i,j \geqslant 0; i+j = m} a_{4i+1} a_{4j+2}
+ \sum_{i,j \geqslant 0; i+j = m} a_{4i+3} a_{4j+1}
&= 0
,
\end{align}
for $m = 0, \dots, k-2$,  $n = 0, \dots, k-3$.

We prove now inductively that
\begin{align}
\label{eq:a23}
 a_{4i+2} + a_{4i+3} &= 0, \mbox{ for } i = 0, \dots, k-2 .
\end{align}
Indeed, recall that $a_0 \neq 0$ from assumption.
Then from \eqref{eq:sumA} with $m = 0$ we have
$a_0 a_2 + a_0 a_3 = 0$, and hence $a_2 + a_3  = 0$.
Let $m \in \{1,\dots, k-2\}$ and 
assume that  $a_{4i+2} + a_{4i+3} = 0$ for $i < m$.
Then, applying again \eqref{eq:sumA}, we obtain
\begin{align*}
a_0(a_{4m+2} + a_{4m+3}) 
&= 
\sum_{i=0}^m 
a_{4(m-i)}(a_{4i+2} + a_{4i+3}) 
=
\sum_{i,j \geqslant 0; i+j = m} a_{4i} a_{4j+3}
+ \sum_{i,j \geqslant 0; i+j = m} a_{4i+2} a_{4j}
=0
.
\end{align*}
Then we conclude that $a_{4m+2} + a_{4m+3} = 0$ 
(because $a_0 \neq 0$), so it proves \eqref{eq:a23}.

Dually we show the equalities
\begin{align}
\label{eq:bc}
\sum_{i,j \geqslant 0; i+j = m}c_{2i} b_{2j+1}  
 + \sum_{i,j \geqslant 0; i+j = m}c_{2i+1} b_{2j} 
&= 0,
\mbox{ for } m = 0, \dots, k-2 .
\end{align}
We obtain from \eqref{eq:sumBC} with $m = 0$ the equality
\[
(c_{0} b_{1} + c_1 b_0) c_0 = 0 ,
\]
so, since $c_0 \neq 0$, we obtain the required equality
\[
c_{0} b_{1} + c_1 b_0 = 0 . 
\]
Let $m \in \{1,\dots, k-2\}$ and 
assume that  
$\sum_{i,j \geqslant 0; i+j = l}c_{2i} b_{2j+1}  
+ \sum_{i,j \geqslant 0; i+j = l}c_{2i+1} b_{2j} 
= 0$
for $l < m$.
Then, applying it to \eqref{eq:sumBC}, we obtain
\begin{align*}
\bigg(\sum_{i,j \geqslant 0; i+j = m}c_{2i} b_{2j+1}  
 + \sum_{i,j \geqslant 0; i+j = m}c_{2i+1} b_{2j} \bigg) c_0
&= 
\sum_{l=0}^{m}
\bigg(\sum_{i,j \geqslant 0; i+j = m-l}c_{2i} b_{2j+1}  
 + \sum_{i,j \geqslant 0; i+j = m-l}c_{2i+1} b_{2j} \bigg) c_{2l}
\\&= 
\sum_{i,j,l \geqslant 0; i+j+l = m}c_{2i} b_{2j+1}  c_{2l} 
 + \sum_{i,j,l \geqslant 0; i+j+l = m}c_{2i+1} b_{2j}  c_{2l}
=0
.
\end{align*}
Therefore, we obtain \eqref{eq:bc} because $c_0 \neq 0$.

Further, applying \eqref{eq:bc} to \eqref{eq:rel-cbc}
we obtain
\begin{align}
\label{eq:rel-cbc-}
a_0^k b_0^{k-1} c_0^k
&= b_0 c_0^2
.
\end{align}
Moreover, applying \eqref{eq:a23} to \eqref{eq:rel-aa}
we obtain
\begin{align}
\label{eq:rel-aa-}
a_0^{k-1} (b_0 c_0)^k
 &= a_0^2
 .
\end{align}
Furthermore, applying \eqref{eq:a23} and \eqref{eq:bc} to \eqref{eq:rel-aa-soc}
we obtain
\begin{align}
\label{eq:rel-aa--}
b (a_0 b_0 c_0)^k 
=
b (a_0 b_0 c_0)^k
- 2 \sum_{i,j \geqslant 0; i+j = k-1} a_{4i} a_{4j+1} 
 &= c a_0^2 .
\end{align}
We note that \eqref{eq:rel-cbc-} is equivalent to
\begin{align}
\label{eq:rel-unity}
a_0^{k} (b_0 c_0)^{k-2}
 &= 1_K ,
\end{align}
and
from \eqref{eq:rel-cbc-} and \eqref{eq:rel-aa-} it follows that
\begin{gather}
\label{eq:rel-a-bc}
 a_0^3 =  (a_0 b_0 c_0)^k = (b_0 c_0)^2 .
\end{gather}
Applying \eqref{eq:rel-a-bc} to \eqref{eq:rel-unity}
we obtain
\begin{align}
\label{eq:rel-unity-}
a_0^{5k-6} 
 &= a_0^{2k} \big(a_0^3\big)^{k-2}
 = \Big(a_0^{k} (b_0 c_0)^{k-2}\Big)^2
 = 1_K 
 .
\end{align}
Further,
applying \eqref{eq:rel-a-bc} to \eqref{eq:rel-aa--} 
we obtain
\begin{align}
\label{eq:rel-aa---}
c &=
  b a_0  
  - 2 a_0^{-2} \sum_{i,j \geqslant 0; i+j = k-1} a_{4i} a_{4j+1} 
 = b a_0
 .
\end{align}
Finally,
we conclude 
from \eqref{eq:rel-unity-} and \eqref{eq:rel-aa---}
that
$b^{5k-6} = (b a_0)^{5k-6} = c^{5k-6}$.
\end{proof}

\section{Algebras $Q(2\cB)_3^t(a,b)$}
\label{sec:9}

We classify now the socle equivalences
of algebras of quaternion type $Q(2\cB)_3$
(introduced in \cite{E}).
Results of this section will be applied
in the proof of Theorem~\ref{th:main3}
in Section~\ref{sec:10}
in the case of $\Lambda$ being
an algebra with two simple modules.

Let $(Q,f)$ be the triangulation quiver
\[
  \xymatrix{
    1
    \ar@(ld,ul)^{\alpha}[]
    \ar@<.5ex>[r]^{\beta}
    & 2
    \ar@<.5ex>[l]^{\gamma}
    \ar@(ru,dr)^{\eta}[]
  } 
\]
with $f$-orbits $(\alpha)$ and $(\beta\ \eta\ \gamma)$.
Then we have the $g$-orbits 
$\cO(\alpha) = (\alpha\ \beta\ \gamma)$
and
$\cO(\eta) = (\eta)$,
and
$\partial(Q,f) = \{ 1 \}$.

For a natural number $t \geqslant 3$, $a \in K^*$, $b \in K$,
we denote by $Q(2\cB)_3^t(a,b)$ the algebra given by 
the quiver $Q$
and the relations:
\begin{align*}
&&
\alpha \beta &= \beta \eta, &
\eta \gamma &= \gamma \alpha, &
 \alpha^2 &= \beta \gamma + b \alpha^3, &
\gamma \beta &= a \eta^{t-1} , 
&&
\\
&&
\alpha^4 &= 0, &
\eta^{t+1} &= 0 , &
\gamma \alpha^2 &= 0, &
\alpha^2 \beta &= 0 
&&
\end{align*}
(compare \cite[Theorem~VII.7.3(ii)]{E}).

\begin{proposition}
\label{prop:9.1}
Let $t \geqslant 3$ be a natural number and 
$\Lambda = \Lambda(Q,f,m_{\bullet},c_{\bullet},b_{\bullet})$
be a socle deformed weighted triangulation algebra
of the above triangulation quiver $(Q,f)$
with 
$m_{\cO(\alpha)} = 1$ 
and
$m_{\cO(\eta)} = t$. 
Then $\Lambda$ is isomorphic to an algebra
$Q(2\cB)_3^t(a,b)$
with
$a \in K \setminus \{ 0,1 \}$
for 
$t = 3$, and $a = 1$ for $t \geqslant 4$.
Moreover, $b = 0$ if and only if $b_{\bullet}$ is zero.
\end{proposition}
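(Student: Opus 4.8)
The strategy is to make the defining relations of $\Lambda$ explicit and then rescale the four arrows so as to reach the presentation of $Q(2\cB)_3^t(a,b)$ recalled above. First I would fix the combinatorial data: since $f(\alpha)=\alpha$ we have $\partial(Q,f)=\{1\}$, the $g$-orbits are $\cO(\alpha)=(\alpha\ \beta\ \gamma)$ and $\cO(\eta)=(\eta)$, and for $t\geqslant 3$ no arrow is virtual (as $m_{\cO(\eta)}n_{\cO(\eta)}=t\neq 2$ and $m_{\cO(\alpha)}n_{\cO(\alpha)}=3$), so the Gabriel quiver of $\Lambda$ equals $Q$. Abbreviating $c=c_{\cO(\alpha)}$, $a=c_{\cO(\eta)}$, $d=b_1$ and computing the relevant paths, namely $A_\alpha=\alpha\beta$, $A_\beta=\beta\gamma$, $A_\gamma=\gamma\alpha$, $A_\eta=\eta^{t-1}$ and $B_\beta=\beta\gamma\alpha$, Definition~\ref{def:socdef} (in the corrected form of \cite{ES5-corr}) gives the relations
\begin{align*}
 \alpha^2 &= c\,\beta\gamma + d\,\beta\gamma\alpha, & \beta\eta &= c\,\alpha\beta, & \eta\gamma &= c\,\gamma\alpha, & \gamma\beta &= a\,\eta^{t-1},
\end{align*}
together with the monomial relations $\alpha^2\beta$, $\beta\eta^2$, $\gamma\beta\gamma$, $\eta\gamma\alpha$ coming from part~(3) and $\alpha\beta\eta$, $\beta\gamma\beta$, $\gamma\alpha^2$, $\eta^2\gamma$ coming from part~(4); none of the exceptional clauses of (3)--(4) applies since there are no virtual arrows.

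Next I would do some bookkeeping in $\Lambda$. From $\eta\gamma=c\gamma\alpha$ and $\alpha\beta\eta=0$ one gets $\alpha\beta\gamma\alpha=c^{-1}\alpha\beta\eta\gamma=0$; multiplying the first relation by $\alpha$ on the left and on the right and using $\beta\gamma\alpha^2=\beta(\gamma\alpha^2)=0$ one obtains $\alpha^3=c\,\alpha\beta\gamma=c\,\beta\gamma\alpha$, so the first relation may be rewritten as $\alpha^2=c\,\beta\gamma+(d/c)\,\alpha^3$, and one checks $\alpha^4=0$ and $\eta^{t+1}=0$. Conversely, all eight of the above monomial relations hold in $Q(2\cB)_3^t(a,b)$. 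Now apply the substitution $\alpha\mapsto p\alpha$, $\beta\mapsto q\beta$, $\gamma\mapsto r\gamma$, $\eta\mapsto s\eta$ with $p=sc$ and $qr=p^2c$: then $\beta\eta=c\alpha\beta$ and $\eta\gamma=c\gamma\alpha$ become $\alpha\beta=\beta\eta$ and $\eta\gamma=\gamma\alpha$; the relation $\alpha^2=c\beta\gamma+(d/c)\alpha^3$ becomes $\alpha^2=\beta\gamma+b\,\alpha^3$ with $b=d/(cp)$; and $\gamma\beta=a\eta^{t-1}$ becomes $\gamma\beta=a'\,\eta^{t-1}$ with $a'=p^2c\,a/s^{t-1}=a\,c^{t}\,p^{3-t}$. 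Since the two families of relations now generate the same ideal of $KQ$, the induced surjection $Q(2\cB)_3^t(a',b)\to\Lambda$ is an isomorphism because both algebras have dimension $t+9$ (Proposition~\ref{prop:2.3}(i), and the spanning set $e_1,\alpha,\alpha^2,\alpha^3,\beta,\alpha\beta,\gamma,\gamma\alpha,e_2,\eta,\dots,\eta^{t}$ of $Q(2\cB)_3^t(a',b)$). As $cp\neq0$, we have $b=0$ if and only if $d=b_1=0$, i.e.\ if and only if $b_\bullet=0$, which gives the last assertion.

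It remains to control $a'$. The only free parameter is $p\in K^*$. If $t\geqslant 4$, then $t-3\geqslant 1$ and, $K$ being algebraically closed, I may pick $p$ with $p^{t-3}=a\,c^{t}$, so that $a'=1$, as required. If $t=3$, then $a'=a\,c^{3}$ is independent of $p$; it is nonzero since $c_\bullet$ takes values in $K^*$, and it differs from $1$ because, after the standard arrow-rescaling normalization available for this quiver (under which $a\,c^{3}$ becomes the scalar $\lambda$ of Example~3.1 in \cite{ES5}; compare Section~\ref{sec:3}), the standing assumption that $\Lambda$ is not the singular disc algebra $D(1)$ forces $\lambda\neq1$. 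Hence $a'\in K\setminus\{0,1\}$ when $t=3$.

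The step I expect to be the most delicate is the accurate transcription of the defining relations from the corrected Definition~\ref{def:socdef} — in particular checking that none of the exceptional cases of parts~(3)--(4) occurs and identifying which monomial relations are redundant — together with the case $t=3$, where one must recognize $a\,c^{3}$ as the gauge-invariant scalar distinguishing the singular disc algebra in order to apply the standing exclusion of $D(1)$. The rescaling computation itself is routine.
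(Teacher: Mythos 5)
Your proposal is correct and follows essentially the same route as the paper's proof: transcribe the defining relations (which you do accurately, including the check that no exceptional clause of Definition~\ref{def:socdef}(3)--(4) applies), derive $\alpha^3=c\,\alpha\beta\gamma=c\,\beta\gamma\alpha$, rescale the arrows to normalize the scalars (the paper does this in two stages, $u^2=c_1^{-1}$ and then $v^{t-3}=a$ for $t\geqslant 4$, whereas you do it in one step with a free parameter $p$ — a cosmetic difference), and invoke the exclusion of the singular disc algebra $D(1)$ to get $a\in K\setminus\{0,1\}$ when $t=3$. Your surjection-plus-dimension-count justification of the presentation is a slightly more explicit version of the paper's ``direct checking,'' but the substance is the same.
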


\begin{proof}
Let
$c_1 = c_{\cO(\alpha)}$,
$c_2 = c_{\cO(\eta)}$,
and
$d = b_1$.
Since 
$m_{\cO(\alpha)} = 1$, 
it follows from the general assumption that
$\eta$ is not virtual, and hence
$t = m_{\cO(\eta)} \geqslant 3$
(see also \cite[Example~3.1(2)]{ES5}.
The algebra $\Lambda$ is given by the quiver $Q$
and the relations: 
\begin{align*}
&&
\beta \eta &= c_1 \alpha \beta , &
\eta \gamma &= c_1 \gamma \alpha, &
\gamma \beta &= c_2 \eta^{t-1} , &
\alpha^2 &= c_1 \beta \gamma + b_1 \beta \gamma \alpha, 
&&
\\
&&
\alpha^2 \beta &= 0, &
\beta \eta^2 &= 0, &
\eta \gamma \alpha &= 0, &
\gamma \beta \gamma &= 0, 
&&\\&&
\alpha \beta \eta &= 0, &
\beta \gamma \beta &= 0, &
\eta^2 \gamma &= 0, &
\gamma \alpha^2 &= 0. 
&&
\end{align*}
Moreover, we have 
\begin{align*}
 \alpha^3 &= c_1\beta \gamma \alpha + b_1 \beta \gamma \alpha^2
   =  c_1\beta \gamma \alpha , 
\qquad\mbox{and hence}
\\
 \alpha^2 &= c_1 \beta \gamma + c_1^{-1} b_1 \alpha^3. 
\end{align*}
A direct checking shows that $\Lambda$ is given by the quiver $Q$,
the four commutativity relations, and the four zero relations
\begin{align*}
&&
\alpha^4 &= 0, &
\eta^{t+1} &= 0 , &
\alpha^2 \beta &= 0, &
\gamma \alpha^2 &= 0. 
&&
\end{align*}
Let $u$ be an element in $K$ with 
$u^2 = c_1^{-1}$,
and
$\alpha^* = u \alpha$,
$\eta^* = u^3 \eta$.
Then we have the equalities
\begin{align*}
&&
\alpha^* \beta &= \beta \eta^* , &
\eta^* \gamma &= \gamma \alpha^*, &
\gamma \beta &= c_2 u^{-3(t-1)} (\eta^*)^{t-1} , &
\big(\alpha^*\big)^2 &=  \beta \gamma + u^4 b_1 \alpha^3, 
&&
\\
&&
\big(\alpha^*\big)^4 &= 0, &
\big(\eta^*\big)^{t+1} &= 0 , &
\big(\alpha^*\big)^2 \beta &= 0, & 
\gamma \big(\alpha^*\big)^2 &= 0. 
&&
\end{align*}
Therefore, $\Lambda$ is isomorphic to the algebra
$Q(2\cB)_3^t(a,b)$
with
$a = c_2 u^{-3(t-1)}\in K^*$
and
$b = u^4 b_1\in K$.
We note that $b = 0$ if and only if $b_1 = 0$.

Assume now that $t \geqslant 4$.
We take $v \in K$ such that
$v^{t-3} = a$,
and 
$\alpha' = v \alpha^*$,
$\beta' = v \beta$,
$\gamma' = v \gamma$,
$\eta' = v \eta^*$.
Then we obtain the equalities
\begin{align*}
&&
\alpha' \beta' &= \beta' \eta', &
\eta' \gamma' &= \gamma' \alpha', &
\gamma' \beta' &= (\eta')^{t-1} , &
 (\alpha')^2 &= \beta' \gamma' + v^{-1} b (\alpha')^3, 
&&
\\
&&
(\alpha')^4 &= 0, &
(\eta')^{t+1} &= 0 , &
(\alpha')^2 \beta' &= 0, &
\gamma' (\alpha')^2 &= 0. 
&&
\end{align*}
Therefore, $\Lambda$ is isomorphic to 
$Q(2\cB)_3^t(1,b')$
with $b' = v^{-1} b$.

Finally, assume that $t = 3$.
Then $\Lambda' = Q(2\cB)_3^t(a,0)$
is the disc algebra $D(a)$ considered in
\cite[Example~3.1(1)]{ES5},
and consequently a non-singular disc algebra,
by our general assumption.
Hence $a \in K \setminus \{0,1\}$.
\end{proof}

For $t \geqslant 4$ and $b \in K$, we abbreviate
$Q(2\cB)_3^t(b) = Q(2\cB)_3^t(1,b)$.
If $K$ is of characteristic different from $2$,
then it follows from Proposition~\ref{prop:2.3}
that $Q(2\cB)_3^t(b)$ is isomorphic to
$Q(2\cB)_3^t(0)$.
Otherwise, we have the following surprising fact.

\begin{proposition}
\label{prop:9.2}
Let $K$ be of characteristic $2$, 
and $t \geqslant 4$.
\begin{enumerate}[\rm (i)]
 \item
  If $t$ is even, then 
  $Q(2\cB)_3^t(b)$ is isomorphic to $Q(2\cB)_3^t(0)$
  for any $b \in K$.
 \item
  If $t$ is odd and $b,c\in K$, then 
  the algebras $Q(2\cB)_3^t(b)$ and $Q(2\cB)_3^t(c)$
  are isomorphic if and only if $b^{t-3} = c^{t-3}$.
\end{enumerate}
\end{proposition}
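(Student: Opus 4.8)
The plan is to follow the scheme of the proof of Proposition~\ref{prop:8.2}. Throughout $K$ has characteristic $2$ and $t \geq 4$; write $A = Q(2\cB)_3^t(b)$ and $B = Q(2\cB)_3^t(c)$. First I record the structural facts I will use, all available from Proposition~\ref{prop:9.1} and \cite[Theorem~VII.7.3]{E}: each such algebra has the quiver $Q$ above, $e_1 A e_1$ has $K$-basis $e_1,\alpha,\alpha^2,\alpha^3$ (with $\beta\gamma = \alpha^2 + b\alpha^3$ and $\alpha^4=0$), $e_2 A e_2$ has basis $e_2,\eta,\dots,\eta^t$ (with $\gamma\beta = \eta^{t-1}$ and $\eta^{t+1}=0$), and $e_1 A e_2 = \langle \beta, \alpha\beta\rangle$, $e_2 A e_1 = \langle \gamma, \eta\gamma\rangle$; in particular $\dim_K A = t+9$. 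Since the loops $\alpha$ and $\eta$ are nilpotent of indices $4$ and $t+1 \neq 4$, the two primitive idempotents cannot be interchanged, so any $K$-algebra isomorphism $\varphi : A \to B$ may, after composing with an inner automorphism, be assumed to satisfy $\varphi(e_i)=e_i$.

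For the ``if'' directions I exhibit explicit isomorphisms. When $t$ is even, I claim the assignment
\[
  \alpha \mapsto \alpha,\qquad \beta\mapsto\beta,\qquad \gamma\mapsto \gamma+b\,\eta\gamma,\qquad \eta\mapsto \eta+b\,\eta^2
\]
defines an isomorphism $Q(2\cB)_3^t(b)\to Q(2\cB)_3^t(0)$. One verifies the four commutativity relations and the four zero relations directly; the only delicate point is $\gamma\beta=\eta^{t-1}$, whose image forces the identity $b\,\eta^t=(t-1)b\,\eta^t$ in $Q(2\cB)_3^t(0)$, which holds precisely because $t$ is even, while in the relation $\alpha^2=\beta\gamma+b\alpha^3$ the term $b\,\eta\gamma$ contributes an extra $b\alpha^3$ that cancels the $b\alpha^3$ already present (characteristic $2$). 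Since each generator is sent to itself modulo $\rad^2$, the map is bijective, which proves (i). When $t$ is odd and $b,c\in K^*$ with $b^{t-3}=c^{t-3}$, put $p = cb^{-1}$, so that $p^{t-3}=1$, and define $\alpha\mapsto p\alpha$, $\eta\mapsto p\eta$, $\beta\mapsto\beta$, $\gamma\mapsto p^2\gamma$; the defining relations are then satisfied exactly because $c=bp$ and $p^{t-3}=1$, giving an isomorphism $Q(2\cB)_3^t(b)\to Q(2\cB)_3^t(c)$. If $b=0$ then $c^{t-3}=0$ forces $c=0$ and there is nothing to prove.

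For the ``only if'' part of (ii) I take an isomorphism $\varphi : A \to B$ with $\varphi(e_i)=e_i$ and, using the basis above, write
\[
 \varphi(\alpha) = a_1\alpha + a_2\alpha^2 + a_3\alpha^3,\quad
 \varphi(\eta) = s_1\eta + \dots + s_t\eta^t,\quad
 \varphi(\beta) = p_1\beta + p_2\alpha\beta,\quad
 \varphi(\gamma) = q_1\gamma + q_2\eta\gamma,
\]
with $a_1,s_1,p_1,q_1\in K^*$. Expanding the relation $\alpha\beta=\beta\eta$ (equivalently $\eta\gamma=\gamma\alpha$) in $B$ gives $a_1=s_1$. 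Expanding $\alpha^2=\beta\gamma+b\alpha^3$ and $\gamma\beta=\eta^{t-1}$ and comparing the coefficients of $\alpha^2$, resp.\ of $\eta^{t-1}$, gives $p_1q_1 = a_1^2$ and $p_1q_1 = s_1^{t-1}$, whence $a_1^{t-3}=1$. Finally, comparing the coefficients of $\alpha^3$ in the first relation and of $\eta^t$ in the second yields $p_1q_2+p_2q_1 = b\,a_1^3 + c\,a_1^2$ and $p_1q_2+p_2q_1 = (t-1)\,a_1^{t-2}s_2$; since $t$ is odd, $(t-1)$ vanishes in $K$, so $b\,a_1^3 = c\,a_1^2$, i.e.\ $c = b\,a_1$, and therefore $c^{t-3} = b^{t-3}a_1^{t-3} = b^{t-3}$.

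The main obstacle is the bookkeeping in this last step: one must expand $\varphi(\gamma)\varphi(\beta)$, $\varphi(\eta)^{t-1}$, $\varphi(\beta)\varphi(\gamma)$, $\varphi(\alpha)^2$ and $\varphi(\alpha)^3$ far enough to read off the coefficients of the socle generators, and check that in characteristic $2$ every unwanted cross term --- in particular all those involving $a_2,a_3,s_3,\dots,p_2,q_2$ --- either vanishes identically (using $\alpha^2\beta=\gamma\alpha^2=0$, $\beta\eta^2=0$, $\eta^2\gamma=0$) or cancels in pairs, so that the clean identities above emerge. The parity of $t$ enters exactly through the coefficient $(t-1)$ of $\eta^t$ in $\varphi(\eta)^{t-1}$, which is why the even and odd cases genuinely differ, and matching the explicit isomorphisms of the second paragraph against this computation confirms that $b^{t-3}=c^{t-3}$ is the precise criterion in the odd case.
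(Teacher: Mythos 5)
Your proof is correct and follows essentially the same route as the paper's: the identical explicit isomorphism in (i) (your $\gamma\mapsto\gamma+b\,\eta\gamma$ is the paper's $\gamma\mapsto\gamma+b\,\gamma\alpha$, since $\eta\gamma=\gamma\alpha$), an explicit rescaling for the ``if'' half of (ii), and for the ``only if'' half the same coefficient comparison giving $a_1=s_1$, $p_1q_1=a_1^2=s_1^{t-1}$, hence $a_1^{t-3}=1$, and $c=b\,a_1$. The only genuine addition is your (correct) justification that the two idempotents cannot be interchanged, a point the paper leaves implicit.
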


\begin{proof}
(i)
Assume $t$ is even and $b \in K$.
Then there is an isomorphism of $K$-algebras
 $h : Q(2\cB)_3^t(b) \to Q(2\cB)_3^t(0)$
given by
$h(\alpha) = \alpha$,
$h(\beta) = \beta$,
$h(\gamma) = \gamma + b \gamma \alpha$,
$h(\eta) = \eta + b \eta^2$.
Indeed, we have the equalities in $Q(2\cB)_3(0)$
\begin{align*}
 h(\alpha \beta) 
&= h(\alpha) h(\beta) 
 = \alpha \beta
 = \beta \eta
 = \beta\big(\eta + b \eta^2\big)
 = h(\beta) h(\eta) 
 = h(\beta \eta) ,
\\
 h(\eta \gamma) 
&= h(\eta) h(\gamma) 
 = \big(\eta + b \eta^2\big) (\gamma + b \gamma \alpha) 
 = \eta \gamma
  = \gamma \alpha
  = (\gamma + b \gamma \alpha)  \alpha
 = h(\gamma) h(\alpha) 
 = h(\gamma \alpha) ,
 \\
 h(\beta \gamma) 
&=h(\beta)  h(\gamma) 
 = \beta (\gamma + b \gamma \alpha) 
 = \beta\gamma+ b \beta  \gamma \alpha 
 = \alpha^2 + b \alpha^3
 = h(\alpha)^2 + b h(\alpha)^3
 = h\big(\alpha^2 + b \alpha^3\big),
 \\
 h(\gamma \beta) 
&= h(\gamma) h(\beta) 
 = (\gamma + b \gamma \alpha)  \beta
 = \gamma \beta+ b \gamma \alpha \beta
 = \eta^{t-1} + b \eta^{t}
 = \eta^{t-1} + (t-1)b \eta^{t}
 = \big(\eta + b \eta^2\big)^{t-1}
 = h(\eta)^{t-1} 
 = h\big(\eta^{t-1}\big)  ,
\end{align*}
because 
$\gamma \alpha \beta = \gamma \beta \eta = \eta^{t-1} \eta = \eta^t$,
$t$ is even, and $K$ is of characteristic $2$.

\smallskip

(ii)
Assume $t$ is odd and $b,c \in K$.

\smallskip

\quad
(1)
Assume that $b^{t-3} = c^{t-3}$.
Then there is an isomorphism of $K$-algebras
$\psi : Q(2\cB)_3^t(b) \to Q(2\cB)_3^t(c)$
 given by 
$\psi(\alpha) = b^{-1} c \alpha$,
$\psi(\beta) = b^{-1} c \beta$,
$\psi(\gamma) = b^{-1} c \gamma$,
$\psi(\eta) = b^{-1} c \eta$.

Observe that the following equalities hold in $Q(2\cB)_3^t(c)$
\begin{align*}
 \psi(\beta \gamma) 
&=\psi(\beta)  \psi(\gamma) 
 = b^{-2} c^2 \beta \gamma
 = b^{-2} c^2 \alpha^2 + b^{-2} c^3  \alpha^3
 = \big(b^{-1} c \alpha\big)^2 + b \big(b^{-1} c \alpha\big)^3
 = \psi(\alpha)^2 + b \psi(\alpha)^3
 = \psi\big(\alpha^2 + b \alpha^3\big),
 \\
 \psi(\gamma \beta) 
&= \psi(\gamma) \psi(\beta) 
 = b^{-2} c^2 \gamma \beta
 = b^{-2} c^2 \eta^{t-1}
 = \big(b^{-1} c \eta\big)^{t-1}
 = \psi(\eta)^{t-1} 
 = \psi\big(\eta^{t-1}\big)  ,
\end{align*}
because 
$\big(b^{-1} c \big)^{t-1} = b^{-2} c^2  b^{-(t-3)} c^{t-3} = b^{-2} c^2$.

\smallskip

\quad
(2)
Assume that there exists a $K$-algebra isomorphism 
$\varphi : Q(2\cB)_3^t(b) \to Q(2\cB)_3^t(c)$.
Then $\varphi$ is given by 
\begin{align*}
&&
 \varphi(\alpha) &= a_0 \alpha + a_1 \alpha^2  + a_2 \alpha^3 ,
 &
 \varphi(\beta) &= b_0 \beta + b_1 \alpha \beta ,
 &
 \varphi(\gamma) &= c_0 \gamma + c_1 \gamma \alpha ,
 &
 \varphi(\eta) &= \sum_{i=0}^{t-1} d_i \eta^{i+1} ,
&&
\end{align*}
for some 
$a_0,b_0,c_0,d_0 \in K^*$, 
$a_1,a_2,b_1,c_1,d_i \in K$, for 
$1 \leqslant i \leqslant t-1$.
We have the equalities in $Q(2\cB)_3^t(c)$
\begin{align*}
0 &= \varphi(0) = \varphi (\alpha \beta - \beta \eta) 
  =  \varphi (\alpha)  \varphi (\beta) - \varphi (\beta) \varphi (\eta)
 = 
  a_0 b_0 \alpha \beta -  b_0 d_0 \beta \eta
 =
  (a_0 - d_0) b_0 \alpha \beta ,
\\
0 &= \varphi(0) = \varphi \big(\beta \gamma - \alpha^2 - b \alpha^3\big) 
  =  \varphi (\beta)  \varphi (\gamma) - \varphi (\alpha)^2 - b \varphi (\alpha)^3
 \\&
 =
   b_0 c_0 \beta \gamma
  - a_0^2 \alpha^2
  - \big(2 a_0 a_1 + b a_0^3 - (b_0 c_1 + b_1 c_0) \big) \alpha^3
 =
   (b_0 c_0 - a_0^2) \alpha^2
  + \big(b_0 c_0 c + (b_0 c_1 + b_1 c_0) - b a_0^3 \big) \alpha^3
,
\\
0 &= \varphi(0) = \varphi \big(\gamma \beta - \eta^{t-1}\big) 
  =  \varphi (\gamma) \varphi (\beta) 
   - \varphi (\eta)^{t-1}
 =
   b_0 c_0 \gamma \beta
  - d_0^{t-1} \eta^{t-1}
  - \big((t-1)d_0^{t-2} d_1 - (b_0 c_1 + b_1 c_0)\big) \eta^{t}
 \\&
 =
   \big(b_0 c_0 - d_0^{t-1}\big) \eta^{t-1}
  + (b_0 c_1 + b_1 c_0) \eta^{t}
,
\end{align*}
because 
$t$ is odd and $K$ is of characteristic $2$.
Hence we conclude that
\begin{align*}
&&
a_0 &= d_0,
&
a_0^2 &= b_0 c_0 = d_0^{t-1},
&
b_0 c_1 + b_1 c_0 &= 0,
&
b_0 c_0 c &= b a_0^3.
&&
\end{align*}
In particular, we have  
$d_0^{t-3} = d_0^{-2} b_0 c_0 = a_0^{-2} b_0 c_0 = 1$
and
$b^{-1} c = (b_0 c_0)^{-1} a_0^3 = a_0^{-2} a_0^{3} = a_0 = d_0$.
Therefore, $b^{t-3} = c^{t-3}$.
\end{proof}

The following proposition follows from the proof of 
\cite[Lemma~5.4]{BS}.

\begin{proposition}
\label{prop:9.3}
Let $K$ be of characteristic $2$, 
and $a \in K \setminus \{0,1\}$.
Then the following statements hold.
\begin{enumerate}[\rm (i)]
 \item
  For any $b \in K^*$,
  the algebras $Q(2\cB)_3^3(a,b)$ and $Q(2\cB)_3^3(a,1)$
  are isomorphic. 
 \item
  The algebras $Q(2\cB)_3^3(a,1)$ and $Q(2\cB)_3^3(a,0)$
  are not isomorphic.
\end{enumerate}
\end{proposition}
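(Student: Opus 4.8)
The plan is to prove (i) by exhibiting an explicit rescaling isomorphism and (ii) by a coefficient comparison in characteristic $2$, following the pattern of the proof of \cite[Lemma~5.4]{BS} (and of Propositions~\ref{prop:8.2} and~\ref{prop:9.2} above).

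For (i), fix $b \in K^*$ and consider the $K$-linear map $h : Q(2\cB)_3^3(a,b) \to Q(2\cB)_3^3(a,1)$ determined on generators by $h(\alpha) = b^{-1}\alpha$, $h(\beta) = b^{-1}\beta$, $h(\gamma) = b^{-1}\gamma$, $h(\eta) = b^{-1}\eta$. First I would check that $h$ respects the defining relations: the relations $\alpha\beta = \beta\eta$, $\eta\gamma = \gamma\alpha$ and $\gamma\beta = a\eta^{2}$ are preserved because both sides are scaled by the same power of $b^{-1}$, while for $\alpha^2 = \beta\gamma + b\alpha^3$ one computes $h(\alpha)^2 = b^{-2}\alpha^2$ and $h(\beta)h(\gamma) + b\,h(\alpha)^3 = b^{-2}\beta\gamma + b^{-2}\alpha^3 = b^{-2}(\beta\gamma + \alpha^3) = b^{-2}\alpha^2$ in the target, using $\alpha^2 = \beta\gamma + \alpha^3$ there. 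The four zero relations $\alpha^4 = 0$, $\eta^4 = 0$, $\gamma\alpha^2 = 0$, $\alpha^2\beta = 0$ are preserved by any rescaling. Since $b^{-1}$ is a unit, $h$ is bijective, hence an isomorphism.

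For (ii), suppose toward a contradiction that $\varphi : Q(2\cB)_3^3(a,0) \to B := Q(2\cB)_3^3(a,1)$ is an isomorphism. After composing with an inner automorphism of $B$, I would assume that $\varphi$ permutes the primitive idempotents $e_1, e_2$, and I will treat the case $\varphi(e_i) = e_i$, the case in which $\varphi$ swaps the vertices being entirely analogous (with the roles of $\alpha$ and $\eta$, and of $\beta$ and $\gamma$, interchanged, since the argument only uses the two source relations $\alpha^2 = \beta\gamma$ and $\gamma\beta = a\eta^2$). By Proposition~\ref{prop:9.1} the algebra $B$ is $12$-dimensional with basis $e_1, \alpha, \alpha^2, \alpha^3, \beta, \alpha\beta, \gamma, \gamma\alpha, e_2, \eta, \eta^2, \eta^3$, so I may write $\varphi(\alpha) = a_0\alpha + a_1\alpha^2 + a_2\alpha^3$, $\varphi(\beta) = b_0\beta + b_1\alpha\beta$, $\varphi(\gamma) = c_0\gamma + c_1\gamma\alpha$, $\varphi(\eta) = d_0\eta + d_1\eta^2 + d_2\eta^3$ with $a_0, b_0, c_0, d_0 \in K^*$. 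Applying $\varphi$ to $\alpha^2 = \beta\gamma$ and using $\charact K = 2$, $\beta\gamma = \alpha^2 + \alpha^3$, $\beta\gamma\alpha = \alpha\beta\gamma = \alpha^3$ and $\alpha\beta\gamma\alpha = 0$ in $B$, one gets $a_0^2\alpha^2 = b_0c_0\alpha^2 + (b_0c_0 + b_0c_1 + b_1c_0)\alpha^3$, so comparing coefficients of $\alpha^3$ yields $b_0c_0 = b_0c_1 + b_1c_0$. Applying $\varphi$ to $\gamma\beta = a\eta^2$ and using $\gamma\beta = a\eta^2$, $\gamma\alpha\beta = a\eta^3$ and $\gamma\alpha^2\beta = 0$ in $B$, one gets $ad_0^2\eta^2 = ab_0c_0\eta^2 + a(b_0c_1 + b_1c_0)\eta^3$, so (as $a \neq 0$) the coefficient of $\eta^3$ yields $b_0c_1 + b_1c_0 = 0$. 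Combining the two equalities gives $b_0c_0 = 0$, contradicting $b_0, c_0 \in K^*$.

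The only genuinely delicate point is the reduction to the case where $\varphi$ fixes (or at worst swaps) the idempotents, together with the observation that the vertex-swapped case produces the same contradiction because the two source relations used play symmetric structural roles after the swap; once the bases of the spaces $e_iBe_j$ are pinned down the computations are short, with the characteristic $2$ hypothesis used to kill the cross terms in $\varphi(\alpha)^2$ and $\varphi(\eta)^2$ and to rewrite $-\alpha^3$ as the $\alpha^3$ coming from the socle deformation of the target.
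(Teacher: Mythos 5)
Your proof is correct, but note that the paper does not actually prove Proposition~\ref{prop:9.3}: it only states that the result ``follows from the proof of \cite[Lemma~5.4]{BS}'' and gives no argument, so your write-up supplies a self-contained proof where the paper merely cites. Your method is exactly the one the paper uses for the neighbouring results (Propositions~\ref{prop:8.2} and~\ref{prop:9.2}): part (i) by an explicit rescaling of the arrows, checked against the single relation involving the deformation parameter (the key cancellation being $b\,(b^{-1}\alpha)^3=b^{-2}\alpha^3$, so the target relation $\alpha^2=\beta\gamma+\alpha^3$ is what is needed); part (ii) by reducing, via Proposition~\ref{prop:10.1}, to an isomorphism permuting the idempotents and comparing coefficients of $\alpha^3$ and $\eta^3$ in the images of the two relations $\alpha^2=\beta\gamma$ and $\gamma\beta=a\eta^2$, with characteristic $2$ killing the cross terms in $\varphi(\alpha)^2$ and $\varphi(\eta)^2$. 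I verified the computations: in $B=Q(2\cB)_3^3(a,1)$ one has $\beta\gamma=\alpha^2+\alpha^3$, $\alpha\beta\gamma=\beta\gamma\alpha=\alpha^3$, $\alpha\beta\gamma\alpha=0$, $\gamma\alpha\beta=a\eta^3$ and $\gamma\alpha^2\beta=0$, which yield exactly your identities $b_0c_0=b_0c_1+b_1c_0$ and $b_0c_1+b_1c_0=0$, hence the contradiction $b_0c_0=0$; and the vertex-swapping case produces the same pair of identities with the roles of the two relations interchanged, as you assert, so it genuinely is analogous. The only point worth making explicit is that the twelve monomials you list really form a basis of $B$ (the dimension $12=1\cdot 3^2+3\cdot 1^2$ follows from Proposition~\ref{prop:2.3} via Proposition~\ref{prop:9.1}, or from \cite{E}); this justifies both the coefficient comparison in (ii) and the bijectivity of the rescaling in (i). Incidentally, your argument for (ii) uses only $a\neq 0$, not $a\neq 1$.
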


\section{Proof of Theorem~\ref{th:main3}}
\label{sec:10}

We present first the following known general result
(see \cite[Theorem~2.9]{Ku})
with its elementary proof, for convenience of the reader.

\begin{proposition}
\label{prop:10.1}
Let $A$ and $B$ be isomorphic basic algebras,
and $e_1,\dots,e_n$ and $f_1,\dots,f_n$
complete sets of orthogonal primitive idempotents  
of $A$ and $B$, respectively.
Then there exist a $K$-algebra isomorphism $\varphi : A \to B$
and a permutation $\sigma$ of $\{1,\dots,n\}$ such that 
$\varphi(e_i) = f_{\sigma(i)}$ for any $i \in \{1,\dots,n\}$.
\end{proposition}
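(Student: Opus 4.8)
The plan is to start from an arbitrary $K$-algebra isomorphism $\psi : A \to B$ and to correct it by an inner automorphism of $B$. Writing $e_i' := \psi(e_i)$, the elements $e_1',\dots,e_n'$ form a complete set of orthogonal primitive idempotents of $B$ (being the image of such a set under an algebra isomorphism), so that $B = \bigoplus_{i=1}^n B e_i' = \bigoplus_{j=1}^n B f_j$ are two decompositions of the left regular module ${}_B B$ into indecomposable submodules; indecomposability of each summand is exactly primitivity of the corresponding idempotent. By the Krull--Schmidt theorem for finite-dimensional modules over $B$, there exist a permutation $\sigma$ of $\{1,\dots,n\}$ and isomorphisms $\theta_i : B e_i' \to B f_{\sigma(i)}$ of left $B$-modules. (Since $A$ and $B$ are basic, the $B f_j$ are pairwise non-isomorphic and $\sigma$ is actually unique, but only its existence will be used.)

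Next I would assemble the $\theta_i$ into a single automorphism $\Theta := \bigoplus_{i=1}^n \theta_i$ of the left module ${}_B B$. Every $B$-linear endomorphism $f$ of ${}_B B$ satisfies $f(x) = x f(1)$, so $\End({}_B B) \cong B^{\op}$ acting by right multiplication, and the invertible ones correspond to units of $B$; hence there is a unit $u \in B$, namely $u = \Theta(1)$ with $u^{-1} = \Theta^{-1}(1)$, such that $\Theta(x) = x u$ for all $x \in B$. Applying $\Theta$ to the $i$-th summand of the first decomposition gives
\[
  (B e_i')\, u \;=\; \Theta(B e_i') \;=\; B f_{\sigma(i)} \qquad (1 \leqslant i \leqslant n).
\]

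Finally I would extract the conjugating element. Put $g_i := u^{-1} e_i' u$; these are orthogonal primitive idempotents of $B$ with $\sum_{i=1}^n g_i = 1$, and, since $B u^{-1} = B$, we get $B g_i = B(u^{-1} e_i' u) = (B e_i') u$, which by the display equals $B f_{\sigma(i)}$ as a submodule of ${}_B B$. Therefore $\bigoplus_i B g_i = \bigoplus_i B f_{\sigma(i)}$ is one and the same direct sum decomposition of ${}_B B$, and for each $i$ both right multiplication by $g_i$ and right multiplication by $f_{\sigma(i)}$ are the (unique) $B$-linear projection of ${}_B B$ onto the summand $B f_{\sigma(i)}$ along the complementary summands; hence they agree, and evaluating at $1$ yields $g_i = f_{\sigma(i)}$, i.e. $u^{-1}\psi(e_i) u = f_{\sigma(i)}$ for every $i$. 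Then $\varphi(b) := u^{-1}\psi(b) u$ is a $K$-algebra isomorphism $A \to B$, being $\psi$ followed by the inner automorphism $b \mapsto u^{-1} b u$ of $B$, and $\varphi(e_i) = f_{\sigma(i)}$ by construction. The one delicate point --- this being otherwise a folklore fact --- is precisely this last step: one must promote the module isomorphisms furnished by Krull--Schmidt to an actual unit of $B$ and then verify that the resulting idempotents are literally equal, not merely isomorphic, which is where the identification $\End({}_B B) = B$ and the uniqueness of the projection attached to a direct sum decomposition enter.
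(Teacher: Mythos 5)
Your proof is correct and follows essentially the same route as the paper: start from an arbitrary isomorphism $\psi$, match the two idempotent decompositions of the regular module via Krull--Schmidt, assemble the resulting module isomorphisms into a unit $u$ of $B$, and compose $\psi$ with the corresponding inner automorphism. The only (immaterial) differences are that you work with left rather than right modules and extract $u$ from the identification $\End({}_B B)\cong B^{\op}$, whereas the paper sums explicit elements $u_i\in f_{\sigma(i)}B\psi(e_i)$ implementing the component isomorphisms; likewise your final step via uniqueness of the projections attached to a direct sum decomposition is equivalent to the paper's direct computation $u\psi(e_i)u^{-1}=u_iv_i=f_{\sigma(i)}$.
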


\begin{proof}
Let $\psi : A \to B$ be a $K$-algebra isomorphism.
Since $\psi(e_1), \dots, \psi(e_n)$ is a complete set of
pairwise orthogonal primitive idempotents  of $B$,
we have two decompositions
\[
  B = \bigoplus_{i=1}^n \psi(e_i) B = \bigoplus_{i=1}^n f_i B
\]
into direct sum of indecomposable projective $B$-modules.
By the Krull-Schmidt Theorem there exists a permutation
$\sigma$ of $\{1,\dots,n\}$ such that 
$\psi(e_i) B = f_{\sigma(i)} B$ in $\mod B$
for any  $i \in \{1,\dots,n\}$.
The isomorphisms are induced by left multiplications
of elements of $B$ by elements 
$u_i \in f_{\sigma(i)} B \psi(e_i)$
and
$v_i \in \psi(e_i) B f_{\sigma(i)}$
satisfying 
$u_i v_i = f_{\sigma(i)}$
and
$v_i u_i = \psi(e_i)$
for any  $i \in \{1,\dots,n\}$.
Then $u = \sum_{i=1}^n u_i$ is invertible 
to $v = \sum_{i=1}^n v_i$ in $B$.
We can define the inner automorphism $\theta$ of $B$
by $\theta(B) = u b u^{-1}$ for all $b \in B$, 
and the composition $\varphi = \theta \psi$.
Then $\varphi : A \to B$ is a $K$-algebra isomorphism
such that $\varphi(e_i) = f_{\sigma(i)}$
for any  $i \in \{1,\dots,n\}$.
\end{proof}

Assume that
$\Lambda = \Lambda(S,\vec{T},m_{\bullet},c_{\bullet},b_{\bullet})$
is a socle deformed weighted surface algebra,
with a non-zero border function $b_{\bullet}$
over an algebraically closed field $K$ of characteristic $2$,
and $\bar{\Lambda}=\Lambda(S, \vec{T}, m_{\bullet}, c_{\bullet})$.
We recall that, by our general  assumption, 
$\bar{\Lambda}$ is not the singular disc algebra $D(1)$.

\smallskip

Let $(Q,f) = (Q(S,\vec{T}),f)$, and $Q = (Q_0,Q_1,s,t)$.
Since the border function $b_{\bullet}$ is non-zero,
there is a border loop $\alpha$ of $(Q,f)$ with 
$b_{s(\alpha)} \neq 0$.
Moreover, we have $\bar{\alpha} = g(\alpha)$
and $n_{\alpha} \geqslant 3$.

\smallskip

(1)
Assume that $|Q_0|\geqslant3$.
We note that then $f(\bar{\alpha})$ is not a loop.
Then the claim follows by
\begin{itemize}
 \item
  Propositions \ref{prop:5.1} and \ref{prop:5.2}, 
  if $m_{\cO(\alpha)} \geqslant 2$;
 \item
  Lemma \ref{lem:6.1} and Proposition \ref{prop:6.2}, 
  if $m_{\cO(\alpha)} = 1$ 
  and $\cO(\alpha) \neq \cO(f(\bar{\alpha}))$;
 \item
  Lemma \ref{lem:7.1} and Proposition \ref{prop:7.2}, 
  if $m_{\cO(\alpha)} = 1$ 
  and $\cO(\alpha) = \cO(f(\bar{\alpha}))$.
\end{itemize}

\smallskip

(2)
Assume that $|Q_0|=2$.
Then $(Q,f)$ is of the form
\[
  \xymatrix{
    1
    \ar@(ld,ul)^{\alpha}[]
    \ar@<.5ex>[r]^{\beta}
    & 2
    \ar@<.5ex>[l]^{\gamma}
    \ar@(ru,dr)^{\eta}[]
  } 
\]
with $f$-orbits 
$(\alpha)$, $(\beta\ \eta\ \gamma)$.
Then we have the $g$-orbits 
$\cO(\alpha) = (\alpha\ \beta\ \gamma)$
and
$\cO(\eta) = (\eta)$,
and
$\partial(Q,f) = \{ 1 \}$.
We note that $\bar{\alpha} = \beta$
and $\eta = f(\bar{\alpha})$.
Assume first that $m_{\cO(\alpha)} \geqslant 2$.
If $\eta$ is not virtual 
the $\Lambda$ is not isomorphic 
to a weighted surface algebra, 
by Propositions \ref{prop:5.1} and \ref{prop:5.2}.
Further, if $\eta$ is virtual and
$k = m_{\cO(\alpha)} \geqslant 2$,
then applying 
Proposition \ref{prop:8.1}
(and its proof) we conclude that $\Lambda$
is isomorphic to $Q(2\cA)^k(b)$
with $b \in K^*$.
Then by Proposition \ref{prop:8.2}, 
$\Lambda$ is not isomorphic to a weighted surface algebra. 
Finally if $m_{\mathcal{O}(\alpha)}=1$, 
then $\Lambda$ is isomorphic to a weighted surface algebra, 
this follows from 
Propositions \ref{prop:9.1}, \ref{prop:9.2} and \ref{prop:9.3}.

\medskip

\end{document}